\newcommand{\email}[1]{\href{mailto:#1}{#1}}
\pgfplotsset{every axis/.append style={
                    label style={font=\Large},
                    tick label style={font=\Large},
                    legend style={font=\Large}
                    }}
\patchcmd{\ttlh@hang}{\parindent\z@}{\parindent\z@\leavevmode}{}{}
\patchcmd{\ttlh@hang}{\noindent}{}{}{}
\pgfplotsset{compat=1.15}
\newcommand{\logLogSlopeTriangle}[5]
{
    \pgfplotsextra
    {
        \pgfkeysgetvalue{/pgfplots/xmin}{\xmin}
        \pgfkeysgetvalue{/pgfplots/xmax}{\xmax}
        \pgfkeysgetvalue{/pgfplots/ymin}{\ymin}
        \pgfkeysgetvalue{/pgfplots/ymax}{\ymax}

        \pgfmathsetmacro{\xArel}{#1}
        \pgfmathsetmacro{\yArel}{#3}
        \pgfmathsetmacro{\xBrel}{#1-#2}
        \pgfmathsetmacro{\yBrel}{\yArel}
        \pgfmathsetmacro{\xCrel}{\xArel}

        \pgfmathsetmacro{\lnxB}{\xmin*(1-(#1-#2))+\xmax*(#1-#2)} 
        \pgfmathsetmacro{\lnxA}{\xmin*(1-#1)+\xmax*#1} 
        \pgfmathsetmacro{\lnyA}{\ymin*(1-#3)+\ymax*#3} 
        \pgfmathsetmacro{\lnyC}{\lnyA+#4*(\lnxA-\lnxB)}
        \pgfmathsetmacro{\yCrel}{\lnyC-\ymin)/(\ymax-\ymin)}

        \coordinate (A) at (rel axis cs:\xArel,\yArel);
        \coordinate (B) at (rel axis cs:\xBrel,\yBrel);
        \coordinate (C) at (rel axis cs:\xCrel,\yCrel);

        \draw[#5]   (A)-- node[pos=0.5,anchor=north] {\scriptsize{1}}
                    (B)-- 
                    (C)-- node[pos=0.,anchor=west] {\scriptsize{#4}} 
                    (A);
    }
}
\newtheorem{theorem}{Theorem}
\newtheorem{prop}{Proposition}
\newtheorem{lemma}{Lemma}
\newtheorem{rem}{Remark}
\newtheorem{corollary}{Corollary}
\DeclareMathOperator{\e}{e}
\DeclareMathOperator{\divergence}{div}
\newcommand{\R}{\mathbb{R}}
\newcommand{\A}{\mathbb{A}}
\newcommand{\B}{\mathbb{B}}
\newcommand{\M}{\mathcal{M}}
\newcommand{\E}{\mathcal{E}}
\newcommand{\D}{\mathcal{D}}
\newcommand{\Diss}{\mathbb{D}}
\newcommand{\N}{\mathbb{E}}
\newcommand{\F}{\mathcal{F}}
\renewcommand{\G}{\mathcal{G}}
\newcommand{\Gu}{\underline{\G}}
\newcommand{\s}{\sigma}
\newcommand{\V}{\underline{V}}
\renewcommand{\u}{\underline{u}}
\renewcommand{\v}{\underline{v}}
\newcommand{\w}{\underline{w}}
\newcommand{\ro}{\underline{\rho}}
\newcommand{\logg}{\underline{w}}
\newcommand{\rac}{\underline{\xi}}
\newcommand{\1}{\mathds{1}}
\newcommand{\one}{\underline{1}}
\newcommand{\SG} {Scharfetter--Gummel }
\newcommand{\Mat}{\mathbb{M}}
\newcommand{\etal}{et al.~}
\newcommand{\dd}{\mathrm{d}}
\newcommand{\defi}{\mathrel{\mathop:}=}
\newcommand{\ifed}{=\mathrel{\mathop:}}
\numberwithin{equation}{section}
\title{Long-time behaviour of hybrid finite volume schemes for {advection}-diffusion equations: linear and nonlinear approaches}
\author[1]{Claire Chainais-Hillairet\footnote{\email{claire.chainais@univ-lille.fr}}}
\affil[1]{Univ.~Lille, CNRS, Inria, UMR 8524 - Laboratoire Paul Painlev\'e, F-59000 Lille, France}
\author[2]{Maxime Herda\footnote{\email{maxime.herda@inria.fr}}}
\author[2]{Simon Lemaire\footnote{\email{simon.lemaire@inria.fr}}}
\author[2]{Julien Moatti\footnote{\email{julien.moatti@inria.fr}}}
\affil[2]{Inria, Univ.~Lille, CNRS, UMR 8524 - Laboratoire Paul Painlev\'e, F-59000 Lille, France}
\begin{document}
\maketitle
\begin{abstract}
We are interested in the long-time behaviour of approximate solutions to anisotropic and heterogeneous linear advection-diffusion equations in the framework of hybrid finite volume (HFV) methods on general polygonal/polyhedral meshes.
We consider two linear methods, as well as a new, nonlinear scheme, for which we prove the existence and the positivity of discrete solutions.
We show that the discrete solutions to the three schemes converge exponentially fast in time towards the associated discrete steady-states.
To illustrate our theoretical findings, we present some numerical simulations assessing long-time behaviour and positivity. 
We also compare the accuracy of the schemes on some numerical tests in the stationary case.
\end{abstract}

{\small {\bf Keywords:} Finite volume schemes, general meshes, anisotropic advection-diffusion equations, long-time behaviour, entropy method.}

{\small {\bf MSC2020:} 65M08, 35K51, 35Q84, 35B40}

\tableofcontents
\section{Introduction}

We are interested in the numerical approximation of linear advection-diffusion equations on bounded domains.
These equations constitute the main building block in the modelling of more complex problems stemming from physics (e.g., porous media flows \cite{Bear:88}, or corrosion models \cite{BBCHD:10}), biology, or electronics (semi-conductor devices modelling \cite{VanRo:50}).
Thus, designing reliable numerical schemes to approximate their solutions is a pre-requisite before discretising more complex models. 
Our aim here is the preservation of some key physical properties of these equations at the discrete level, on a large variety of meshes.

Let $\Omega$ be an open, bounded, connected polytopal subset of $\R^d$, $d \in \{2,3\}$, with boundary $\partial\Omega$ divided into two disjoint open subsets $\Gamma^D$ and $\Gamma^N$, in such a way that $\partial \Omega = \overline{\Gamma^D} \cup \overline{\Gamma^N}$.
We consider the following problem: Find $u : \R_+ \times \Omega \to \R$ solution to
\begin{equation} \label{evol:mixed:ino}
	\left\{
	\begin{split}
		\partial _ t u - \divergence ( \Lambda  (\nabla u + u \nabla \phi )  ) &= f &&\text{ in } \R_+ \times \Omega, \\
		u &=  {g^D}&&\text{ on }  \R_+ \times \Gamma^D, \\
		\Lambda (\nabla u + u \nabla \phi ) \cdot n &= {g^N} &&\text{ on } \R_+ \times \Gamma^N,\\		
	 	u(0,\cdot)&= u^{in} &&\text{ in } \Omega ,
	\end{split}
	\right.
\end{equation}
where $n$ is the unit normal vector to $\partial\Omega$ pointing outward $\Omega$, and the data satisfy:
\begin{itemize}
	\item $\Lambda \in L^\infty(\Omega;\R^{d\times d})$ is a symmetric and uniformly elliptic diffusion tensor: there exist {$\lambda_\flat,\lambda_\sharp$} with $0 < \lambda_\flat \leq \lambda_\sharp<\infty$ such that,
for a.e.~$x$ in $\Omega$, $\xi \cdot \Lambda (x) \xi\geq \lambda_\flat|\xi|^2$ and {$|\Lambda(x)\xi| \leq  \lambda_\sharp |\xi|$} for all $\xi \in \R^d$; 
	\item $\phi \in C^1(\overline{\Omega})$ is a regular potential {from which derives the advection field $V^{\phi}\defi -\Lambda\nabla\phi$, assumed to satisfy $V^\phi\in H({\rm div};\Omega)$};
	\item $f \in L^2(\Omega)$ is a source term;
	\item $g^D \in H^{\frac{1}{2}}(\Gamma^D)$ is a Dirichlet datum, assumed to be the trace on $\Gamma^D$ of $u^D\in H^1(\Omega)$ satisfying $\|u^D\|_{H^1(\Omega)}\leq C \|g^D\|_{H^{\nicefrac12}(\Gamma^D)}$ for a given $C>0$;
        \item $g^N \in L^2(\Gamma^N)$ is a Neumann datum; 
	\item $u^{in} \in L^2(\Omega)$ is an initial datum.
\end{itemize}
When $|\Gamma^D|=0$, we assume that the compatibility condition $\int_{\Omega}f+\int_{\partial\Omega}g^N=0$ holds true{, and we denote by $M$ the initial mass such that  $M=\int_{\Omega}u^{in}$, which is known to be preserved along time: $\int_{\Omega}u(t)=M$ for almost every $t>0$.} For further use, {we also let in that case} $u^M\defi\frac{M}{|\Omega|}\in\R$, and we refer to this quantity as the mass lifting.
 Advection-diffusion models of the form \eqref{evol:mixed:ino} enjoy certain structural
 properties. First, when the data $f$, $g^D$, $g^N$, and $u^{in}$ are positive, then the solution $u$ is also positive. Second, the asymptotics $t \to \infty$, the so-called long-time behaviour of the solutions, is well understood (see \cite{BLMVi:14,CarTo:98,CarTo:00,Tosca:99} for related models). Indeed, the solution $u$ to \eqref{evol:mixed:ino} converges exponentially fast when $t \to \infty$ towards the steady-state $u^\infty$, solution to the stationary problem 
\begin{equation}  \label{sta:mixed:ino}
	\left\{
	\begin{split}
		- \divergence \left ( \Lambda ( \nabla u^\infty + u ^\infty \nabla \phi ) \right ) &= f &&\text{ in } \Omega, \\
		 u^\infty  &= g^D&&\text{ on } \Gamma^D, \\
		 \Lambda ( \nabla u ^\infty + u^\infty  \nabla \phi )  \cdot n &= g^N &&\text{ on } \Gamma^N, 
	\end{split}
	\right.
\end{equation}
with additional constraint $\int_{\Omega}u^\infty=M$ when $|\Gamma^D|=0$. The question of the long-time behaviour has been widely studied in the context of many-particle systems, for which the second law of thermodynamics ensures a relaxation of the transient phenomena towards an equilibrium. From a mathematical point of view, this evolution is strongly related to the dissipation of an entropy functional. Such a vision based on entropy dissipation has given birth to the so-called entropy method. As highlighted by Arnold \etal in \cite{ACDDJ:04}, the successful use of the entropy method in kinetic theory paves the way to extended applications on various dissipative systems. We refer the reader to the book \cite{Junge:16} of J\"ungel for a presentation of some of these applications.
In \cite{BLMVi:14}, Bodineau \etal proposed an entropy functional adapted to drift-diffusion equations with non-homogeneous Dirichlet boundary conditions. A direct adaptation of their method allows to conclude in the present case on the exponential convergence in time of the solution to Problem~\eqref{evol:mixed:ino} towards the solution to Problem~\eqref{sta:mixed:ino}.

Under appropriate assumptions on the data (a sufficient condition, also valid for more general advection fields, is to assume that $\divergence V^\phi\geq 0$ a.e.~in $\Omega$ and $V^\phi\cdot n\leq 0$ a.e.~on $\Gamma^N$), the stationary Problem \eqref{sta:mixed:ino} is coercive and its well-posedness is straightforward. It turns out that, even if such assumptions on the data are not fulfilled, for an advection field of the form $V^{\phi}= -\Lambda\nabla\phi$, the problem is still coercive in the new unknown $\rho^\infty = u^\infty\,\e^{\phi}$, so that one  can conclude on well-posedness by solving the problem in the new unknown. Concerning the evolution Problem~\eqref{evol:mixed:ino}, the same arguments show the existence and uniqueness of a global weak solution. For general advection fields (not necessarily deriving from a potential), we refer the reader to the results of Droniou \cite{Droni:02} (for mixed Dirichlet-Neumann boundary conditions), and Droniou and V\'azquez \cite{DrVaz:09} (for pure Neumann boundary conditions) for detailed statements about well-posedness and regularity of the solutions.

When it comes to numerical approximation, the accuracy of the method is not the only important feature.
In some applications (e.g., in subsurface modelling, where the mesh often results from seismic analysis), the mesh must be taken as a datum of the problem, and the numerical method needs to be adapted so as to handle potentially fairly general meshes. In some other applications (e.g., power plant simulation), the preservation of the positivity of the solutions (or better, of the monotonicity properties of the equation) is an important quality criterion. In yet some other applications (e.g., nuclear waste repository management), finally, the reliability of the simulations in very large time proves to be crucial for sustainability purposes.
The positivity and long-time behaviour of discrete solutions have been closely studied in the context of standard two-point flux approximation (TPFA) finite volume schemes, for isotropic diffusion (i.e., $\Lambda = \lambda\,I_d$ with $\lambda : \Omega \to \R^\star_+$) on orthogonal meshes.
In \cite{FiHer:17}, Filbet and Herda studied the long-time behaviour of a TPFA scheme for nonlinear boundary-driven Fokker--Planck equations, adapting to the discrete setting the arguments of \cite{BLMVi:14}.
In \cite{CHHer:20}, Chainais-Hillairet and Herda proved on a variety of models that a whole family of TPFA schemes (the so-called $B$-schemes) preserves the exponential decay towards discrete steady-states.
{The results of~\cite{FiHer:17} and~\cite{CHHer:20} are valid for general advection fields, and a choice of data $|\Gamma^D|>0$, $f=0$, $g^D>0$, $g^N=0$, and $u^{in}\geq 0$.} We also refer to \cite{LiLiu:18,ChCoo:70,I:69,BuDel:10} for related schemes and similar issues.
However, these TPFA schemes suffer from an intrinsic limitation: the mesh needs to be $\Lambda$-orthogonal, which, in practice, restricts their use to isotropic diffusion tensors and (standard) orthogonal meshes.
In order to overcome this limitation, several linear finite volume methods using auxiliary unknowns have been designed (cf.~\cite{Droni:14} for a presentation of some of these schemes).
As highlighted by Droniou in \cite{Droni:14}, these methods however suffer from a lack of monotonicity, and so do not preserve the positivity of discrete solutions. 
As a possible remedy, Canc\`es and Guichard introduced in \cite{CaGui:17} (see also the seminal paper~\cite{CaGui:16}), for a class of models encompassing \eqref{evol:mixed:ino} for pure Neumann boundary conditions and a choice of data $f=0$, $g^N = 0$, and $u^{in}\geq 0$ with $M>0$, a nonlinear vertex approximate gradient (VAG) scheme, designed so as to preserve at the discrete level the positivity of the solutions and the entropy structure of the models, for arbitrary anisotropic diffusions and general meshes. Following the same ideas, Canc\`es \etal devised and analysed in \cite{CCHKr:18} a (nonlinear) positivity-preserving discrete duality finite volume (DDFV) scheme, whose discrete entropy structure and long-time behaviour were fully studied in \cite{CCHHK:20}, based on the adaptation to the discrete setting of nonlinear functional inequalities. The DDFV scheme at hand is however limited to the two-dimensional case, and its adaptation to a three-dimensional framework seems difficult (cf.~\cite{Droni:14}).
Let us also mention the work~\cite{SAEF:17} (and the references therein), in which a general framework for the convergence analysis of positivity-preserving nonlinear cell-centred finite volume methods on general meshes is introduced.
On another level, it is known that, given adequate assumptions hold on the data, the solutions to Problem~\eqref{evol:mixed:ino} are regular in space (at least locally). This suggests that the use of high-order methods shall be an interesting track in order to increase the accuracy at fixed computational cost.
Recently introduced by Di Pietro \etal in \cite{DPELe:14}, hybrid high-order (HHO) methods can be seen as an arbitrary-order generalisation of hybrid finite volume (HFV) schemes, that were introduced by Eymard \etal in \cite{EGaHe:10} as yet another way to overcome the limitations of TPFA schemes. HFV methods hinge on cell and face unknowns (whence the vocable hybrid), and as such benefit from a unified 2D/3D formulation. {HFV methods have also been bridged to the larger family of hybrid mimetic mixed (HMM) methods in \cite{DrEGH:10}.}
In view of the above elements, the study of HFV methods appears to be a natural first step in order to design structure-preserving high-order (HHO) schemes for Problem~\eqref{evol:mixed:ino}, that shall both increase the accuracy at fixed computational burden, and preserve the key properties (positivity and long-time behaviour) of the model at hand.  

In this article, we study three different HFV schemes for Problem~\eqref{evol:mixed:ino}. The first one is the {HFV variant of the HMM family of schemes} introduced and analysed in the stationary setting by Beir\~ao da Veiga \etal in \cite{BdVDM:11,Droni:10} {(note that an arbitrary-order (HHO) generalisation of this scheme has been proposed in~\cite{DPDEr:15})}. It is a linear scheme, based on a discretisation of the diffusive and advective fluxes, that is well-posed under a coercivity condition.
The second scheme is also a linear one. Its construction is based on exponential fitting, and takes inspiration from ideas in \cite{BMaPi:89} {(it also shares some features with the works~\cite{LBLMa:16,Madio:16} and~\cite{FiHer:17}, which cover general advection fields)}. This scheme is unconditionally coercive.
These two linear schemes are not expected to preserve positivity, which motivates the introduction of the third method.
For pure Neumann boundary conditions, and a choice of data $f=0$, $g^N = 0$, and $u^{in}\geq 0$ with $M>0$ {(see Appendix~\ref{Ap:NLmixted} for the case of mixed Dirichlet-Neumann boundary conditions)}, we introduce a nonlinear HFV scheme, that is devised along the lines of the nonlinear VAG and DDFV schemes of \cite{CaGui:17} and \cite{CCHKr:18,CCHHK:20}, so as to guarantee the positivity of discrete solutions. Our first result, stated in Theorem~\ref{th:existencenonlin}, is the existence of (positive) solutions to this nonlinear scheme.
In a second time, we investigate the long-time behaviour of the three schemes at hand.
We establish in Theorems~\ref{Th:hmm},~\ref{Th:omega}, and~\ref{Th:nonlin} the exponential decay in time of their discrete solutions towards the associated discrete steady-states.
We numerically validate our theoretical findings on a set of test-cases and, for completeness, we also compare the accuracy of the three schemes on stationary problems.

The article is organised as follows. In Section~\ref{HFV}, we introduce the HFV framework (mesh, discrete unknowns and discrete operators) on a steady variable diffusion problem.
In Section~\ref{description}, we introduce the three schemes for the transient advection-diffusion problem, and we discuss their well-posedness.
In Section~\ref{Time}, we study the long-time behaviour of the three schemes, and prove exponential decay to equilibrium.
In Section~\ref{Numerical}, we discuss the implementation of the schemes, and provide a numerical validation of our theoretical results, as well as a comparison of the stationary schemes in terms of accuracy. Appendices~\ref{Appendix_funcinequalities}{, \ref{Ap:NLmixted}, and} \ref{Appendix_technicalresults} finally collect some functional inequalities and the proofs of {supplementary and} auxiliary results.


\section{Hybrid finite volume discretisation of a variable diffusion problem} \label{HFV}

The aim of this section is to recall  the HFV framework on a steady variable diffusion problem, which corresponds to \eqref{sta:mixed:ino} without advection term ($V^\phi=0$). For a detailed presentation of the method, we refer to \cite{EGaHe:10}.


\subsection{Mesh and discrete unknowns} \label{sec:discre}

The definitions and notation we adopt for the discretisation are essentially the same as in \cite{EGaHe:10}.
A discretisation of the (open, bounded) polytopal set $\Omega \subset \R ^d $, $d\in\{2,3\}$, is defined as a triplet $\mathcal{D}\defi( \M, \E, \mathcal{P})$, where:
\begin{itemize}
		\item $\M$ (the mesh) is a partition of $\Omega$, i.e., a finite family of nonempty disjoint (open, connected) {polytopal} subsets $K$ of $\Omega$ (the mesh {cells}) such that (i) for all $K \in \M$, $|K|>0$, and (ii) $\overline{\Omega} = \bigcup _ {K \in \M } \overline{K} $.
	
		\item $\E$ (the set of faces) is a partition of the mesh skeleton $\bigcup_{K\in\M}\partial K$, i.e., a finite family of nonempty disjoint (open, connected) subsets $\s$ of $\overline{\Omega}$ (the mesh faces, or mesh edges if $d = 2$) such that (i) for all $\s\in\E$, $|\s|>0$ and there exists $\mathcal{H}_\s$ affine hyperplane of $\R^d$ such that $\s\subset\mathcal{H}_\s$, and (ii) $\bigcup_{K\in\M}\partial K=\bigcup_{\s\in\E}\overline{\s}$. We assume that, for all $K \in \M$, there exists $\E_K \subset \E$ (the set of faces of $K$) such that $\partial K = \bigcup_{\s \in \E_K} \overline{\s}$. For $\s \in \E$, we let $\M_\s \defi \lbrace K \in  \M \mid \s \in \E_K \rbrace $ be the set of cells whose $\s$ is a face. Then, for all $\s \in \E$, either $\M_\s=\{K\}$ for a cell $K\in\M$, in which case $\s$ is a boundary face ($\s \subset \partial \Omega$) and we note $\s \in \E_{ext}$, or $\M_\s = \lbrace K,L \rbrace$ for two cells $K,L\in\M$, in which case $\s$ is an interface and we note $\s = K|L \in \E_{int} $.
		
		\item $\mathcal{P}$ (the set of cell centres) is a finite family $\{x_K\}_{K \in \M}$ of points of $\Omega$ such that, for all $K \in \M$, (i) $x_K \in K$, and (ii) $K$ is star-shaped with respect to $x_K$. Moreover, we assume that the Euclidean (orthogonal) distance $d_{K,\s}$ between $x_K$ and the affine hyperplane $\mathcal{H}_\s$ containing $\s$ is positive (equivalently, the cell $K$ is strictly star-shaped with respect to $x_K$).
\end{itemize}
For a given discretisation $\D$, we denote by $h_\D>0$ the size of the discretisation (the meshsize), defined by $h_\D \defi\underset{K \in \M}{\sup} h_K $ where, for all $K \in \M$, $h_K\defi\underset{x,y \in \overline{K}}{\sup} |x-y|$ is the diameter of the cell $K$.
For all $\s\in\E$, we let ${\overline{x}_\s} \in \s$ be the barycentre of $\s$.
Finally, for all $K\in\M$, and all $\s \in \E_K$, we let $n_{K,\s} \in \R^d$ be the unit normal vector to $\s$ pointing outward $K$, and $P_{K, \s}$ be the (open) pyramid of base $\s$ and apex $x_K$ (notice that, when $d=2$, $P_{K,\s}$ is always a triangle). Since $|\s|$ and $d_{K,\s}$ are positive, we have $|P_{K,\s}|=\frac{|\s|d_{K,\s}}{d}>0$.
We depict on Figure \ref{Maillage} an example of discretisation. Notice that the mesh cells are not assumed to be convex, neither $x_K$ is assumed to be the barycentre of $K\in\M$. {Notice that hanging nodes are seamlessly handled with our assumptions, so that meshes with non-conforming cells are allowed (see the orange cross in Figure \ref{Maillage}; the cell $K$ therein is treated as an hexagon).}
\begin{figure}[h!]
\begin{center}
\def\svgwidth{0.8\textwidth}
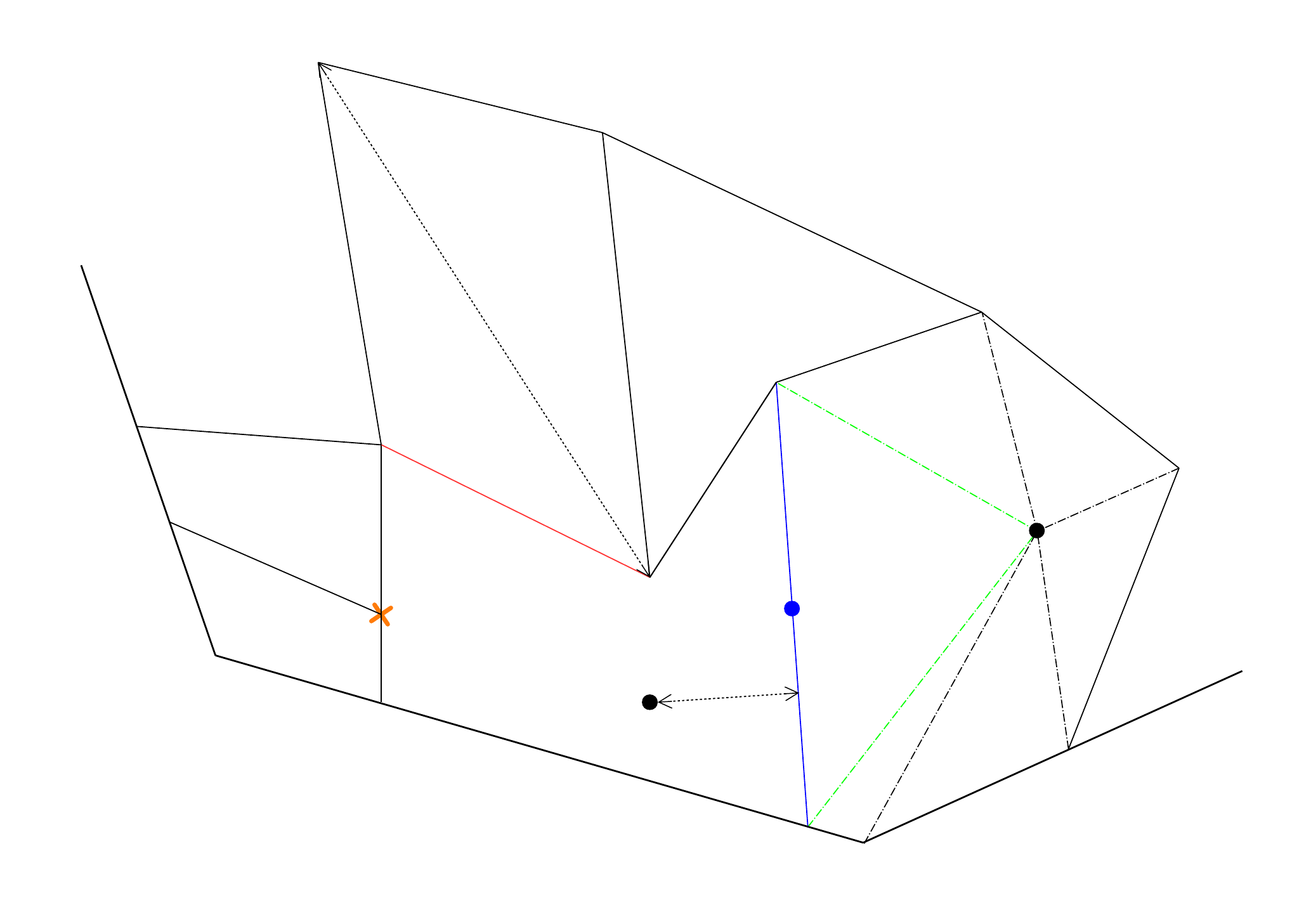
\end{center}
\caption{Two-dimensional discretisation and corresponding notation.}
\label{Maillage}
\end{figure}
We consider the following measure of regularity for the discretisation ({which is slightly stronger than the ones advocated in~\cite[Eq.~(4.1)]{EGaHe:10} or in~\cite[Eq.~(7.8)-(7.9)]{DEGGH:18}}): 
\begin{equation} \label{def:regmesh}
  \theta_\D \defi \max \left (  \underset{K \in \M, \s \in \E_K}{\max} \frac{h_K}{d_{K,\s}} , \underset{\s \in \E, K \in \M_\s}{\max} \frac{h_K^{d-1}}{|\s|} \right ).
\end{equation}
Notice that $\theta_\D \geq 1$, and that for all $K \in \M$, 
\[
	h_K^d \geq |K| = \sum_{\s \in \E_K} |P_{K,\s}| = \sum_{\s \in \E_K} \frac{|\s|d_{K,\s}}{d} 
	\geq \sum_{\s \in \E_K} \frac{h_K^d}{d \theta_\D^2}
	= \frac{|\E_K|}{d \theta_\D^2} h_K^d.
\]
Thus, the number of faces of any mesh cell is uniformly bounded: 
\begin{equation}\label{def:boundfaces}
	\forall K \in \M,\qquad  |\E_K|\leq d \theta_\D^2.
\end{equation}
Also, it is an easy matter to verify that $\underset{\s = K | L \in \E_{int}}{\max}\!\!\!\max\left( \frac{d_{K,\s}}{d_{L,\s}},\frac{d_{L,\s}}{d_{K,\s}}\right)\leq\theta_{\D}^{\frac{d}{d-1}}$.
{Given $\mathcal{F}$ a family of discretisations, we say that $\mathcal{F}$ is uniformly regular if there exists $\theta \geq 1$ such that for all $\D \in \mathcal{F}$, $\theta_\D \leq \theta$.}

We now introduce the set of (hybrid, cell- and face-based) discrete unknowns: 
\begin{equation*}
		\V _\D  \defi \big\lbrace \v_{\D} =\big( (v_K )_{K \in \M } ,  (v_\s)_{\s \in \E} \big) : v_K \in \R\;\forall K\in\M, v_\s \in \R\;\forall\s\in\E \big\rbrace.
\end{equation*}
{Given a mesh cell $K\in\M$, we let $\V_K\defi\R\times\R^{|\E_K|}$ be the restriction of $\V_{\D}$ to $K$, and $\v_K=\big(v_K,(v_{\s})_{\s\in\E_K}\big)\in\V_K$ be the restriction of a generic element $\v_{\D}\in\V_{\D}$ to $K$.}
Also, for $\v_{\D}\in\V_{\D}$, we let $v_\M : \Omega \to \R$ and $v_\E : \bigcup _{K\in\M} \partial K \to \R$ be the piecewise constant functions such that
$$v_{\M\mid K} = v_K \text{ for all } K\in\M, \quad\text{ and }  \quad v_{\E\mid\s} = v_\s \text{ for all } \s \in \E.$$
In what follows, for any set $X\subset\overline{\Omega}$, we denote by $(\cdot,\cdot)_X$ the inner product in $L^2(X;\R^l)$, for $l\in\{1;d\}$.
In particular, we have
$\displaystyle( w_\M,  v_\M)_{\Omega} = \sum_{K \in \M } |K| w_K v_K$ and $\displaystyle( w_\E,  v_\E )_{\partial \Omega} =  \sum_{\s \in \E_{ext} } |\s| w_\s v_\s$.
For further use, we let $\one_{\D}$ denote the element of $\V_{\D}$ with all coordinates equal to $1$.
Also, given a function $f: \R \to \R$, and with a slight abuse in notation, we denote by $f(\v_{\D})$ the element of $\V_{\D}$ whose coordinates are the $(f(v_K))_{K\in \M}$ and the $(f(v_\s))_{\s \in \E}$.
Finally, we let the product {$\w_\D\times\v_{\D}$} denote the element of $\V_{\D}$ whose $i$-th coordinate is the product of the $i$-th coordinates of $\w_{\D}$ and $\v_{\D}$.

When considering mixed Dirichlet-Neumann boundary conditions, we assume that the discretisation $\D$ is compliant with the partition $\partial \Omega = \overline{\Gamma^D} \cup \overline{\Gamma ^N}$ of the boundary of the domain, in the sense that the set $\E_{ext}$ can be split into two (necessarily disjoint) subsets $\E_{ext}^D\defi\left \{ \s \in \E_{ext} \mid \s \subset \Gamma^D \right \}$ and $\E_{ext}^N\defi\left \{ \s \in \E_{ext} \mid \s \subset \Gamma^N \right \}$ such that $\E_{ext} = \E_{ext}^D \cup \E_{ext}^N$. Notice that as soon as $|\Gamma^D|>0$, $|\E_{ext}^D|\geq 1$. We define the following subspace of $\V_{\D}$, enforcing strongly a homogeneous Dirichlet boundary condition on $\Gamma^D$:
$$\V^D_{\D,0} \defi \left \lbrace \v_{\D} \in \V_{\D} : v_\s = 0 \;\forall \s \in \E_{ext}^D\right \rbrace.$$
In view of the upcoming analysis, we define a discrete counterpart of the $H^1$ seminorm. Locally to any cell $K\in\M$, we let, for any $\v_K\in\V_K$, $|\v_K|_{1,K}^2\defi\sum _{\s \in \E _K} \frac{|\s|}{d_{K,\s}}(v_K - v_\s)^2$. At the global level, for any $\v_{\D} \in \V_\D$, we let
\begin{equation*}
|\v_{\D}|_{1, \D} \defi \sqrt{\sum_ {K \in \M} |\v_K|_{1,K}^2 }.
\end{equation*}
Notice that $|\cdot|_{1,\D}$ does not define a norm on $\V_\D$, but if $|\v_{\D}|_{1, \D}= 0$, then there is $c \in \R$ such that $\v_{\D} = c \,\one_{\D}$ ($\v_{\D}$ is constant).
Thus, $|\cdot|_{1,\D}$ defines a norm on the space $\V^D_{\D,0}$ as soon as $|\Gamma^D| > 0$, as well as on the space of zero-mass vectors
$$\V_{\D,0}^N \defi \left \lbrace \v_{\D} \in \V_{\D} : \int_{\Omega}v_{\M} = 0 \right \rbrace.$$
{For further use, and to allow for a seamless treatment of pure Neumann boundary conditions, we introduce the notation $\V_{\D,0}$, to denote either $\V_{\D,0}^N$ whenever $|\Gamma^D|=0$, or $\V_{\D,0}^D$ otherwise.} 

\subsection{Discrete problem}

The HFV method hinges on the definition of a discrete gradient operator $\nabla_{\D}$, that maps any element $\v_{\D}\in\V_{\D}$ to a piecewise constant $\R^d$-valued function on the pyramidal submesh of $\M$ formed by all the $P_{K,\s}$'s, for $K\in\M$ and $\s\in\E_K$. More precisely, for all $K\in\M$, and all $\s\in\E_K$,
\begin{equation*}
  {{\nabla_{\D}\v_{\D}}_{\mid K}\defi\nabla_K\v_K\quad\text{with}\quad{\nabla_K \v_K}_{\mid P_{K,\s}} \defi \nabla_{K,\s} \v_K= G_K \v_K + S_{K,\s} \v_K\in\R^d},
\end{equation*}
where $G_K\v_K$ is the consistent part of the gradient given by
$$G_K\v_K=\frac{1}{|K|}\sum_{\s'\in\E_K}|\s'|(v_{\s'}-v_K)n_{K,\s'}=\frac{1}{|K|}\sum_{{\s'}\in\E_K}|\s'|v_{\s'}n_{K,\s'},$$
and $S_{K,\s}\v_K$ is a stabilisation given, for some free parameter $\eta>0$, by
\begin{equation} \label{def:stabilisation}
	S_{K,\s}\v_K=\frac{\eta}{d_{K,\s}}\big(v_\s-v_K-G_K\v_K\cdot(\overline{x}_\s-x_K)\big)n_{K,\s}.
\end{equation}
{\begin{rem}[Choice of $\eta$]\label{rem:stab}
    There are two specific values of the stabilisation parameter $\eta>0$ for which one recovers known numerical schemes from the literature:
    \begin{itemize}
      \item[(i)] for $\eta=\sqrt{d}$, one recovers the original HFV scheme of~\cite{EGaHe:10}, that coincides with the TPFA scheme on super-admissible meshes (see~\cite[Lemma 2.10]{EGaHe:10});
      \item[(ii)] for $\eta=d$, one recovers the Discrete Geometric Approach (DGA) of~\cite{CST:10}, later bridged to the non-conforming finite element setting in~\cite{DPLe:15}.
    \end{itemize}
    The influence of the value of $\eta$ on the numerical results has been investigated in~\cite{BDPE:15} for anisotropic diffusion problems. It is shown that the above two values are appropriate choices (neither under- nor over-penalised).
\end{rem}}

Let us consider the stationary problem~\eqref{sta:mixed:ino}, without advection term ($V^\phi=0$).
Our aim is to write {an} HFV discretisation of this steady variable diffusion problem.
Locally to any cell $K\in\M$, we introduce the discrete bilinear form $a^{\Lambda}_K:\V_K\times\V_K\to\R$ such that, for all $\u_K,\v_K\in\V_K$,
\begin{equation} \label{def:aKL}
  {a^{\Lambda}_K}(\u_K,\v_K)\defi\sum_{\s\in\E_K}|P_{K,\s}|\nabla_{K,\s}\v_K\cdot\Lambda_{K,\s}\nabla_{K,\s}\u_K=(\Lambda\nabla_K \u_K, \nabla_K \v_K)_K,
\end{equation}
where we set {$\Lambda_{K,\s}\defi\frac{1}{|P_{K,\s}|}\int_{P_{K,\s}}\Lambda$}.
At the global level, we let $a^{\Lambda}_{\D}:\V_{\D}\times\V_{\D}\to\R$ be the discrete bilinear form such that, for all $\u_{\D},\v_{\D}\in\V_{\D}$,
\begin{equation*}
  a^{\Lambda}_\D (\u_{\D},\v_{\D})\defi\sum_{K\in\M}a^{\Lambda}_K(\u_K,\v_K)=(\Lambda\nabla_\D \u_{\D},\nabla_\D \v_{\D})_{\Omega}.
\end{equation*}
{The discrete HFV problem then reads: Find $\underline{u}^z_{\D}\in\V_{\D,0}$ such that
\begin{equation} \label{eq:dis.pro}
  a^{\Lambda}_{\D}(\underline{u}^z_{\D},\v_{\D})=( f, v_{\M} )_{\Omega}+( g^N, v_{\E})_{\Gamma^N}-a^{\Lambda}_{\D}(\underline{u}^l_{\D},\v_\D)\qquad\forall\v_{\D}\in\V_{\D,0},
\end{equation}
where $\underline{u}^l_{\D}\in\V_{\D}$ is equal
\begin{itemize}
  \item[(i)] either, when $|\Gamma^D|>0$, to the HFV interpolate $\u_{\D}^D$ of the known lifting $u^D$ of the Dirichlet datum $g^D$ (satisfying $|\underline{u}^D_{\D}|_{1,\D}\leq C_{l,\Gamma^D}\|g^D\|_{H^{\nicefrac12}(\Gamma^D)}$, where $C_{l,\Gamma^D}>0$ only depends on the discretisation $\D$ through $\theta_{\D}$),
  \item[(ii)] or, when $|\Gamma^D|=0$, to $\u_{\D}^M\defi u^M\underline{1}_{\D}$, where we recall that $u^M=\frac{M}{|\Omega|}$ is the mass lifting (remark that $a^{\Lambda}_{\D}(\underline{u}^M_{\D},\v_\D)=0$ for all $\v_{\D}\in\V_\D$),
\end{itemize}
and the approximation of the solution to~\eqref{sta:mixed:ino}, denoted $\u^\infty_{\D}\in\V_{\D}$, is finally defined as 
\begin{equation}\label{def.uinf}
\u^\infty_{\D}\defi\underline{u}^z_{\D}+\underline{u}^l_{\D}.
\end{equation} 
Let us note that the superscript $z$ stands for ``zero", while $l$ stands for ``lifting".}

Problem~\eqref{eq:dis.pro} defines a finite volume method, in the sense that it can be equivalently rewritten under a conservative form, with local mass balance, flux equilibration at interfaces, and boundary conditions.
For all $K\in\M$, and all $\s\in\E_K$, the normal diffusive flux $ - \int_{\s} \Lambda \nabla u^\infty \cdot n_{K,\s}$ is approximated by the following numerical flux:
\begin{equation} \label{diff.flux}
	{F_{K,\s}^{\Lambda} (\u_K)} = \sum_{\s ' \in \E_K } A_K ^{\s \s'} (u_K - u_{\s'} ),
\end{equation}
where the $A_K^{\s\s'}$ are defined by 
\begin{equation} \label{def:flux:A}
		A_K ^{\s\s'} \defi \sum_{\s'' \in \E_K } |P_{K,\s''}|\,y_K ^{\s'' \s } \cdot \Lambda _{K, \s''} y_K^{\s''\s'},
\end{equation} 
and the $y_K^{\s\s'}\in\R^d$ only depend on the geometry of the discretisation $\D$ (see, for example, \cite[Eq.~(2.22)]{EGaHe:10} for an exact definition with $\eta = \sqrt{d}$).
For all $K\in\M$, one can express the local discrete bilinear form $a_K^{\Lambda}$ in terms of the local fluxes $\big(F^{\Lambda}_{K,\s}\big)_{\s\in\E_K}$: for all $\u_K,\v_K\in\V_K$, 
\begin{equation} \label{aKL-flux}
	a_K^{\Lambda}(\u_K, \v_K) = \sum_{\s \in \E_K } F^{\Lambda}_{K,\s}(\u_K)(v_K - v_\s).
\end{equation}
As for the VAG~\cite{CaGui:17} and DDFV~\cite{CCHKr:18,CCHHK:20} schemes, we can also express the local discrete bilinear form in a different way, which will be useful in the sequel: 
\begin{equation} \label{def:a_K}
	a_K^{\Lambda}(\u_K, \v_K) = \delta_K \v_K \cdot \A _K  \delta_K \u_K , 
\end{equation}
where, for all $\v_K\in\V_K$, $\delta_K\v_K\in\R^{|\E_K|}$ is defined by
$${\delta_K \v_K} \defi \left ( v_K - v_\s \right )_{\s \in \E_K },$$
and $\A_K \in \R^{|\E_K|\times|\E_K|}$ is the symmetric (because $\Lambda$ is) positive semi-definite matrix whose entries are the $A_K ^{\s\s'}$, that can actually be proved to be nonsingular (cf.~Lemma~\ref{lemma:cond}).

\subsection{Well-posedness} \label{sse:wp}


As for the continuous case, the well-posedness of HFV methods for diffusion problems relies on a coercivity argument.
Let $K\in\M$, and reason locally. By definition~\eqref{def:aKL} of the local discrete bilinear form $a_K^{\Lambda}$, and from the bounds on the diffusion coefficient, we have $\lambda_\flat \|\nabla_K \v_K\|_{L^2(K;\R^d)}^2 \leq a_K^{\Lambda}(\v_K,\v_K) \leq \lambda_\sharp \|\nabla_K \v_K\|_{L^2(K;\R^d)}^2$ for all $\v_K\in\V_K$.
Furthermore, the following comparison result holds (cf.~\cite[Lemma 13.11, $p=2$]{DEGGH:18} and its proof): there exist {$\alpha_\flat,\alpha_\sharp$} with $0<\alpha_\flat\leq\alpha_\sharp<\infty$, only depending on $\Omega$, $d$, and $\theta_\D$ such that 
$\alpha_\flat |\v_K|_{1,K}^2\leq \|\nabla_K \v_K\|_{L^2(K;\R^d)} ^2\leq  \alpha_\sharp |\v_K|_{1,K}^2$ for all $\v_K\in\V_K$.
Combining both estimates, we infer a local coercivity and boundedness result:
\begin{equation} \label{def:localcoercivity}
	\forall \v_K\in \V_K, \qquad\lambda_\flat\alpha_\flat |\v_K|_{1,K}^2
	\leq a^{\Lambda}_K(\v_K,\v_K)
	\leq  \lambda_\sharp\alpha_\sharp |\v_K|_{1,K}^2.
\end{equation}
Summing over $K \in \M$, we get the following global estimates:
\begin{equation} \label{def:globalcoercivity}
	\forall \v_{\D} \in \V_{\D}, \qquad  \lambda_\flat\alpha_\flat | \v_{\D} |_{1,\D}^2 \leq a^{\Lambda}_\D(\v_{\D},\v_{\D}) \leq \lambda_\sharp \alpha_\sharp | \v_{\D} |_{1,\D}^2.
\end{equation}
The well-posedness of Problem~\eqref{eq:dis.pro}-\eqref{def.uinf} follows.
{
\begin{prop}[Well-posedness] \label{pro:wp}
  There exists a unique solution $\u^\infty_{\D}\in\V_{\D}$ to Problem~\eqref{eq:dis.pro}-\eqref{def.uinf}, which satisfies $|\u^\infty_{\D}|_{1,\D}\leq C\left(\|f\|_{L^2(\Omega)}+\|g^N\|_{L^2(\Gamma^N)}+\|g^D\|_{H^{\nicefrac12}(\Gamma^D)}\right)$, for some $C>0$ depending on the data, and on the discretisation $\D$ only through $\theta_\D$.
\end{prop}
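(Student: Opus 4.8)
The plan is to apply the Lax--Milgram theorem on the finite-dimensional space $\V_{\D,0}$, equipped with $|\cdot|_{1,\D}$. Recall from Section~\ref{sec:discre} that $|\cdot|_{1,\D}$ is a genuine norm on $\V_{\D,0}$, be it $\V^D_{\D,0}$ (case $|\Gamma^D|>0$) or $\V^N_{\D,0}$ (case $|\Gamma^D|=0$); hence $(\V_{\D,0},|\cdot|_{1,\D})$ is a Hilbert space. The global estimate~\eqref{def:globalcoercivity} shows that the symmetric bilinear form $a^{\Lambda}_{\D}$, restricted to $\V_{\D,0}\times\V_{\D,0}$, is both coercive (constant $\lambda_\flat\alpha_\flat$) and bounded (constant $\lambda_\sharp\alpha_\sharp$). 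It therefore only remains to prove that the right-hand side of~\eqref{eq:dis.pro}, seen as a linear form $L$ on $\V_{\D,0}$, is continuous, and then to extract the a priori bound by a standard testing argument.

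Continuity of $L(\v_{\D})\defi(f,v_{\M})_{\Omega}+(g^N,v_{\E})_{\Gamma^N}-a^{\Lambda}_{\D}(\underline{u}^l_{\D},\v_{\D})$ is where the functional-analytic work concentrates, and I would bound its three contributions separately. For the source term, Cauchy--Schwarz gives $|(f,v_{\M})_{\Omega}|\leq\|f\|_{L^2(\Omega)}\|v_{\M}\|_{L^2(\Omega)}$, and I would control $\|v_{\M}\|_{L^2(\Omega)}\lesssim|\v_{\D}|_{1,\D}$ through a discrete Poincaré inequality on $\V_{\D,0}$ (in the pure Neumann case, a discrete Poincaré--Wirtinger inequality for zero-mass vectors), with constant depending on $\Omega$, $d$, and $\theta_{\D}$ only. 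For the Neumann datum, Cauchy--Schwarz on $\Gamma^N$ followed by a discrete trace inequality yields $|(g^N,v_{\E})_{\Gamma^N}|\lesssim\|g^N\|_{L^2(\Gamma^N)}|\v_{\D}|_{1,\D}$. Finally, the lifting term is handled by the Cauchy--Schwarz inequality for the positive semi-definite symmetric form $a^{\Lambda}_{\D}$ together with the upper bound in~\eqref{def:globalcoercivity}, giving $|a^{\Lambda}_{\D}(\underline{u}^l_{\D},\v_{\D})|\leq\lambda_\sharp\alpha_\sharp|\underline{u}^l_{\D}|_{1,\D}|\v_{\D}|_{1,\D}$; here $|\underline{u}^l_{\D}|_{1,\D}\leq C_{l,\Gamma^D}\|g^D\|_{H^{\nicefrac12}(\Gamma^D)}$ in case~(i), while $|\underline{u}^l_{\D}|_{1,\D}=0$ in case~(ii), since $\underline{u}^M_{\D}=u^M\one_{\D}$ is constant. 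These three bounds combine into $|L(\v_{\D})|\leq C\big(\|f\|_{L^2(\Omega)}+\|g^N\|_{L^2(\Gamma^N)}+\|g^D\|_{H^{\nicefrac12}(\Gamma^D)}\big)|\v_{\D}|_{1,\D}$, with $C$ depending on the data and on $\D$ only through $\theta_{\D}$.

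With coercivity, boundedness, and continuity of $L$ in hand, Lax--Milgram furnishes a unique $\underline{u}^z_{\D}\in\V_{\D,0}$ solving~\eqref{eq:dis.pro} (equivalently, in finite dimension, coercivity already makes the associated linear operator injective, hence bijective). The lifting $\underline{u}^l_{\D}$ being fixed by the data, $\u^\infty_{\D}=\underline{u}^z_{\D}+\underline{u}^l_{\D}$ is then the unique solution to~\eqref{eq:dis.pro}-\eqref{def.uinf}. The estimate follows by testing~\eqref{eq:dis.pro} with $\v_{\D}=\underline{u}^z_{\D}$: coercivity on the left and the continuity bound on the right give $\lambda_\flat\alpha_\flat|\underline{u}^z_{\D}|_{1,\D}^2\leq L(\underline{u}^z_{\D})\leq C(\cdots)|\underline{u}^z_{\D}|_{1,\D}$, whence a bound on $|\underline{u}^z_{\D}|_{1,\D}$; the triangle inequality $|\u^\infty_{\D}|_{1,\D}\leq|\underline{u}^z_{\D}|_{1,\D}+|\underline{u}^l_{\D}|_{1,\D}$ then yields the claim.

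The hard part will be the continuity of $L$, that is, the discrete Poincaré(--Wirtinger) and trace inequalities with constants controlled solely by $\Omega$, $d$, and $\theta_{\D}$; I expect these to be precisely the functional inequalities gathered in the appendix, so that the remainder is a routine Lax--Milgram application. The only additional bookkeeping is to treat the Dirichlet ($\V^D_{\D,0}$) and pure Neumann ($\V^N_{\D,0}$, zero-mass) cases uniformly, the relevant Poincaré-type inequality differing between the two.
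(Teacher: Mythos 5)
Your proposal is correct and follows essentially the same route as the paper: existence/uniqueness from the coercivity estimate~\eqref{def:globalcoercivity} together with the fact that $|\cdot|_{1,\D}$ is a norm on $\V_{\D,0}$ (which in finite dimension is exactly the Lax--Milgram argument you invoke), then the a priori bound by testing with $\underline{u}^z_{\D}$, Cauchy--Schwarz, the discrete Poincar\'e and trace inequalities, and the case split on the lifting ($|\underline{u}^l_{\D}|_{1,\D}\leq C_{l,\Gamma^D}\|g^D\|_{H^{\nicefrac12}(\Gamma^D)}$ when $|\Gamma^D|>0$, and $|\underline{u}^l_{\D}|_{1,\D}=0$ for the constant mass lifting). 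Packaging the right-hand side as a continuous linear form rather than estimating it after testing is a purely presentational difference.
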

%

\begin{proof}
  The existence/uniqueness of $\u^z_\D\in\V_{\D,0}$ solution to~\eqref{eq:dis.pro} (and in turn of $\u^\infty_{\D}=\underline{u}^z_{\D}+\underline{u}_{\D}^l$) is a direct consequence of the coercivity estimate~\eqref{def:globalcoercivity}, and of the fact that $|\cdot|_{1,\D}$ defines a norm on $\V_{\D,0}$ (recall that $\V_{\D,0}$ denotes either $\V_{\D,0}^N$ when $|\Gamma^D|=0$, or $\V_{\D,0}^D$ otherwise). To prove the bound on $|\u^\infty_{\D}|_{1,\D}$, we use the triangle inequality:
  $$|\u^\infty_\D|_{1,\D}\leq|\underline{u}^z_\D|_{1,\D}+|\underline{u}_\D^l|_{1,\D}.$$
 {To estimate the first term, we test Problem~\eqref{eq:dis.pro} with $\v_{\D}=\underline{u}^z_{\D}\in\V_{\D,0}$, we use~\eqref{def:globalcoercivity}, and we apply the Cauchy--Schwarz inequality. We get
  $$\lambda_\flat\alpha_\flat | \underline{u}^z_{\D} |_{1,\D}^2\leq\|f\|_{L^2(\Omega)}\|u^z_{\M} \|_{L^2(\Omega)}+\|g^N\|_{L^2(\Gamma^N)}\|u^z_{\E}\|_{L^2(\Gamma^N)}+\lambda_\sharp\alpha_\sharp\,|\underline{u}_{\D}^l|_{1,\D}| \underline{u}^z_{\D} |_{1,\D}.$$
Using a discrete Poincar\'e inequality, recalled in Proposition~\ref{prop:poinca}, and applied to $\underline{u}^z_{\D}\in\V_{\D,0}$, as well as the discrete trace inequality of~\cite[Eq.~(B.58), $p=2$]{DEGGH:18} combined with a discrete Poincar\'e inequality, we obtain
\[
	| \underline{u}^z_{\D} |_{1,\D} \leq C_1 (\|f\|_{L^2(\Omega)}+	\|g^N\|_{L^2(\Gamma^N)} ) + C_2	|\underline{u}_{\D}^l|_{1,\D} .
\]
It remains to estimate the norm of the lifting $|\underline{u}_\D^l|_{1,\D}$:
\begin{itemize}
\item[(i)] if $|\Gamma^D|>0$, one has $\underline{u}_\D^l =\underline{u}^D_\D$ (interpolate of the lifting $u^D$), therefore $|\underline{u}_\D^l|_{1,\D}\leq C_{l,\Gamma^D}\|g^D\|_{H^{\nicefrac12}(\Gamma^D)}$;
\item[(ii)] if $|\Gamma^D|= 0$, since $\underline{u}_\D^l = \u_{\D}^M = u^M \one_\D $, the lifting is constant and $|\underline{u}_\D^l|_{1,\D} = 0$.
\end{itemize}
}
\end{proof}
}


\section{Definition of the schemes and well-posedness} \label{description}

In this section, we introduce and study the well-posedness of three HFV schemes, two linear ones and a nonlinear scheme, for the time-dependent advection-diffusion problem~\eqref{evol:mixed:ino}. For the first two (linear) schemes, we introduce and study in the first place their steady versions on Problem~\eqref{sta:mixed:ino}.
For the nonlinear scheme, by anticipation of the asymptotic analysis of Section~\ref{asnon}, we restrict our study to the case where the (positive) solution to Problem~\eqref{evol:mixed:ino} converges in long time towards the so-called thermal equilibrium (see~\eqref{eq:therm}). However, as it will be verified numerically in Section~\ref{acc} in the stationary setting, our scheme is applicable to more general data.
We consider a fixed spatial discretisation $\D$ of $\Omega$, which satisfies the conditions detailed in Section~\ref{sec:discre}, and a fixed time step $\Delta t > 0$ for the time discretisation.

{
\begin{rem}[Linear schemes and nonhomogeneous data] \label{rem:ls}
  The linearity of Problem~\eqref{evol:mixed:ino} implies that, (i) if $|\Gamma^D|>0$, the shifted variable $u^z \defi u - u^D$ (recall that $u^D$ is a known lifting of the Dirichlet datum $g^D$) satisfies an advection-diffusion equation with zero Dirichlet boundary condition on $\Gamma^D$, and (ii) otherwise, the shifted variable $u^z\defi u-u^M$ (recall that $u^M=\frac{M}{|\Omega|}$ is the mass lifting) satisfies a (compatible) pure Neumann advection-diffusion equation with zero-mass constraint. Thus, without loss of generality, we can restrict our study to the homogeneous case $g^D=0$ or $M=0$. For an example (in the steady, purely diffusive case) of how to handle at the discrete level nonhomogeneous data $g^D$ or $M$, we refer the reader to Problem~\eqref{eq:dis.pro}-\eqref{def.uinf} and Proposition~\ref{pro:wp} above.
  Notice that such manipulations are possible for linear schemes only.
\end{rem}
}

\subsection{Standard HFV scheme}

We present here the HFV variant of the HMM family of schemes introduced in \cite{BdVDM:11,Droni:10}.

\subsubsection{Stationary problem} \label{sse:hfv-sta}

{We consider Problem \eqref{sta:mixed:ino} with $g^D = 0$ when $|\Gamma^D|>0$, or $M=\int_{\Omega}u^\infty=0$ otherwise (cf.~Remark~\ref{rem:ls}).}
Locally to any cell $K\in\M$, we introduce the discrete bilinear form $a_K:\V_K\times\V_K\to\R$ such that, for all $\u_K,\v_K\in\V_K$, 
\begin{equation} \label{eq:aK}
	{a_K(\u_K,\v_K) \defi a_K^{\Lambda}(\u_K,\v_K)+a_K^{\phi}(\u_K,\v_K)},
\end{equation}
where the diffusive part $a_K^{\Lambda}$ is defined by~\eqref{def:aKL} (and rewrites as~\eqref{aKL-flux} in terms of the local diffusive fluxes $F_{K,\s}^\Lambda(\u_K)$ given by~\eqref{diff.flux}), and the advective part $a^{\phi}_K$ is defined by
\begin{equation} \label{aKV-flux}
  a^\phi_K(\u_K,\v_K)\defi\sum_{\s\in\E_K}F_{K,\s}^{\phi}(\u_K)(v_K-v_\s),
\end{equation}
with $F^{\phi}_{K,\s}(\u_K)$ an approximation of the normal advective flux $\int_\s u^\infty V^\phi\cdot n_{K,\s}$.
In order to define the numerical advective fluxes, we need to introduce some data. We set $V^\phi_{K,\s} \defi \frac{1}{|\s|} \int_\s V^\phi\cdot n_{K, \s}$, and $\mu_\s \defi\min\big(1,\min_{K\in\M_\s}{\rm Sp}(\Lambda_K)\big)>0$ where $\Lambda_K\defi\frac{1}{|K|}\int_K \Lambda$. {We could as well use the finer local P\'eclet number introduced in \cite{ChBo:21}, namely consider the value $\mu_\s \defi\min\big(1,\min_{K\in\M_\s}(n_{K,\s}\cdot \Lambda_K n_{K,\s})\big)$, but we choose here to stick to the formula advocated in~\cite{BdVDM:11,Droni:10}.}
We also consider a Lipschitz continuous function $A : \R \to \R $,
 satisfying the following conditions:
\begin{equation}\label{condB}
\begin{array}{r }
A(0) = 0, \\
\forall s \in \R , \; A(-s) - A(s) = s,\\
\forall s \in \R , \;  A(-s) + A(s) \geq 0.
\end{array}
\end{equation}
{Notice that $A = B-1$, where $B$ is the classical function used for the $B$-schemes introduced in \cite{CHDr:11}. In the $B$-schemes framework, advection and diffusion are simultaneously treated in the definition of the numerical flux. Here, as in~\cite{BdVDM:11}, only the advective part is considered, whence the fact that $A = B-1$.}
Standard choices of $A$ functions include: 
\begin{itemize}
\item the centred discretisation: $A : s \mapsto -\frac{s}{2}$;
\item the upwind discretisation: $A : s \mapsto \max(-s,0)$;
\item the \SG discretisation: $A : s \mapsto 
\left \{
			\begin{array}{r }
		\frac{s}{\e^s -1} -1\;\text{ if } s \neq 0  \\
		0 \;\text{ if } s = 0
			\end{array}
	\right . .
$
\end{itemize}
We eventually define, for all $K\in\M$, and all $\s\in\E_K$, the numerical advective flux: for all $\u_K \in \V_K$,
\begin{equation} \label{def_conv}
	{F_{K,\s} ^{\phi}} (\u_K) \defi 
		|\s|\frac{\mu_\s }{d_{K,\s}} \left ( 
		A \left ( - \frac{d_{K,\s}}{ \mu_\s}  V^\phi_{K,\s} \right ) u_K 
		-  A \left ( \frac{d_{K,\s}}{ \mu_\s}  V^\phi_{K,\s} \right ) u_\s  \right ).
\end{equation}
Letting $a_{\D}:\V_{\D}\times\V_{\D}\to\R$ be the (global) discrete bilinear form such that, for all $\u_\D,\v_\D\in\V_\D$,
\begin{equation} \label{eq:aD}
  a_\D(\u_\D,\v_\D)\defi\sum_{K\in\M}a_K(\u_K,\v_K),
\end{equation}
and recalling that $\V_{\D,0}$ denotes either $\V_{\D,0}^N$ whenever $|\Gamma^D|=0$, or $\V_{\D,0}^D$ otherwise, the discrete problem reads: Find $\u^\infty_\D\in\V_{\D,0}$ such that
\begin{equation} \label{sccomp3}
  a_\D (\u^\infty_\D, \v_\D)  = (f, v_\M)_\Omega + (g^N, v_\E)_{\Gamma^N}\qquad\forall\v_\D\in\V_{\D,0}.
\end{equation}
Remark that, for pure Neumann boundary conditions with $M\neq 0$, as opposed to the purely diffusive case of Problem~\eqref{eq:dis.pro}, $a_\D(\u^M_\D,\v_\D)\neq 0$ a priori for $\v_\D\in\V^N_{\D,0}$.
The well-posedness of~\eqref{sccomp3} is discussed in the following proposition.
\begin{prop}[Well-posedness] \label{prop:coerc}
  Let $A$ be a Lipschitz continuous function satisfying~\eqref{condB}. If the advection field {$V^\phi$} satisfies the two following conditions:
  \begin{subequations}\label{cond:coerc}
    \begin{align}
      & (i) \text{ almost everywhere on } \Gamma^N, \,  V^\phi \cdot n  \leq 0, \label{cond:coerc1}  \\
      & (ii) \text{ there exists } \beta <\frac{2 \lambda_\flat\alpha_\flat}{C_{P}^2} \text{such that, almost everywhere in } 
          \Omega, \, \divergence V^\phi \geq -\beta, \label{cond:coerc2} 
    \end{align}
  \end{subequations}
  where $\lambda_\flat\alpha_\flat$ is the coercivity constant of~\eqref{def:globalcoercivity}, and $C_P$ is either equal to $C_{PW}$ if $|\Gamma^D|=0$ or to $C_{P,\Gamma^D}$ otherwise (where $C_{PW},C_{P,\Gamma^D}$ are the Poincar\'e constants of Proposition~\ref{prop:poinca}), then there exists $\kappa >0$, only depending on $\Lambda$, $\beta$, $\Omega$, $d$, $\Gamma^D$, and $\theta_\D$ such that
  \begin{equation} \label{def:kappa}
    \forall \v_\D \in \V_{\D,0}, \qquad  a_\D (\v_\D, \v_\D)  \geq \kappa |\v_{\D}|^2_{1,\D}.
  \end{equation}
  Consequently, there exists a unique solution $\u^\infty_\D\in\V_{\D,0}$ to Problem~\eqref{sccomp3}. {Moreover, one has $|\u^\infty_\D|_{1,\D}\leq C\big(\|f\|_{L^2(\Omega)}+\|g^N\|_{L^2(\Gamma^N)}\big)$, for some $C>0$ depending on the data, and on the discretisation $\D$ only through $\theta_\D$.}
\end{prop}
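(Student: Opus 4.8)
The plan is to establish the coercivity estimate~\eqref{def:kappa} first, since both the existence/uniqueness and the a priori bound then follow by standard arguments. I would split the bilinear form as $a_\D=a_\D^{\Lambda}+a_\D^\phi$ and lean on the already-proven diffusive coercivity~\eqref{def:globalcoercivity}, so that the whole difficulty reduces to bounding the advective contribution $a_\D^\phi(\v_\D,\v_\D)$ from below by a term that the diffusive part can absorb.

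First, I would work locally. Setting $s_{K,\s}\defi\frac{d_{K,\s}}{\mu_\s}V^\phi_{K,\s}$, I would establish for each $K\in\M$ and each $\s\in\E_K$ the elementary identity
$$\big(A(-s_{K,\s})v_K-A(s_{K,\s})v_\s\big)(v_K-v_\s)=\frac{A(-s_{K,\s})+A(s_{K,\s})}{2}(v_K-v_\s)^2+\frac{s_{K,\s}}{2}(v_K^2-v_\s^2),$$
which is a direct consequence of the relation $A(-s)-A(s)=s$ in~\eqref{condB}. Plugging this into~\eqref{aKV-flux}--\eqref{def_conv} and using $|\s|\frac{\mu_\s}{d_{K,\s}}\frac{s_{K,\s}}{2}=\frac{|\s|}{2}V^\phi_{K,\s}$, the first term yields, after summation over $K\in\M$, a \emph{nonnegative} quantity thanks to the sign property $A(-s)+A(s)\geq 0$ in~\eqref{condB}, while the second produces a ``divergence'' term $\frac12\sum_{K\in\M}\sum_{\s\in\E_K}|\s|V^\phi_{K,\s}(v_K^2-v_\s^2)$.

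The key step is then to reorganise this last term. Gathering the cellwise contributions carrying $v_K^2$ and applying the divergence theorem (licit since $V^\phi\in H(\divergence;\Omega)$) gives $\sum_{\s\in\E_K}|\s|V^\phi_{K,\s}=\int_K\divergence V^\phi$; gathering the contributions carrying $v_\s^2$ and using $V^\phi_{L,\s}=-V^\phi_{K,\s}$ at interfaces makes all interior terms cancel, leaving only a boundary sum over $\E_{ext}$. For $\v_\D\in\V_{\D,0}$, the Dirichlet boundary faces carry $v_\s=0$, while on the remaining (Neumann) faces assumption~\eqref{cond:coerc1} ensures the correct sign; the cellwise term is controlled from below by $-\frac{\beta}{2}\|v_\M\|_{L^2(\Omega)}^2$ using~\eqref{cond:coerc2}. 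I would conclude that $a_\D^\phi(\v_\D,\v_\D)\geq-\frac{\beta}{2}\|v_\M\|_{L^2(\Omega)}^2$, and then absorb this term via the discrete Poincar\'e inequality of Proposition~\ref{prop:poinca} on $\V_{\D,0}$, namely $\|v_\M\|_{L^2(\Omega)}\leq C_P|\v_\D|_{1,\D}$, to reach~\eqref{def:kappa} with $\kappa=\lambda_\flat\alpha_\flat-\frac{\beta C_P^2}{2}>0$ (up to harmlessly replacing $\beta$ by $\max(\beta,0)$, which is allowed since the bound on $\beta$ in~\eqref{cond:coerc2} has a positive right-hand side).

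Finally, existence and uniqueness follow since $|\cdot|_{1,\D}$ is a norm on the finite-dimensional space $\V_{\D,0}$, so coercivity renders the linear system~\eqref{sccomp3} invertible (equivalently, by Lax--Milgram). The a priori bound is obtained by testing~\eqref{sccomp3} with $\v_\D=\u^\infty_\D$, using~\eqref{def:kappa}, Cauchy--Schwarz, and the discrete Poincar\'e and trace inequalities to bound $\|u^\infty_\M\|_{L^2(\Omega)}$ and $\|u^\infty_\E\|_{L^2(\Gamma^N)}$ by $|\u^\infty_\D|_{1,\D}$, exactly as in the proof of Proposition~\ref{pro:wp}. The only delicate point I anticipate is the bookkeeping in the reorganisation of the advective term --- keeping track of the boundary contributions and verifying that assumptions~\eqref{cond:coerc1}--\eqref{cond:coerc2} are precisely what is needed to keep $\kappa$ positive.
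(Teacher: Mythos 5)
Your proposal is correct and follows essentially the same route as the paper's proof: the same local identity derived from $A(-s)-A(s)=s$, the same sign argument exploiting $A(-s)+A(s)\geq 0$, the same reorganisation of the term $\frac12\sum_{K}\sum_{\s}|\s|V^\phi_{K,\s}(v_K^2-v_\s^2)$ into a cellwise divergence contribution and a Neumann boundary contribution, and the same Poincar\'e absorption yielding $\kappa=\lambda_\flat\alpha_\flat-\frac{\beta C_P^2}{2}$. The concluding existence/uniqueness and a priori bound arguments also coincide with the paper's (which refers back to the proof of Proposition~\ref{pro:wp}), and your remark about replacing $\beta$ by $\max(\beta,0)$ is a harmless refinement of a step the paper leaves implicit.
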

\begin{proof}
  Let $K\in\M$, and $\s\in\E_K$. Let $s_{K,\s}\defi\frac{d_{K,\s}}{\mu_\s}V^\phi_{K,\s}$ and $\zeta_{K,\s}\defi A(-s_{K,\s})+A(s_{K,\s})$. According to~\eqref{condB}, $\zeta_{K,\s}\geq 0$, and we have $A(-s_{K,\s})=\frac{s_{K,\s}+\zeta_{K,\s}}{2}$ and $-A(s_{K,\s})=\frac{s_{K,\s}-\zeta_{K,\s}}{2}$. Consequently, for all $\v_K\in\V_K$,
  $$\big(A(-s_{K,\s})v_K-A(s_{K,\s})v_\s\big)(v_K-v_\s)=\frac{1}{2}s_{K,\s}(v_K^2-v_\s^2)+\frac{1}{2}\zeta_{K,\s}(v_K-v_\s)^2\geq\frac{1}{2}s_{K,\s}(v_K^2-v_\s^2).$$
  Recalling~\eqref{aKV-flux} and~\eqref{def_conv}, we infer that, for all $\v_\D\in\V_\D$,
  $$a^\phi_{\D}(\v_\D,\v_\D)\defi\sum_{K\in\M}a_K^\phi(\v_K,\v_K)\geq\frac{1}{2}\sum_{K\in\M}\sum_{\s\in\E_K}|\s|V_{K,\s}^\phi(v_K^2-v_\s^2).$$
  Since, for all $K\in\M$, $\sum_{\s\in\E_K}|\s|V^\phi_{K,\s}=\int_K\divergence V^\phi$ and, for all $\s\in\E_{int}$, $\sum_{K\in\M_\s}|\s|V^\phi_{K,\s}=0$, we have, for all $\v_\D\in\V_{\D,0}$,
  \begin{equation} \label{lemma:coerc}
    a_\D^\phi(\v_\D,\v_\D)\geq\frac{1}{2}(\divergence V^\phi,v_\M^2)_\Omega-\frac{1}{2}(V^\phi\cdot n,v_\E^2)_{\Gamma^N}.
  \end{equation}
  Combining {\eqref{eq:aK} with~\eqref{eq:aD}, \eqref{cond:coerc} with~\eqref{lemma:coerc}}, and the coercivity result~\eqref{def:globalcoercivity}, we deduce that, for all $\v_\D\in\V_{\D,0}$,
  \begin{equation} \label{garding}
    a_\D(\v_\D,\v_\D) \geq \lambda_\flat\alpha_\flat |\v_\D|_{1,\D}^2 - \frac{\beta}{2} \|v_\M\|_{L^2(\Omega)}^2.
  \end{equation}
  Using a Poincar\'e inequality from Proposition~\ref{prop:poinca}, one has, for all $\v_\D\in\V_{\D,0}$,
  $$a_\D(\v_\D,\v_\D)\geq \left(\lambda_\flat\alpha_\flat - \frac{\beta C_{P}^2}{2}\right)|\v_\D|_{1,\D}^2,$$
  therefore the estimate~\eqref{def:kappa} holds for $\kappa = \lambda_\flat\alpha_\flat - \frac{\beta C_{P}^2}{2}$, which is positive according to~\eqref{cond:coerc2}.
  The existence/uniqueness of $\u^\infty_\D\in\V_{\D,0}$ solution to~\eqref{sccomp3} is a direct consequence of the coercivity estimate~\eqref{def:kappa}, and of the fact that $|\cdot|_{1,\D}$ defines a norm on $\V_{\D,0}$. The continuous dependency of $\u^\infty_{\D}$ with respect to the data can then be proved as in the proof of Proposition~\ref{pro:wp}. 
\end{proof}
{
\begin{rem}[Assumptions on the advection field] \label{rem:ass}
  The well-posedness result of Proposition~\ref{prop:coerc} 
  does not use the fact that $V^\phi$ is related to the gradient of a potential, and thus extends to general advection fields. Even better, under a smallness assumption on the meshsize $h_\D$, it is actually possible to prove well-posedness for Problem~\eqref{sccomp3} without assumptions~\eqref{cond:coerc} on the advection field. The starting point to prove so is a discrete G\aa{}rding inequality like~\eqref{garding}, which can be easily obtained in full generality from~\eqref{lemma:coerc} (in the case $|\Gamma^N|>0$, it is obtained from the multiplicative discrete trace inequality of~\cite[Eq.~(B.57), $p=2$]{DEGGH:18} and holds for $h_\D$ sufficiently small). The proof then proceeds by contradiction, assuming that a discrete inf-sup condition does not hold in the limit $h_\D\to 0$, and using a compactness argument (cf.~\cite[Lemmas B.27-B.33, $p=2$]{DEGGH:18}){, together with the unconditional well-posedness of the continuous problem~\eqref{sta:mixed:ino} (cf.~\cite{Droni:02})}.
%
\end{rem}
}
\begin{rem}[Choice of $A$] \label{rem:SG}
  The choice of the function $A$ is of great importance. In particular, the \SG approximation is rather classical in various contexts. First introduced in \cite{SchGu:69} in the framework of TPFA schemes, this approximation of the flux ensures the preservation of the so-called thermal (or Gibbs) equilibrium at the discrete level, which has the form: 
  \begin{equation} \label{eq:therm}
    u^\infty_{th}={\bar \rho}\e^{-\phi}
  \end{equation}
  where ${\bar \rho} \in \R_+$ is prescribed by the data. For instance, for pure Neumann boundary conditions, $f=0$, and $g^N=0$, we have ${\bar \rho} = M/\int_\Omega \e^{-\phi}$.   The discrete solution obtained with the TPFA scheme is then the interpolate of $u_{th}^\infty$. This property is no more true for the hybrid scheme and we observe numerically that, for ${\bar \rho}\neq 0$, the discrete solution $\u^\infty_\D\in\V_\D$ is in general not the HFV interpolate of $u_{th}^\infty$.
However, as explained in~\cite[pp.~553-554]{Droni:10}, provided the parameters $(\mu_\s)_{\s \in \E}$ are well-chosen, the \SG flux ensures an automatic upwinding to the scheme in the advection-dominated regime (whereas it degenerates towards the centred scheme in the diffusion-dominated regime).
\end{rem}

\subsubsection{Evolution problem}

{We consider Problem~\eqref{evol:mixed:ino} with $g^D=0$ when $|\Gamma^D|>0$, or $M=\int_{\Omega}u^{in}=0$ otherwise (see Remark~\ref{rem:ls}).}
We use a backward Euler discretisation in time, and the HFV discretisation introduced in Section~\ref{sse:hfv-sta} in space. The discrete problem reads: Find $\left(\u_{\D}^n\in\V_{\D,0}\right)_{n\geq 1}$ such that
\begin{subequations}\label{sch:HMM}
        \begin{empheq}[left = \empheqlbrace]{align}
	            \frac{1}{\Delta t } 
    	       (u^{n}_\M - u^{n-1}_\M, v_\M)_{\Omega} 
	            +  a_\D (\u_\D^{n}, \v_\D)
	             &=  (f, v_\M)_{\Omega} + (g^N, v_\E)_{\Gamma^N}&\quad&\forall\v_{\D}\in\V_{\D,0},\label{sch1:HMM}  \\
            u^0_K &= \frac{1}{|K|}\int_K u^{in } &\quad&\forall K\in\M, \label{sch2:HMM} 
        \end{empheq}
\end{subequations}
where $a_\D$ is defined by~\eqref{eq:aK} and \eqref{eq:aD}. Since $a_\D$ is coercive, the bilinear form in \eqref{sch1:HMM} is also coercive, so the scheme~\eqref{sch:HMM} is well-posed under the assumptions~\eqref{cond:coerc} on the advection field, as a straightforward consequence of Proposition~\ref{prop:coerc}.

\begin{rem}[Pure Neumann case] \label{rem:Neumannevol}
  When considering pure Neumann boundary conditions, and contrary to the stationary case, one can actually seek at each time step for a solution to Problem~\eqref{sch:HMM} in $\V_\D$, i.e., it is not necessary to seek for a solution in the constrained space $\V_{\D,0}^N$. Indeed, testing~\eqref{sch1:HMM} by $\underline{1}_\D\in\V_\D$, and using that $\int_{\Omega}f+\int_{\partial\Omega}g^N=0$ and $a_\D(\u_\D^n,\underline{1}_\D)=0$, one can automatically infer that $\int_{\Omega}u_\M^n=\int_{\Omega}u_\M^{n-1}$ for all $n\geq 1$, that is, $\int_{\Omega}u_{\M}^n=\int_{\Omega}u_\M^0=\int_{\Omega}u^{in}=M=0$ for all $n\geq 1$, i.e., $\u^n_\D\in\V^N_{\D,0}$ for all $n\in\mathbb{N}^\star$.
\end{rem}

\subsection{Exponential fitting HFV scheme}

Following ideas in~\cite{BMaPi:89} {(cf.~also~\cite{LBLMa:16,Madio:16} and~\cite{FiHer:17} for general advection fields)} in the context of finite element methods, we aim to design an unconditionally (i.e., without the need for assumptions~\eqref{cond:coerc} on the advection field $V^\phi$) {coercive} scheme for the advection-diffusion problem in the HFV framework.

\subsubsection{Stationary problem} \label{subsubsec:omega:sta}

We consider Problem~\eqref{sta:mixed:ino}. The strategy advocated in~\cite{BMaPi:89} is based on the following observation: at the continuous level, if $u^\infty$ is a solution to~\eqref{sta:mixed:ino}, letting
\begin{equation} \label{def:ome}
  \omega \defi \e^{-\phi},
\end{equation}
we can introduce the Slotboom change of variable $\rho^\infty \defi \frac{u^\infty}{\omega}$ (see \cite{Marko:86,MRiSc:90}). Then, noticing that $\nabla u^\infty + u^\infty \nabla \phi = \omega \nabla \rho^\infty -  \rho^\infty \omega \nabla \phi +\rho^\infty\omega \nabla \phi =\omega \nabla \rho^\infty$, the Slotboom variable $\rho^\infty$ equivalently solves the following pure diffusion problem:
{
\begin{equation} \label{def:sta:modeq} 
	\left\{
	\begin{split}
		- \divergence \left ( \omega\Lambda \nabla \rho^\infty\right ) &= f &&\text{ in } \Omega, \\
		 \rho^\infty  &= \omega^{-1}g^D&&\text{ on } \Gamma^D, \\
		 \omega\Lambda \nabla \rho^\infty \cdot n &= g^N &&\text{ on } \Gamma^N, 
	\end{split}
	\right.
\end{equation}
with additional constraint $\int_{\Omega}\omega\rho^\infty=M$ when $|\Gamma^D|=0$.
Following Remark~\ref{rem:ls} (with $\rho$ instead of $u$, $\rho^D$ lifting of $\omega^{-1}g^D$ instead of $u^D$ lifting of $g^D$, and $\rho^M\defi\frac{M}{\int_{\Omega}\omega}\in\R$ instead of $u^M$), we consider Problem~\eqref{def:sta:modeq} with $\rho^\infty= 0$ on $\Gamma^D$ when $|\Gamma^D|>0$, or $\int_{\Omega}\omega\rho^\infty=0$ otherwise (which is equivalent to consider Problem~\eqref{sta:mixed:ino} with $g^D=0$ or $M=\int_\Omega u^\infty=0$).}
Since $\phi$ is continuous on $\overline{\Omega}$, there exist {$\omega_\flat,\omega_\sharp$} with $0<\omega_\flat\leq\omega_\sharp<\infty$, only depending on $\phi$ and $\Omega$, such that $\omega_\flat\leq\omega(x)\leq\omega_\sharp$ for all $x\in\overline{\Omega}$. We then denote by $L^2_\omega(\Omega)$ the $\omega$-weighted $L^2$ space on $\Omega$.

At the discrete level, instead of discretising~\eqref{sta:mixed:ino}, we approximate the solution to~\eqref{def:sta:modeq}.
For any $K\in\M$, we let $a^\omega_K : \V_K \times \V_K \to \R$ be the discrete bilinear form such that, for all $\ro_K,\v_K\in\V_K$,
\begin{equation} \label{def:omega:bilin}
	a^\omega_K (\ro_K,\v_K) \defi (\omega\Lambda\nabla_K\ro_K,\nabla_K\v_K)_K,
\end{equation}
and, classically, we let $a_\D^\omega:\V_\D\times\V_\D\to\R$ be the corresponding global discrete bilinear form obtained by sum of the local contributions.
To account for the change of variable, we let {$\V_{\D,0}^\omega$} be the space $\V_{\D,0}^D$ when $|\Gamma^D|>0$, and the space $\left \lbrace \v_{\D} \in \V_{\D} : \int_{\Omega}\omega v_{\M} = 0 \right \rbrace$ otherwise.
The discrete problem reads: Find $\ro_\D^\infty\in\V^\omega_{\D,0}$ such that 
\begin{equation} \label{def:omega:sta}
	 a_\D ^\omega (\ro_\D^\infty, \v_\D) 
	= (f, v_\M)_{\Omega} 
		+ (g^N, v_\E)_{\Gamma^N}\qquad\forall\v_\D\in\V^\omega_{\D,0}.
\end{equation}
Remark that, for pure Neumann boundary conditions with $M\neq 0$, as for Problem~\eqref{eq:dis.pro}, letting $\ro_\D^M\defi\rho^M\one_\D$, $a_\D^\omega(\ro^M_\D,\v_\D)=0$ for all $\v_\D\in\V_\D$.
Letting $\underline{\omega}_\D\in\V_\D$ be the HFV interpolate of $\omega$, i.e.,
\begin{equation} \label{def-omega}
  \omega_K \defi \frac{1}{|K|} \int_K \omega\quad\forall K\in\M, \qquad  \omega_\s \defi\frac{1}{|\s|} \int_\s \omega\quad\forall\s\in\E,
\end{equation}
the approximation of the solution to Problem~\eqref{sta:mixed:ino} is finally defined as the product $\u^\infty_\D\defi\underline{\omega}_\D\times\ro^\infty_\D$, that is
\begin{equation} \label{def:omega:u}
	u^\infty_K = \omega_K\rho_K^\infty\quad\forall K\in\M,  \qquad u_\s^\infty = \omega_\s\rho_\s^\infty \quad\forall\s\in\E.
\end{equation}
Remark that $\u_{\D}^\infty\in\V_{\D,0}$.
Reasoning as in Section~\ref{sse:wp}, one can easily prove that, for all $\v_\D\in\V_\D$,
\begin{equation} \label{ef-coer}
  a_\D^\omega (\v_\D, \v_\D) \geq \omega_\flat \lambda_\flat \alpha_\flat\, |\v_\D|_{1,\D}^2.
\end{equation}
This estimate is instrumental to infer well-posedness for Problem~\eqref{def:omega:sta}-\eqref{def:omega:u}.
{
\begin{prop}[Well-posedness] \label{pro:wpef}
  There exists a unique solution $\u^\infty_\D\in\V_{\D,0}$ to Problem~\eqref{def:omega:sta}-\eqref{def:omega:u}, which satisfies $|\u^\infty_{\D}|_{1,\D}\leq C\left(\|f\|_{L^2(\Omega)}+\|g^N\|_{L^2(\Gamma^N)}\right)$, for some $C>0$ depending on the data, and on the discretisation $\D$ only through $\theta_\D$.
\end{prop}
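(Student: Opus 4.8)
The plan is to first solve the Slotboom $\rho$-problem~\eqref{def:omega:sta} by a Lax--Milgram argument, and then transfer the result to $\u_\D^\infty$ through the componentwise multiplication~\eqref{def:omega:u}. Before invoking Lax--Milgram, I would check that $|\cdot|_{1,\D}$ is a norm on $\V^\omega_{\D,0}$: when $|\Gamma^D|>0$ this is the space $\V^D_{\D,0}$, on which the property is already known; when $|\Gamma^D|=0$, if $|\v_\D|_{1,\D}=0$ then $\v_\D=c\,\one_\D$, and the weighted constraint $\int_\Omega\omega v_\M=0$ together with $\int_\Omega\omega>0$ forces $c=0$. The bilinear form $a_\D^\omega$ is coercive by~\eqref{ef-coer} and bounded (reasoning as in Section~\ref{sse:wp}, one gets $a_\D^\omega(\u_\D,\v_\D)\leq\omega_\sharp\lambda_\sharp\alpha_\sharp|\u_\D|_{1,\D}|\v_\D|_{1,\D}$), and the right-hand side is a continuous linear functional on $\V^\omega_{\D,0}$ by Cauchy--Schwarz combined with the discrete Poincar\'e inequality of Proposition~\ref{prop:poinca} and the discrete trace inequality of~\cite[Eq.~(B.58)]{DEGGH:18}. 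This yields a unique $\ro_\D^\infty\in\V^\omega_{\D,0}$.

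Next I would set $\u_\D^\infty\defi\underline{\omega}_\D\times\ro_\D^\infty$ and observe that $\ro_\D\mapsto\underline{\omega}_\D\times\ro_\D$ is a bijection from $\V^\omega_{\D,0}$ onto $\V_{\D,0}$: it is invertible componentwise since $\omega_K,\omega_\s>0$, and it maps $\V^\omega_{\D,0}$ into $\V_{\D,0}$ (this is exactly the observation $\u_\D^\infty\in\V_{\D,0}$ recorded below~\eqref{def:omega:u}, using $\int_\Omega u_\M=\sum_{K\in\M}|K|\omega_K\rho_K=\int_\Omega\omega\rho_\M$ in the pure Neumann case, and $u_\s=\omega_\s\rho_\s=0$ on $\E^D_{ext}$ otherwise). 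Existence and uniqueness of $\ro_\D^\infty$ thus transfer to $\u_\D^\infty$.

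It then remains to prove the a priori bound. Testing~\eqref{def:omega:sta} with $\v_\D=\ro_\D^\infty$, using coercivity~\eqref{ef-coer}, Cauchy--Schwarz, and the same Poincar\'e and trace inequalities as above (in the pure Neumann case, the weighted zero-mass constraint is handled by splitting off the standard mean, which is controlled thanks to $\omega_\flat\leq\omega\leq\omega_\sharp$), I would obtain $|\ro_\D^\infty|_{1,\D}\leq C(\|f\|_{L^2(\Omega)}+\|g^N\|_{L^2(\Gamma^N)})$, exactly as in the proof of Proposition~\ref{pro:wp} but without a lifting term.

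The main obstacle is the conversion of this estimate into a bound on $|\u_\D^\infty|_{1,\D}$, since multiplication by $\underline{\omega}_\D$ does not commute with the discrete differences defining $|\cdot|_{1,\D}$. I would use the discrete product rule $u_K-u_\s=\omega_K(\rho_K-\rho_\s)+(\omega_K-\omega_\s)\rho_\s$, so that $(u_K-u_\s)^2\leq 2\omega_\sharp^2(\rho_K-\rho_\s)^2+2(\omega_K-\omega_\s)^2\rho_\s^2$. The first term sums to $2\omega_\sharp^2|\ro_\D^\infty|_{1,\D}^2$. For the commutator term, since $\phi\in C^1(\overline{\Omega})$ the function $\omega=\e^{-\phi}$ is Lipschitz on $\overline{\Omega}$, whence $|\omega_K-\omega_\s|\lesssim h_K$ (both being averages of $\omega$ over subsets of $\overline{K}$); combining this with $|P_{K,\s}|=\frac{|\s|d_{K,\s}}{d}$ and the regularity bound $\frac{h_K}{d_{K,\s}}\leq\theta_\D$ gives $\frac{|\s|}{d_{K,\s}}(\omega_K-\omega_\s)^2\lesssim\theta_\D^2|P_{K,\s}|$. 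Splitting $\rho_\s^2\leq 2(\rho_\s-\rho_K)^2+2\rho_K^2$, the cell-value part sums to $\|\rho_\M^\infty\|_{L^2(\Omega)}^2$ while the jump part is absorbed into $h_\D^2|\ro_\D^\infty|_{1,\D}^2$; a final application of the discrete Poincar\'e inequality to $\|\rho_\M^\infty\|_{L^2(\Omega)}$ together with $h_\D\leq\mathrm{diam}(\Omega)$ closes the estimate, the resulting constant depending on the data (through $\phi$ and $\Omega$) and on $\D$ only through $\theta_\D$.
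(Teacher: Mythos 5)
Your proposal is correct and follows essentially the same route as the paper: coercivity~\eqref{ef-coer} of $a_\D^\omega$ on $\V_{\D,0}^\omega$ (where $|\cdot|_{1,\D}$ is a norm) gives existence and uniqueness of $\ro_\D^\infty$, the estimate on $|\ro_\D^\infty|_{1,\D}$ is obtained as in Proposition~\ref{pro:wp}, and the bound on $|\u_\D^\infty|_{1,\D}$ is transferred through a discrete product rule applied to $\underline{\omega}_\D\times\ro_\D^\infty$. The only variation is in the commutator term $(\omega_K-\omega_\s)^2$: the paper invokes the local $L^2$-stability of the HFV interpolant together with $\nabla\omega=-\omega\nabla\phi$ to reach $\|\rho_\M^\infty\|_{L^2_\omega(\Omega)}$, whereas you use the Lipschitz bound $|\omega_K-\omega_\s|\lesssim h_K$ plus an extra splitting of $\rho_\s^2$ into a cell part and a jump part, which is equally valid since $\phi\in C^1(\overline{\Omega})$ and yields the same dependence of the constant on $\theta_\D$ and the data.
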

\begin{proof} 
  The existence/uniqueness of $\ro^\infty_{\D}\in\V^\omega_{\D,0}$ solution to~\eqref{def:omega:sta} (and in turn of $\u^\infty_{\D}\in\V_{\D,0}$) is a direct consequence of the coercivity estimate~\eqref{ef-coer}, and of the fact that $|\cdot|_{1,\D}$ clearly defines a norm on $\V^\omega_{\D,0}$. To prove the bound on $|\u^\infty_{\D}|_{1,\D}$, we use the fact that $\u_\D^\infty=\underline{\omega}_\D\times\ro_\D^\infty$. For all $K\in\M$,
  $$|\u^\infty_K|_{1,K}^2=\sum_{\s\in\E_K}\frac{|\s|}{d_{K,\s}}(\omega_K\rho_K^\infty-\omega_\s\rho_\s^\infty)^2\leq 2\sum_{\s\in\E_K}\frac{|\s|}{d_{K,\s}}\left(\omega_\s^2(\rho^\infty_K-\rho^\infty_\s)^2 + (\rho_K^\infty)^2(\omega_K-\omega_\s)^2\right).$$
  By definition~\eqref{def-omega} of $\underline{\omega}_\D$, and local stability of the HFV interpolant (cf.~\cite[Proposition B.7, $p=2$]{DEGGH:18} and its proof), we infer
  $$|\u^\infty_K|_{1,K}^2\leq 2\,\omega_\sharp^2\,|\ro^\infty_K|_{1,K}^2+2\,(\rho_K^\infty)^2C_{sta}^2\|\nabla\omega\|_{L^2(K;\R^d)}^2,$$
  with $C_{sta}>0$ only depending on $d$ and $\theta_\D$.
  Since $\nabla\omega=-\omega\nabla\phi$, $\phi\in C^1(\overline{\Omega})$, and $\omega>0$, we have
  $$\|\nabla\omega\|_{L^2(K;\R^d)}^2\leq\omega_\sharp\,\sup_{x\in\overline{\Omega}}|\nabla\phi(x)|^2|K|\,\omega_K.$$
  Summing over $K\in\M$ then yields
  $$|\u_\D^\infty|_{1,\D}^2\leq 2\,\omega_\sharp^2\,|\ro^\infty_\D|_{1,\D}^2+2\,C_{sta}^2\,\omega_\sharp\,\sup_{x\in\overline{\Omega}}|\nabla\phi(x)|^2\|\rho^\infty_\M\|_{L^2_\omega(\Omega)}^2.$$
  When $|\Gamma^D|>0$, $\|\rho^\infty_\M\|_{L^2_\omega(\Omega)}^2\leq\omega_\sharp\|\rho^\infty_\M\|_{L^2(\Omega)}^2\leq\omega_\sharp \,C_{P,\Gamma^D}^2|\ro^\infty_\D|_{1,\D}^2$, where we have applied the discrete Poincar\'e inequality~\eqref{poincadir} to $\ro_\D^\infty\in\V_{\D,0}^D$. Otherwise, $\ro_\D^\infty\in\V_{\D,0}^\omega$ satisfies $\int_{\Omega}\omega\rho^\infty_\M=0$, and one can use~\cite[Lemma 5.2]{CCHHK:20} to infer that $\|\rho^\infty_\M\|_{L^2_\omega(\Omega)}\leq 2\|\rho^\infty_\M-\frac{1}{|\Omega|}\int_{\Omega}\rho^\infty_\M\|_{L^2_\omega(\Omega)}$, and finally get that $\|\rho^\infty_\M\|_{L^2_\omega(\Omega)}\leq 2\,\sqrt{\omega_\sharp}\,C_{PW}|\ro^\infty_\D|_{1,\D}$ applying the discrete Poincar\'e inequality~\eqref{poincawir} to $\ro^\infty_\D-\frac{1}{|\Omega|}\int_{\Omega}\rho^\infty_\M\underline{1}_\D\in\V_{\D,0}^N$.
  In any case, we end up bounding $|\u_\D^\infty|_{1,\D}$ by $|\ro^\infty_\D|_{1,\D}$, with multiplicative constant depending on the data, and on the discretisation $\D$ only through $\theta_\D$. The rest of the proof consists in bounding $|\ro_\D^\infty|_{1,\D}$, and proceeds as in the proof of Proposition~\ref{pro:wp}, using that $\|\rho^\infty_\M\|_{L^2(\Omega)}\leq 2\,\sqrt{\frac{\omega_\sharp}{\omega_\flat}}\,C_{PW}|\ro^\infty_\D|_{1,\D}$ when $|\Gamma^D|=0$.
\end{proof}
}
\noindent
In the sequel, the HFV scheme \eqref{def:omega:sta}-\eqref{def:omega:u} will be referred to as ``exponential fitting". Notice that no assumption on $V^\phi$ is needed to ensure its coercivity.

\begin{rem}[Preservation of the thermal equilibrium] \label{rem:thermeq}
  As for the exponential fitting scheme of~\cite{BMaPi:89}, the method \eqref{def:omega:sta}-\eqref{def:omega:u} preserves the thermal equilibrium, in the sense that $\u_\D^\infty={\bar \rho}\,\underline{\omega}_\D$ when $u^\infty=u^\infty_{th}$ (see~\eqref{eq:therm}). 
  This property is analogous to what holds true for the TPFA \SG scheme.
\end{rem}

\subsubsection{Evolution problem}

We consider Problem~\eqref{evol:mixed:ino}.
Following the previous strategy, letting $\rho\defi\frac{u}{\omega}$, one can show that $\rho$ equivalently solves the following transient pure diffusion problem:
\begin{equation} \label{def:evol:modeq}
	\left\{
	\begin{split}
		\omega\partial _ t \rho - \divergence ( \omega\Lambda  \nabla \rho ) &= f &&\text{ in } \R_+ \times \Omega, \\
		\rho &=  \omega^{-1}g^D&&\text{ on }  \R_+ \times \Gamma^D, \\
		\omega\Lambda \nabla \rho \cdot n &= g^N &&\text{ on } \R_+ \times \Gamma^N,\\		
	 	\rho(0,\cdot)&= \rho^{in} &&\text{ in } \Omega,
	\end{split}
	\right.
\end{equation}
where $\rho^{in}\defi\frac{u^{in}}{\omega}$. {Following Remark~\ref{rem:ls}, we consider Problem~\eqref{def:evol:modeq} with $\rho=0$ on $\Gamma^D$ when $|\Gamma^D|>0$, or $\int_{\Omega}\omega\rho^{in}=0$ otherwise (which is equivalent to consider Problem~\eqref{evol:mixed:ino} with $g^D=0$ or $M=\int_{\Omega}u^{in}=0$).}

At the discrete level, instead of discretising~\eqref{evol:mixed:ino}, we approximate the solution to~\eqref{def:evol:modeq}. We use a backward Euler discretisation in time, and the HFV discretisation introduced in Section~\ref{subsubsec:omega:sta} in space. The discrete problem reads: Find $\big(\ro_\D^n\in\V_{\D,0}^\omega\big)_{n\geq 1}$ such that
\begin{subequations}\label{sch:omega:evol}
        \begin{empheq}[left = \empheqlbrace]{align}
	            \frac{1}{\Delta t } 
    	       (\rho^{n}_\M - \rho^{n-1}_\M, \omega_\M v_\M)_{\Omega} 
	            +  a^\omega_\D (\ro_\D^{n}, \v_\D)
	             &=  (f, v_\M)_{\Omega} + (g^N, v_\E)_{\Gamma^N}&\quad&\forall\v_{\D}\in\V^\omega_{\D,0},\label{sch1:omega}  \\
            \rho^0_K &= \frac{1}{\omega_K|K|}\int_K u^{in } &\quad&\forall K\in\M, \label{sch2:omega} 
        \end{empheq}
\end{subequations}
where $a^\omega_\D$ is defined (locally) by~\eqref{def:omega:bilin}, and the approximation $\u^n_\D\in\V_{\D,0}$ of the solution to Problem~\eqref{evol:mixed:ino} is finally defined as $\u^n_\D\defi\underline{\omega}_\D\times\ro_\D^n$, i.e., according to~\eqref{def:omega:u} (with superscript $n$ instead of $\infty$). Once again, because of the coercivity of $a_\D^\omega$, the scheme~\eqref{sch:omega:evol} is unconditionally well-posed, as a straightforward consequence of Proposition~\ref{pro:wpef}.

\begin{rem}[Pure Neumann case] \label{rem:omega:ev:neu}
{When considering pure Neumann boundary conditions, as for the standard HFV scheme (see Remark \ref{rem:Neumannevol}), one can seek for a solution to Problem~\eqref{sch:omega:evol} in $\V_\D$.}
\end{rem}

\subsection{Nonlinear HFV scheme} \label{sse:nonlin}

We are interested in the evolution problem~\eqref{evol:mixed:ino}. We restrict our study to the pure Neumann case ($|\Gamma^D|=0$), and to the choice of data $f=0$, $g^N=0$, and $u^{in}\geq 0$ with $M=\int_{\Omega}u^{in}>0$ {(see Appendix~\ref{Ap:NLmixted} for the case of mixed Dirichlet-Neumann thermal equilibrium boundary conditions)}.
Under these assumptions, it is known that the solution $u$ to Problem~\eqref{evol:mixed:ino} is {strictly positive} on $\R_+^\star \times \Omega$.
Furthermore, in long time, $u(t)$ converges towards the thermal equilibrium. Indeed, we easily verify that the function $u^\infty=\rho^M\e^{-\phi}$, where we recall that $\rho^M=\frac{M}{\int_{\Omega}\e^{-\phi}}>0$ (cf.~Section~\ref{subsubsec:omega:sta}), solves the steady problem~\eqref{sta:mixed:ino} with same data.
Since $u>0$ on $\R_+^\star\times\Omega$, we can rewrite the flux $J = - \Lambda (\nabla u  + u \nabla\phi)$ under the nonlinear form
$$J = -u \Lambda \nabla \big( \log(u) +\phi\big)={-u \Lambda \nabla \log\left(\frac{u}{u^\infty}\right)}.$$
At the continuous level, introducing this nonlinearity enables to highlight the following entropy/dissipation structure of the model at hand: testing the equation against $\log\left(\frac{u}{u^\infty}\right)$, we get
\begin{equation} \label{nonlin:relation}
  \frac{{\rm d}}{{\rm d}t} \N (t) + \Diss(t) = 0,
\end{equation}
where 
\begin{equation}\label{nonlin:entro}
		\N(t) \defi \int_\Omega u^\infty\,\Phi_1\left(\frac{u(t)}{u^\infty}\right) \qquad\text{ and }\qquad \Diss(t) \defi \int_\Omega u(t)  \,\Lambda \nabla \log\left(\frac{u(t)}{u^\infty}\right) \cdot \nabla \log\left(\frac{u(t)}{u^\infty}\right),
\end{equation}
with $\Phi_1 (s) = s \log (s) - s + 1$ for all $s>0$. Since $\Phi_1\geq 0$, the relative entropy $\N(t)$ is a non-negative quantity (as well as the relative dissipation $\Diss(t)$). The entropy/dissipation structure~\eqref{nonlin:relation}-\eqref{nonlin:entro} is instrumental to prove the exponential convergence in time of the solution $u(t)$ to Problem~\eqref{evol:mixed:ino} towards the equilibrium $u^\infty$.
From the above nonlinear expression of the flux $J$, we build a nonlinear hybrid discretisation of the problem, leading to a scheme designed along the same principles as the nonlinear VAG and DDFV schemes of~\cite{CaGui:17} and~\cite{CCHKr:18,CCHHK:20}. This scheme is devised so as to ensure the positivity of discrete solutions, as well as to preserve at the discrete level the entropy/dissipation structure (and the long-time behaviour) of the model. The choice of designing a nonlinear HFV scheme is driven by the prospect of the design of hybrid high-order (HHO) schemes which could have similar features. 

\subsubsection{Definition of the scheme and key properties of discrete solutions}

In the sequel, a vector of discrete unknowns $\v_\D \in \V_\D$ will be called positive if and only if, for all $K \in \M$ and all $\s \in \E$, $v_K  > 0$ and $v_\s > 0$.
Recall the definition~\eqref{def:ome} of $\omega=\e^{-\phi}$, as well as the definition~\eqref{def-omega} of the HFV interpolate $\underline{\omega}_\D\in\V_\D$ of $\omega$. Remark that $\underline{\omega}_\D$ is positive.
If $\u_\D\in\V_\D$ is positive, one can then define $\w_\D$ as the element of $\V_\D$ such that
\begin{equation} \label{def:g}
  w_K\defi\log\left(\frac{u_K}{\omega_K}\right)\quad\forall K\in\M,\qquad w_\s\defi\log\left(\frac{u_\s}{\omega_\s}\right)\quad\forall\s\in\E.
\end{equation}
In what follows, to emphasise the dependency of $\w_\D$ upon $\u_\D$, we sometimes write $\w_\D(\u_\D)$.
Locally to any cell $K\in\M$, we define an approximation of
\begin{equation*}
 (u,v) \mapsto -\int_K J\cdot\nabla v= \int_K u \,\Lambda\nabla\log\left(\frac{u}{u^\infty}\right) \cdot  \nabla v
\end{equation*}
under the form
$${T_K(\u_K,\w_K,\v_K)\defi\int _K r_K(\u_K) \,\Lambda\nabla_K \w_K \cdot \nabla_K \v_K},$$
for all $\u_K\in\V_K$ positive and all $\v_K\in\V_K$, where {$r_K :\big(\V_K\big)_+^\star \to \R^\star_+$} is a local reconstruction operator.
Since $r_K(\u_K)$ is a (positive) constant on $K$, we have
\begin{equation} \label{eq:TK}
  T_K(\u_K,\w_K,\v_K)=r_K(\u_K) a^\Lambda_K\big(\w_K,\v_K\big),
\end{equation}
where $a^\Lambda_K$ is defined by~\eqref{def:aKL}. Following~\eqref{def:a_K}, one can equivalently reformulate~\eqref{eq:TK} using the local matrix $\A_K$ defined by~\eqref{def:flux:A}:
\begin{equation} \label{eq:TK-AK}
  T_K(\u_K,\w_K,\v_K)=r_K(\u_K)\,\delta_K\v_K\cdot\A_K\delta_K\w_K.
\end{equation}
As already pointed out in the analysis of the nonlinear DDFV scheme of \cite{CCHKr:18,CCHHK:20}, the definition of the local reconstruction operator is crucial to guarantee the existence of solutions and a good long-time behaviour to the scheme.
The most natural choice in the HFV context would obviously be $r_K(\u_K) = u_K$, however it turns out that such a reconstruction embeds too few information on $\u_K$ to conclude, as already suggested in \cite{Cances:18}. Therefore, we use a richer reconstruction, described below, which embeds information from both the local cell and face unknowns.
For $\u_K \in \V_K$ positive, we let
\begin{equation} \label{eq:rK}
  r_K (\u_K) \defi f_{|\E_K|} \left ( \big(m (u_K , u_\s) \big)_{\s \in \E_K} \right ),
\end{equation}
with {$m : (\R_+^\star)^2 \to \R ^\star_+ $} and, for $k\geq 1$ integer, $f_k :(\R_+^\star )^k  \to  \R_+^\star$, such that 
\begin{subequations}\label{hyp:g}
        \begin{align}
            & \mbox{$m$ is non-decreasing with respect to both its variables,}\label{hyp:g:monotony}\\
          & \mbox{$m(x,x)=x$ for all $x\in \R_+^\star$ and $m(y,x)=m(x,y)$ for all $(x,y)\in ( \R_+^\star)^2$,}\label{hyp:g:cons+cons}\\
            &\mbox{$m(\lambda x,\lambda y)=\lambda m(x,y)$  for all $\lambda >0$ and all $(x,y)\in( \R_+^\star )^2$,}\label{hyp:g:homogeneity}\\
            &\mbox{$\displaystyle\frac{y-x}{\log (y)-\log (x)} \leq m(x,y)\leq \max(x,y)$ for all $(x,y)\in ( \R_+^\star )^2$, $x\neq y$, \label{hyp:g:bounds}}
        \end{align}
\end{subequations}
and 
\begin{equation} \label{def:f}
		f_k (x_1, \ldots, x_k ) = \frac{1}{k} \sum_{i =1 } ^k x_i \qquad\text{ or  }\qquad f_k (x_1, \ldots ,x_k ) = \max (x_1, \ldots, x_k ).
\end{equation}
{Note that, for all $(x,y) \in ( \R_+^\star )^2$, one has 
\[
	\frac{y-x}{\log(y) - \log(x)} \leq \left ( \frac{\sqrt{x} + \sqrt{y}}{2} \right )^2 \leq \frac{x+y}{2} \leq \max(x,y), 
\]
and each expression of the previous sequence is a mean function $m$ satisfying the properties \eqref{hyp:g}.}
Heuristically, $r_K(\u_K)$ computes an average of the unknowns attached to the cell $K$, especially it contains information about all the local face unknowns.
As far as the properties~\eqref{hyp:g}-\eqref{def:f} are concerned, they will be instrumental to prove Lemma~\ref{lemma:positivity} and Proposition~\ref{prop:Fisher} below.
As now standard, we finally let $T_\D$ be such that, for all $\u_\D\in\V_\D$ positive, and all $\v_\D\in\V_\D$,
\begin{equation} \label{sch2}
  T_\D(\u_\D,\w_\D,\v_\D)\defi\sum_{K\in\M}T_K(\u_K,\w_K,\v_K),
\end{equation}
where the local contributions $T_K$ are defined by~\eqref{eq:TK}.

Using a backward Euler discretisation in time, and the HFV discretisation we have just introduced in space, our discrete problem reads: Find $\big(\u_\D^n\in\V_\D\big)_{n\geq 1}$ such that
\begin{subequations}\label{sch:nonlin}
        \begin{empheq}[left = \empheqlbrace]{align}
	            \frac{1}{\Delta t } 
    	       (u^{n}_\M - u^{n-1}_\M, v_\M)_{\Omega} 
	            +  T_\D (\u_\D^{n}, \w_\D(\u_\D^{n}), \v_\D)
	             &= 0\qquad\qquad\qquad\,\forall\v_{\D}\in\V_{\D},\label{sch1}  \\
            u^0_K &= \frac{1}{|K|}\int_K u^{in } \qquad\forall K\in\M. \label{sch4} 
        \end{empheq}
\end{subequations}
Notice that if $(\u_\D^n)_{n\geq 1}$ solves Problem~\eqref{sch:nonlin}, then, necessarily, $\u_\D^n$ is positive for all $n\geq 1$. Therefore, in the sequel, we will speak about the positive solutions to~\eqref{sch:nonlin}. Notice also that $u^0_\M$ may vanish in some cells of the mesh, since we only impose that $u^{in}\geq 0$ (but $u^0_\M$ cannot be identically zero in $\Omega$ since $M>0$). Notice finally that $u^0_\E$ needs not be defined, as the scheme only uses $u^0_\M$.

Testing~\eqref{sch1} with $\v_\D=\underline{1}_\D$, and remarking that $T_\D(\u_\D^n,\w_\D(\u_\D^{n}),\underline{1}_\D)=0$ for all $n\geq 1$, we immediately infer the following discrete mass conservation property.
\begin{prop}[Mass conservation]\label{prop:mass}
  If $\big(\u^n_\D\in\V_\D\big) _{n \geq 1}$ is a (positive) solution to~\eqref{sch:nonlin}, then
  $$\forall n \in \mathbb{N}^\star, \qquad\int_\Omega u^{n}_\M = \int_\Omega u^0_\M = \int_\Omega u^{in} = M.$$
\end{prop}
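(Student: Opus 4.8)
The plan is to test the discrete equation~\eqref{sch1} against the constant vector $\underline{1}_{\D}\in\V_{\D}$, and to exploit the fact that the discrete advection-diffusion operator annihilates constants. First I would observe that, locally to any cell $K\in\M$, the flux term vanishes on $\underline{1}_{\D}$. Indeed, the restriction of $\underline{1}_{\D}$ to $K$ satisfies $\delta_K\underline{1}_{\D}=\big(1-1\big)_{\s\in\E_K}=0$, so that reformulation~\eqref{eq:TK-AK} immediately gives $T_K(\u_K^n,\w_K(\u_\D^n),\underline{1}_{\D})=r_K(\u_K^n)\,\delta_K\underline{1}_{\D}\cdot\A_K\delta_K\w_K(\u_\D^n)=0$. (Equivalently, one may argue from~\eqref{eq:TK} and~\eqref{def:aKL} that $a_K^{\Lambda}(\w_K,\underline{1}_{\D})=(\Lambda\nabla_K\w_K,\nabla_K\underline{1}_{\D})_K=0$, since the HFV discrete gradient of a constant vector vanishes, both its consistent part $G_K\underline{1}_{\D}=\frac{1}{|K|}\sum_{\s\in\E_K}|\s|\,n_{K,\s}=0$ and its stabilisation being zero.) Summing over $K\in\M$ through~\eqref{sch2}, I would conclude that $T_\D(\u_\D^n,\w_\D(\u_\D^n),\underline{1}_{\D})=0$ for all $n\geq 1$; here the positivity of $\u_\D^n$ is what makes $\w_\D(\u_\D^n)$ well-defined via~\eqref{def:g}.

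It then remains to identify the accumulation term: $(u_\M^n-u_\M^{n-1},\underline{1}_{\D})_{\Omega}=\sum_{K\in\M}|K|\,(u_K^n-u_K^{n-1})=\int_{\Omega}u_\M^n-\int_{\Omega}u_\M^{n-1}$. Hence, choosing $\v_\D=\underline{1}_{\D}$ in~\eqref{sch1} reduces the whole equation to $\int_{\Omega}u_\M^n=\int_{\Omega}u_\M^{n-1}$ for every $n\geq 1$. A straightforward induction on $n$ yields $\int_{\Omega}u_\M^n=\int_{\Omega}u_\M^0$. Finally, I would invoke the initialisation~\eqref{sch4} to compute $\int_{\Omega}u_\M^0=\sum_{K\in\M}|K|\,u_K^0=\sum_{K\in\M}\int_K u^{in}=\int_{\Omega}u^{in}=M$, which closes the chain of equalities claimed in the statement.

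I do not expect any genuine obstacle here: the argument is entirely routine once the structural identity is in place. The only point deserving care—and the crux of the proof—is the verification that the discrete flux functional $T_\D(\cdot,\cdot,\underline{1}_{\D})$ vanishes, which is precisely the discrete counterpart of the conservativity of the scheme (the discrete gradient reconstructs constants exactly as the zero field). Everything else, namely the telescoping of the time-accumulation term and the induction, is immediate.
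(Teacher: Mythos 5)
Your proof is correct and follows exactly the paper's argument: the paper likewise tests~\eqref{sch1} with $\v_\D=\underline{1}_\D$ and observes that $T_\D(\u_\D^n,\w_\D(\u_\D^n),\underline{1}_\D)=0$, which is immediate from $\delta_K\underline{1}_\D=0$ in~\eqref{eq:TK-AK}. The telescoping in $n$ and the identification $\int_\Omega u^0_\M=\int_\Omega u^{in}=M$ via~\eqref{sch4} are exactly as in the paper.
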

\noindent
Following Proposition~\ref{prop:mass}, a discrete steady-state $\u^\infty_\D\in\V_\D$ of~\eqref{sch:nonlin} shall satisfy
\begin{equation} \label{def:nonlin:sta}
  T_\D (\u^\infty_\D, \w_\D(\u^\infty_\D), \v_\D) = 0\qquad\forall\v_\D\in\V_\D,
\end{equation}
and $\int_{\Omega}u^\infty_\M=M$. Letting $\w_\D^\infty\defi\w_\D(\u_\D^\infty)$, and testing~\eqref{def:nonlin:sta} with $\v_\D=\w_\D^\infty$, by~\eqref{eq:TK} and~\eqref{sch2}, since $r_K(\u_K^\infty)>0$ for all $K\in\M$, we necessarily have $a_\D^\Lambda(\w_\D^\infty,\w_\D^\infty)=0$, which yields $|\w_\D^\infty|_{1,\D}=0$ by the coercivity property~\eqref{def:globalcoercivity}. Hence, $\w_\D^\infty=c\,\underline{1}_\D$ for some constant $c\in\R$, and since $\int_{\Omega}u_\M^\infty=M$, by~\eqref{def:g}, we necessarily have $\e^c=\rho^M$, that is $c=\log(\rho^M)$ and $\w_\D^\infty=\log(\rho^M)\underline{1}_\D$. As a consequence, again by~\eqref{def:g}, $\u_\D^\infty=\rho^M\underline{\omega}_\D$, i.e., $\u_\D^\infty$ is the HFV interpolate of $u^\infty_{th}$. Thus, just like the exponential fitting scheme (cf.~Remark~\ref{rem:thermeq}), the nonlinear scheme preserves the thermal equilibrium. We notice that $\w_\D$, first defined by \eqref{def:g}, can actually be modified up to an additive constant without any impact on the scheme \eqref{sch:nonlin}. Hence, we can redefine $\w_\D$ as
\begin{equation} \label{sch3:w}
  w_K\defi\log\left(\frac{u_K}{u_K^\infty}\right)\quad\forall K\in\M,\qquad w_\s\defi\log\left(\frac{u_\s}{u_\s^\infty}\right)\quad\forall\s\in\E.
\end{equation}
Another important consequence of the fact that $\u_\D^\infty$ is the HFV interpolate of $u^\infty_{th}$ is the following. Letting {$u^\infty_\flat\defi\frac{M\omega_\flat}{|\Omega|\omega_\sharp}>0$ and $u^\infty_\sharp\defi\frac{M\omega_\sharp}{|\Omega|\omega_\flat}>0$} (recall that $\omega_\flat$ and $\omega_\sharp$ only depend on $\phi$ and $\Omega$), we have $u^\infty_\flat\leq u^\infty_{th}\leq u^\infty_\sharp$, but we also have that
\begin{equation} \label{bornesstationnaires}
  u^\infty_\flat\underline{1}_\D \leq \u^\infty_\D \leq u^\infty_\sharp\underline{1}_\D,
\end{equation}
where the inequalities shall be understood coordinate-wise. {In other words, the continuous bounds on the steady-state are transferred to the discrete level.}

Given a (positive) solution $\big(\u^n_\D\in\V_\D\big)_{n \geq 1}$ to~\eqref{sch:nonlin}, we define the following discrete versions of the relative entropy and dissipation introduced in~\eqref{nonlin:entro}: for all $n\geq 1$,
\begin{equation} \label{entro1}
		\N ^n \defi \int_\Omega u^\infty_\M \Phi_1 \left ( \frac{u^n_\M}{u^\infty_\M} \right ) \qquad\text{ and }\qquad
		\Diss^n \defi T_\D \big(\u_\D^n, \w_\D^n , \w_\D^n\big),
\end{equation}
where we let $\w_\D^n\defi\w_\D(\u_\D^n)$, and we recall that $\Phi_1 (s) = s \log (s) - s + 1$ for all $s>0$. Notice that $\N ^n \geq 0$ for all $n\geq 1$ since $\Phi_1\geq 0$. {For further use, we extend the function $\Phi_1$ by continuity to 0, letting $\Phi_1(0)=1$. We can then define, in case there exists $K\in\M$ such that $u^0_K=0$, $\N^0\geq 0$ according to~\eqref{entro1}.} As far as $\Diss^n$ is concerned, by~\eqref{eq:TK} and~\eqref{sch2}, for all $n\geq 1$, we have 
$$\Diss^n = \sum _{K \in \M } r_K(\u_K^n) a_K^\Lambda(\w_K^n,\w_K^n) \geq 0.$$
We can now establish the following discrete counterpart of~\eqref{nonlin:relation}.
\begin{prop}[Entropy dissipation]\label{prop:dissip}
	If $\big(\u^n_\D\in\V_\D\big) _{n \geq 1}$ is a (positive) solution to~\eqref{sch:nonlin}, then
	\begin{equation} \label{diss:nonlin}
		\forall n \in \mathbb{N}, \qquad \frac{\N^{n+1} - \N ^n }{\Delta t} + \Diss^{n+1} \leq 0. 
	\end{equation}
\end{prop}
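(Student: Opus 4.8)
The plan is to bound the entropy increment $\N^{n+1}-\N^n$ from above by a single linear term using the convexity of $\Phi_1$, and then to identify that term---after testing the scheme by the discrete logarithm $\w_\D^{n+1}$---with $-\Delta t\,\Diss^{n+1}$. This is exactly the discrete transcription of the continuous computation leading to~\eqref{nonlin:relation}, where one tests the equation against $\log(u/u^\infty)$.

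First I would exploit that $\Phi_1(s)=s\log(s)-s+1$ is convex with $\Phi_1'(s)=\log(s)$, which yields the tangent-line (gradient) inequality $\Phi_1(a)\geq\Phi_1(b)+\Phi_1'(b)(a-b)$ for every $a\geq 0$ and $b>0$. Applying it cellwise with $a=u^n_K/u^\infty_K$ and $b=u^{n+1}_K/u^\infty_K$, and recalling the redefinition~\eqref{sch3:w} which gives $w^{n+1}_K=\log(u^{n+1}_K/u^\infty_K)$, one obtains after multiplication by $u^\infty_K$
\[
  u^\infty_K\Phi_1\!\left(\frac{u^{n+1}_K}{u^\infty_K}\right)-u^\infty_K\Phi_1\!\left(\frac{u^n_K}{u^\infty_K}\right)\leq (u^{n+1}_K-u^n_K)\,w^{n+1}_K.
\]
Multiplying by $|K|$ and summing over $K\in\M$, the left-hand side is precisely $\N^{n+1}-\N^n$ by definition~\eqref{entro1}, while the right-hand side is the inner product $(u^{n+1}_\M-u^n_\M,\,w^{n+1}_\M)_{\Omega}$ (recall that $(\cdot,\cdot)_{\Omega}$ evaluates to $\sum_{K\in\M}|K|\,\cdot\,\cdot$ on piecewise-constant functions). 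Hence $\N^{n+1}-\N^n\leq (u^{n+1}_\M-u^n_\M,\,w^{n+1}_\M)_{\Omega}$.

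Next I would test the scheme~\eqref{sch1}, written at time step $n+1$, with the admissible test function $\v_\D=\w_\D^{n+1}=\w_\D(\u_\D^{n+1})\in\V_\D$; note that only the cell components of $\w_\D^{n+1}$ survive in the discrete time-derivative pairing, the face components entering solely through $T_\D$. This produces $\tfrac{1}{\Delta t}(u^{n+1}_\M-u^n_\M,\,w^{n+1}_\M)_{\Omega}+T_\D(\u_\D^{n+1},\w_\D^{n+1},\w_\D^{n+1})=0$, where the second term equals $\Diss^{n+1}$ by~\eqref{entro1}. Dividing the entropy bound above by $\Delta t>0$ and substituting this identity yields $\frac{\N^{n+1}-\N^n}{\Delta t}\leq -\Diss^{n+1}$, which is the claim.

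The proof is structurally short, the real work being done by the pairing of the convex entropy with its own derivative $\w_\D^{n+1}$; the only delicate point is the case $n=0$, where $u^0_K$ may vanish in some cells. There one invokes the continuous extension $\Phi_1(0)=1$ so that $\N^0$ is well defined, and checks that the tangent-line inequality survives at $a=0$ (it then reduces to $-u^{n+1}_K/u^\infty_K\leq 0$, which holds since $u^{n+1}_K,u^\infty_K>0$), so that the bound is valid for all $n\in\mathbb{N}$.
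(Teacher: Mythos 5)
Your proof is correct and follows essentially the same path as the paper's: the convexity (tangent-line) inequality for $\Phi_1$ to bound $\N^{n+1}-\N^n$ by $(u^{n+1}_\M-u^n_\M,w^{n+1}_\M)_\Omega$, followed by testing~\eqref{sch1} with $\v_\D=\w_\D^{n+1}$ and identifying $T_\D(\u_\D^{n+1},\w_\D^{n+1},\w_\D^{n+1})=\Diss^{n+1}$. Your explicit check of the degenerate case $n=0$ (where some $u^0_K$ may vanish) is a welcome extra detail consistent with the paper's convention $\Phi_1(0)=1$.
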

\begin{proof}
  Let $n\in\mathbb{N}$. By the expression~\eqref{entro1} of the discrete relative entropy, and the convexity of $\Phi_1$, we have
\[
	\N ^{n+1} - \N ^n 
	\leq \sum_{K\in\M}|K|u^\infty _K  \Phi_1' \left ( \frac{u^{n+1}_K}{u^\infty_K} \right )\left (\frac{u^{n+1}_K - u^n _K}{u^\infty_K}  \right ) .
\]
Thus, by~\eqref{sch3:w}, we get
\[
	\N ^{n+1} - \N ^n 
		\leq  \int_{\Omega}  \left ( u^{n+1}_\M - u^{n}_\M \right ) \log \left ( \frac{u^{n+1}_\M}{u^\infty _\M } \right )  
		=  (u^{n+1}_\M - u^n_\M, w^{n+1}_\M)_{\Omega} .
\]
By~\eqref{sch1} and~\eqref{entro1}, we finally infer
\[ \N ^{n+1} - \N ^n \leq - \Delta t \,T_\D (\u_\D^{n+1}, \w_\D^{n+1},\w_\D^{n+1}) = - \Delta t \,\Diss^{n+1}, 
\]
which yields~\eqref{diss:nonlin}.
\end{proof}

We finally state the main result of Section~\ref{sse:nonlin}, about the existence of (positive) solutions to the nonlinear scheme~\eqref{sch:nonlin}.
The proof of this result is the subject of the next subsection.

\begin{theorem}[Existence of positive solutions] \label{th:existencenonlin}
Let $u^{in} \in L^2(\Omega)$ be a non-negative function such that $\int_\Omega u^{in} = M > 0$.
There exists at least one positive solution $\big(\u_\D^n\in\V_\D)_{n \geq 1}$ to the nonlinear scheme~\eqref{sch:nonlin}.
Moreover, there exists $\varepsilon >0$, depending on $\Lambda$, $\phi$, $u^{in}$, $M$, $\Omega$, $d$, $\Delta t$, and $\D$ such that
\begin{equation} \label{eq:unifpos}
  \forall n \geq 1, \qquad u_K^n\geq\varepsilon\quad\forall K\in\M \quad\text{ and }\quad u_\s^n\geq\varepsilon \quad\forall\s\in\E.
\end{equation}
\end{theorem}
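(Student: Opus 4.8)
\textbf{Overall approach.} The plan is to prove existence of a positive solution one time step at a time, by induction on $n$. Given $\u_\D^{n-1}\in\V_\D$ (with $u^{n-1}_\M\geq 0$ and $\int_\Omega u^{n-1}_\M=M>0$ for $n\geq 1$, and $\u^0_\M$ as prescribed by~\eqref{sch4}), I would establish existence of a positive $\u_\D^n\in\V_\D$ solving~\eqref{sch1}. Since the map $\u_\D\mapsto\w_\D(\u_\D)$ is only defined on positive vectors and blows up as any coordinate tends to $0$, the cleanest route is a topological degree / fixed-point argument applied to a \emph{regularised} or \emph{truncated} problem, followed by a priori bounds that push the solution away from the boundary of the positive cone. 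Concretely, I would introduce a parameter $\varepsilon>0$ and a truncation that confines the unknowns to a compact box $[\varepsilon,1/\varepsilon]^{|\M|+|\E|}$, solve the truncated problem by Brouwer's fixed-point theorem (the truncated nonlinearity is continuous on a compact convex set), and then show the solution of the truncated problem lies strictly inside the box, hence solves the original problem.

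\textbf{Key steps in order.} First, I would fix $n$ and rewrite~\eqref{sch1} as a nonlinear system $\Psi(\u_\D^n)=0$ in the unknowns $(u_K^n)_{K\in\M}$ and $(u_\s^n)_{\s\in\E}$, treating $\u_\D^{n-1}$ as data. Second, I would derive the fundamental a priori estimate from the entropy–dissipation inequality of Proposition~\ref{prop:dissip}: summing~\eqref{diss:nonlin} shows $\N^n\leq\N^{n-1}\leq\cdots\leq\N^0<\infty$, which via the coercivity~\eqref{def:globalcoercivity} and the lower bound $r_K(\u_K)\geq$ (a positive reconstruction) in $\Diss^n$ controls $|\w_\D^n|_{1,\D}$ and hence bounds $\w_\D^n$ in $\ell^\infty$ up to its additive constant. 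Combined with mass conservation (Proposition~\ref{prop:mass}), which fixes $\int_\Omega u^n_\M=M$ and thereby pins down the free additive constant in $\w_\D^n$, this yields two-sided bounds $\varepsilon\leq u_K^n,u_\s^n\leq 1/\varepsilon$ for a uniform $\varepsilon>0$ depending only on the stated quantities. The bound~\eqref{bornesstationnaires} on the steady-state and the explicit constants $u^\infty_\flat,u^\infty_\sharp$ would enter here to make the dependence of $\varepsilon$ explicit. Third, with these a priori bounds in hand, I would run the fixed-point/degree argument on the truncated system and verify that the truncation is never active, yielding a genuine positive solution $\u_\D^n$.

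\textbf{The main obstacle.} The crux is the lower bound on the cell and face unknowns, i.e.\ establishing~\eqref{eq:unifpos}, and in particular transferring a control of the \emph{difference} $\w_\D^n$ (which is all that $|\w_\D^n|_{1,\D}$ and the dissipation naturally bound) into a genuine two-sided pointwise bound. The seminorm $|\cdot|_{1,\D}$ only controls oscillations, so one coordinate of $\w_\D^n$ being bounded must be leveraged; the natural anchor is mass conservation, which forces $\int_\Omega u^n_\M=M>0$ and so prevents $\w_\D^n$ from drifting uniformly to $-\infty$. The delicate point is that the entropy bound degrades as $\Delta t\to 0$ through $\N^0$ and through the accumulated dissipation, which is precisely why $\varepsilon$ is allowed to depend on $\Delta t$, on $\D$, and on the data. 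A secondary subtlety is that $\N^0$ may be infinite-looking if $u^0_K=0$ in some cells; the extension $\Phi_1(0)=1$ fixes this, keeping $\N^0$ finite, and the subsequent positivity at step $n=1$ then propagates by induction. The properties~\eqref{hyp:g}–\eqref{def:f} of the reconstruction $r_K$, especially the lower bound in~\eqref{hyp:g:bounds}, are what guarantee $r_K(\u_K^n)$ stays bounded below by a positive multiple of the relevant unknowns, making the dissipation coercive enough to close the estimate.
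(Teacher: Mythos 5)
Your overall architecture (regularise or truncate, solve by a fixed-point or degree argument, then use entropy--dissipation and mass conservation to obtain uniform positivity and remove the regularisation) matches the paper's in spirit, and you correctly identify the crux: converting control of the dissipation plus the mass constraint into a two-sided pointwise bound. The paper, however, avoids truncation altogether: it changes variables to $\w_\D$ via $\u_\D=\u_\D^\infty\times{\rm exp}(\w_\D)$, so that positivity is automatic and the unknown ranges over all of $\V_\D$, and it regularises by adding $\mu\,\w_\D$ to the residual (see~\eqref{def:P_esp}); existence for $\mu>0$ then follows from the corollary of Brouwer's theorem in Lemma~\ref{Lemme:exist} (a continuous vector field satisfying $P(x)\cdot x\geq 0$ on a sphere has a zero inside), the radius of the sphere being furnished by the entropy inequality, and the limit $\mu\to 0$ is taken after the uniform bounds of Lemma~\ref{lemma:positivity}.

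Two points in your sketch are genuine gaps rather than omitted routine detail. First, ``the truncated nonlinearity is continuous on a compact convex set'' does not by itself yield a zero of the residual: you must either exhibit a continuous self-map of the box whose fixed points solve the truncated system, or invoke a degree or boundary argument. More seriously, the a priori bounds you rely on to show the truncation is never active (entropy dissipation, mass conservation) are derived for solutions of the \emph{untruncated} scheme; you must check that they survive the truncation. For instance, the convexity step $\N^{n+1}-\N^n\leq\big(u^{n+1}_\M-u^n_\M,\log(u^{n+1}_\M/u^\infty_\M)\big)_\Omega$ uses $\Phi_1'=\log$ exactly, and replacing $\log$ by a truncated logarithm breaks this inequality in general; this circularity is precisely what the paper's change of variables is designed to bypass. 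Second, the claim that the dissipation bound ``controls $|\w_\D^n|_{1,\D}$'' is not correct: since $r_K(\u_K^n)$ admits no a priori positive lower bound, $\Diss^n\leq C$ only yields, via~\eqref{hyp:g:bounds}, the face-wise estimate $(\e^{w_K}-\e^{w_\s})(w_K-w_\s)\leq \zeta h_K^{2-d}$, after which one must propagate $\ell^\infty$ bounds from a single mass-anchored cell $K_0$ through the connectivity of the mesh (this is the content of Lemma~\ref{lemma:positivity}); the seminorm $|\w_\D^n|_{1,\D}$ is never bounded in this argument, and $\varepsilon$ inherits its dependence on $\D$ from the number of propagation steps.
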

\noindent
The uniform-in-time positivity result~\eqref{eq:unifpos} on discrete solutions is the equivalent in the HFV context of~\cite[Lemma 3.7]{CaGui:17} and~\cite[Lemma 3.5]{CCHKr:18} obtained, respectively, in the VAG and DDFV contexts.

\subsubsection{Existence of discrete solutions} \label{ssse:exist}

The existence of discrete solutions to the nonlinear scheme~\eqref{sch:nonlin} is proved in two steps. First, we introduce a regularised scheme, for which we prove the existence of solutions by a fixed-point argument, {inspired from the proof of existence in~\cite{BKS:14}}. Then, we prove that sequences of regularised solutions satisfy uniform a priori bounds, which allows us to pass to the limit in the regularisation parameter.
{Notice that our proof of existence uses the same estimates, but follows a quite different path than the ones in the VAG~\cite{CaGui:17} and DDFV~\cite{CCHKr:18} contexts, in which the proof is based on the topological degree, together with a monotonicity argument.}
Henceforth, we reason in the $\w_\D$ variable, and we recall that $\u_\D=\u_\D^\infty\times{\rm exp}(\w_\D)$ according to~\eqref{sch3:w}. The advantage of doing so is that we can seek for solutions $\w_\D$ in the whole space $\V_\D$, with bijective correspondence with solutions $\u_\D$ that are automatically positive. Recalling the definition~\eqref{entro1} of the discrete relative entropy and dissipation, and using~\eqref{sch2} combined with~\eqref{eq:TK-AK}, we let, for all $\w_\D\in\V_\D$,
\begin{equation} \label{ED}
  \N(w_\M) \defi \sum_{K\in\M}|K|u^\infty_K\Phi_1(\e^{w_K}),\qquad\Diss(\w_\D) \defi \sum_{K \in \M} r_K \big(\u^\infty_K\times{\rm exp}(\w_K)\big)\, \delta_K \w_K \cdot\A_K \delta_K \w_K,
\end{equation}
in such a way that $\N^n=\N\big(w_\M(u_\M^n)\big)$ and $\Diss^n=\Diss\big(\w_\D(\u_\D^n)\big)$ for all $n\geq 1$. Using the fact that $\Phi_1(0)=1$, we extend the definition of $\N(w_\M)$ to the case where some $w_K$'s are equal to $-\infty$.

Before proceeding with the proof of Theorem~\ref{th:existencenonlin}, we state two preliminary lemmas. The first one, that can be found, e.g., in~\cite[Section 9.1]{Evans:10}, is a corollary of Brouwer's fixed-point theorem. This result is instrumental to show the existence of solutions to the regularised scheme.
\begin{lemma} \label{Lemme:exist}
  Let $N \in \mathbb{N}^\star$, and let $P : \R ^N \to \R^N $ be a continuous vector field. Assume that there is $r > 0$ such that 
  \begin{equation*}
    P(x) \cdot x \geq 0 \qquad\text{ if \;}  |x| = r. 
  \end{equation*}
  Then, there exists a point $x_0 \in \R^N$ such that $P(x_0) =0$ and $|x_0| \leq r$.
\end{lemma}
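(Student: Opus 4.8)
The plan is to argue by contradiction and to invoke Brouwer's fixed-point theorem, which I take as granted (it is the ``heavy lifting'' behind this corollary). Suppose that the conclusion fails; since it only asserts the existence of a zero of $P$ in the closed ball $\overline{B}_r\defi\{x\in\R^N:|x|\leq r\}$, this amounts to assuming that $P(x)\neq 0$ for every $x\in\overline{B}_r$. Under this standing assumption, I would introduce the map
$$
g:\overline{B}_r\to\overline{B}_r,\qquad g(x)\defi -r\,\frac{P(x)}{|P(x)|}.
$$
It is well-defined because the denominator never vanishes on $\overline{B}_r$, and it is continuous as the composition of the continuous field $P$ with $x\mapsto -r\,x/|x|$, which is continuous away from the origin (an origin that $P$ avoids on $\overline{B}_r$ by assumption). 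Moreover $|g(x)|=r$ for every $x$, so $g$ maps $\overline{B}_r$ into itself, in fact onto the sphere $\{x\in\R^N:|x|=r\}$.

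Next I would apply Brouwer's fixed-point theorem to the continuous self-map $g$ of the nonempty, compact, convex set $\overline{B}_r$, obtaining a point $x_0\in\overline{B}_r$ with $g(x_0)=x_0$. Because $|g(x_0)|=r$, this fixed point necessarily lies on the boundary sphere, i.e.\ $|x_0|=r$. The contradiction is then extracted by pairing the fixed-point identity $x_0=-r\,P(x_0)/|P(x_0)|$ with $x_0$ in the Euclidean inner product:
$$
r^2=|x_0|^2=x_0\cdot x_0=-r\,\frac{P(x_0)\cdot x_0}{|P(x_0)|},
$$
which rearranges into $P(x_0)\cdot x_0=-r\,|P(x_0)|<0$, the strict sign coming from $P(x_0)\neq 0$.

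This is exactly the contradiction sought: since $|x_0|=r$, the hypothesis of the lemma forces $P(x_0)\cdot x_0\geq 0$, in conflict with $P(x_0)\cdot x_0<0$. Hence the standing assumption cannot hold, so $P$ must vanish at some $x_0\in\overline{B}_r$, which yields the claimed point with $P(x_0)=0$ and $|x_0|\leq r$. I do not anticipate any genuine analytical obstacle here, as Brouwer's theorem does the essential work; the only points deserving care are verifying that $g$ is well-defined and continuous (both resting entirely on the contradiction hypothesis that $P$ does not vanish on $\overline{B}_r$), and the observation that every fixed point of $g$ automatically has norm exactly $r$, which is precisely what allows the boundary sign condition on $P$ to be brought to bear.
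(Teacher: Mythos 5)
Your proof is correct and is precisely the standard argument behind this lemma, which the paper does not reprove but simply cites from Evans (Section 9.1): assume $P$ has no zero in the closed ball, apply Brouwer's fixed-point theorem to $x\mapsto -r\,P(x)/|P(x)|$, and derive the sign contradiction on the boundary sphere. There is nothing to add.
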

\noindent
The second lemma, whose proof is postponed until Appendix~\ref{ap:lemma}, establishes sufficient boundedness conditions on the discrete mass and (relative) dissipation so that a priori bounds hold for vectors of discrete unknowns. This result is instrumental to show that sequences of regularised solutions satisfy (uniform) a priori bounds.
\begin{restatable}{lemma}{lem} \label{lemma:positivity}
	Let $\w_\D \in \V_\D$, and assume that there exist {$C_\sharp > 0$, and $M_\sharp \geq M_\flat >0$} such that
	\begin{equation} \label{Entropycontrol}
		M_\flat\leq\sum_{K\in\M}|K|u_K^\infty \e^{w_K} \leq M_\sharp \qquad\text{ and } \qquad\Diss(\w_\D) \leq C_\sharp.
	\end{equation}
 	Then, there exists $C > 0$, depending on $\Lambda$, $u^\infty_\flat$, $u^\infty_\sharp$, $M_\flat$, $M_\sharp$, $C_\sharp$, $\Omega$, $d$, and $\D$ such that 
	$$|w_K|\leq C\quad\forall K\in\M \quad\text{ and }\quad |w_\s|\leq C \quad\forall\s\in\E.$$
\end{restatable}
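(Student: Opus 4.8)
The plan is to reduce everything to two-sided pointwise bounds $0<\varepsilon_0\leq u_X\leq U_0$ on the quantities $u_K\defi u^\infty_K\e^{w_K}$ and $u_\s\defi u^\infty_\s\e^{w_\s}$ (for $K\in\M$, $\s\in\E$): indeed, by~\eqref{bornesstationnaires} the weights $u^\infty_K,u^\infty_\s$ lie in $[u^\infty_\flat,u^\infty_\sharp]$, so such bounds are equivalent to $|w_K|,|w_\s|\leq C$. Two ingredients drive the argument: the mass control in~\eqref{Entropycontrol}, which reads $M_\flat\leq\sum_{K\in\M}|K|u_K\leq M_\sharp$; and a \emph{per-edge} consequence of the dissipation control. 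To obtain the latter, I expand $\Diss(\w_\D)$ via~\eqref{eq:TK-AK}, \eqref{def:a_K} and the coercivity~\eqref{def:globalcoercivity}, then bound the reconstruction from below by a single logarithmic mean, using that $f_k$ in~\eqref{def:f} dominates $\tfrac{1}{k}$ times any of its arguments, that~\eqref{hyp:g:bounds} gives $m(u_K,u_\s)\geq\tfrac{u_\s-u_K}{\log u_\s-\log u_K}$, and that $|\E_K|\leq d\theta_\D^2$ by~\eqref{def:boundfaces}. Keeping a single (nonnegative) term in the sum, this yields, for every cell $K$ and every $\s\in\E_K$,
\[
  \frac{u_\s-u_K}{\log u_\s-\log u_K}\,(w_K-w_\s)^2\;\leq\;\frac{d\,\theta_\D^2\,d_{K,\s}}{\lambda_\flat\alpha_\flat\,|\s|}\,C_\sharp\;\ifed\;C_5,
\]
a constant depending only on $C_\sharp$, $\Lambda$, $d$, $\Omega$ and $\D$.

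From here I would first harvest the upper bounds. Each cell satisfies $u_K\leq M_\sharp/|K|\leq U_0$ directly from the mass control, with $U_0$ depending on $M_\sharp$ and $\D$ (through $\min_K|K|>0$). To bound a face value $u_\s$ from above, I apply the displayed per-edge estimate on a pair $(K,\s)$ with $\s\in\E_K$: writing $\ell=\log(u_\s/u_K)$ and using $w_K-w_\s=\log(u_K/u_\s)-\log(u^\infty_K/u^\infty_\s)$ with $|\log(u^\infty_K/u^\infty_\s)|\leq B\defi\log(u^\infty_\sharp/u^\infty_\flat)$, the left-hand side grows like $u_\s\,\ell$ as $u_\s\to\infty$, so the bound forces $u_\s\leq A_0$ for some $A_0$ depending on $U_0$, $B$ and $C_5$. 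This gives an upper bound for all the $w$'s.

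The delicate part is the lower bound. The mass control only provides one well-populated cell: since $\sum_K|K|u_K\geq M_\flat$ and $\sum_K|K|=|\Omega|$, there is $K_0\in\M$ with $u_{K_0}\geq M_\flat/|\Omega|\defi\delta_0>0$. I then propagate positivity along the mesh. Read in the regime $u_\s\to0^+$ with $u_K\geq a>0$, the same per-edge estimate (whose left-hand side then behaves like $u_K\,|\log u_\s|\to+\infty$) gives a quantitative lower bound $u_\s\geq a'$, with $a'$ depending on $a$, $B$ and $C_5$ (of the form $a\,\e^{-c/a}$); symmetrically, a face value bounded below propagates a lower bound to the adjacent cell. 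Because $\Omega$ is connected, the incidence graph pairing cells with their faces is connected, so starting from $K_0$ every cell and every face (boundary faces being reached from their unique cell) is attained in a number of steps controlled by $\D$. Composing the finitely many maps $a\mapsto a'$ yields a single $\varepsilon_0>0$ with $u_K,u_\s\geq\varepsilon_0$ everywhere; combining with the upper bounds and passing back through $w_X=\log(u_X/u^\infty_X)$ gives the claim, with $C$ depending on the announced quantities.

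The main obstacle is precisely this positivity propagation. A naive Fisher-information bound (controlling $\sum_K\tfrac{|\s|}{d_{K,\s}}(\sqrt{u_K}-\sqrt{u_\s})^2$, which the dissipation also provides) is additive and cannot prevent $u$ from vanishing, hence is useless for a lower bound. What makes the chaining work is that the logarithmic-mean lower bound in~\eqref{hyp:g:bounds} lets the dissipation control the logarithmic increments $\log(u_K/u_\s)$ — which accumulate \emph{multiplicatively} on $u$ — rather than mere differences. This is the sole place where the precise form of~\eqref{hyp:g:bounds} is used, and it is the HFV analogue of the mechanism behind~\cite[Lemma 3.7]{CaGui:17} and~\cite[Lemma 3.5]{CCHKr:18}.
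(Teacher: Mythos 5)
Your proof is correct and follows essentially the same route as the paper's: a per-edge estimate extracted from the dissipation via the logarithmic-mean lower bound~\eqref{hyp:g:bounds} on $m$, an anchor cell $K_0$ obtained from the two-sided mass control, and propagation along the connectivity of the cell--face incidence graph. The only cosmetic difference is that you chain the bounds on $u_X=u^\infty_X\e^{w_X}$ (absorbing the ratio $u^\infty_\s/u^\infty_K$ into a constant $B$), whereas the paper chains directly on the quantity $E(w_\s,w_K)=(\e^{w_K}-\e^{w_\s})(w_K-w_\s)$ after factoring out $u^\infty_\flat$ by homogeneity of $m$.
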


We can now proceed with the proof of Theorem~\ref{th:existencenonlin}. Let us first define the following inner product and corresponding norm on the space $\V_\D$: for all $\underline{z}_\D,\v_\D\in\V_\D$,
$$\langle \underline{z}_\D, \v_\D \rangle \defi \sum_{K \in \M} z_K v_K + \sum_{\s \in \E} z_\s v_\s\quad\text{ and }\quad\|\v_\D\| \defi\sqrt{\langle \v_\D,\v_\D \rangle}.$$
{Letting $N\defi|\M|+|\E|$, and identifying $\V_\D$ to $\R^N$, the inner product $\langle\cdot,\cdot\rangle$ is nothing but the standard inner product on $\R^N$.}
For all $K\in\M$, and all $\s\in\E_K$, we let, for $\u_K\in\V_K$ positive,
\begin{equation} \label{def:G:flux}
  \F_{K,\s}^{{\rm nl}}(\u_K) \defi r_K(\u_K) \sum_{\s' \in \E_K }  A_K^{\s\s'} \left ( \log \left ( \frac{u_K}{u_K^\infty} \right ) - \log \left ( \frac{u_{\s'}}{u_{\s'}^\infty} \right )   \right ),
\end{equation}
where the $A_K^{\s\s'}$ are defined by~\eqref{def:flux:A}. Combining~\eqref{eq:TK} and \eqref{sch3:w} with~\eqref{aKL-flux} and \eqref{diff.flux}, there holds that $T_K(\u_K,\w_K,\v_K)=\sum_{\s\in\E_K}\F_{K,\s}^{{\rm nl}}(\u_K)(v_K-v_\s)$ for all $\v_K\in\V_K$. In what follows, we let $n\in\mathbb{N}^\star$ and $u_\M^{n-1}\geq 0$ be given. We assume that $M^{n-1}\defi\int_{\Omega}u_\M^{n-1}>0$ and that $u_\M^{n-1}>0$ if $n>1$. We also assume that $\N(w^{n-1}_\M)>0$. If $\N(w^{n-1}_\M)=0$ (which is equivalent to $u_\M^{n-1}=u_\M^\infty$), then necessarily, by~\eqref{diss:nonlin}, $\u_\D^n=\u_\D^\infty$ uniquely solves~\eqref{sch1}. Letting, for any $\u_\D\in\V_\D$ positive, $\Gu^{n}_\D(\u_\D)$ be the element of $\V_\D$ such that
\begin{subequations}\label{def:G}
	\begin{align}
	 & \G_K^{n}(\u_\D)  \defi  |K| \frac{u_K - u_K^{n-1}}{ \Delta t } +  \sum_{\s \in \E_K}\F_{K,\s}^{{\rm nl}}(\u_K) \quad&&\forall K\in\M,  \label{def:G:mesh} \\
         & \G_\s^{n}(\u_\D) \defi - \big( \F_{K,\s}^{{\rm nl}}(\u_K) + \F_{L,\s}^{{\rm nl}}(\u_L) \big) \quad&&\forall\s=K\mid L\in\E_{int},  \label{def:G:edgeint}\\  
         & \G_\s^{n}(\u_\D) \defi - \F_{K,\s}^{{\rm nl}}(\u_K) \quad&&\forall\s\in\E_{ext}\text{ with }\M_{\s}=\{K\}, \label{def:G:edgeext}
	\end{align}
\end{subequations}
we infer that, for all $\v_\D\in\V_\D$,
\begin{equation} \label{Def:G:eq}
  \frac{1}{\Delta t } \left( u_\M - u^{n-1}_\M, v_\M \right)_\Omega +  T_\D (\u_\D, \w_\D(\u_\D),\v_\D) = \left \langle \Gu_\D^{n} (\u_\D) , \v_\D \right \rangle. 
\end{equation}
Hence, a positive vector $\u_\D^{n} \in \V_\D$ is a solution to the nonlinear equation~\eqref{sch1} if and only if $\Gu_\D^{n} (\u_\D^{n}) = \underline{0}_\D$. With this observation in hand, we now detail the two steps of the proof. \\

\noindent
{{\bf Step 1:}} Using the relation $\u_\D=\u_\D^\infty\times{\rm exp}(\w_\D)$, we define the vector field {$\underline{{\cal P}}^{n,\mu}_\D:\V_\D\to\V_\D$} such that, for all $\w_\D\in\V_\D$, 
\begin{equation}\label{def:P_esp}
	\underline{{\cal P}}^{n,\mu}_\D(\w_\D) \defi \Gu_\D^{n} \big(\u_\D^\infty \times {\rm exp}(\w_\D)\big)+\mu\,\w_\D,
\end{equation}
with $\Gu_\D^n$ defined by~\eqref{def:G} and $\mu\geq 0$.
Notice that, unlike $\Gu_\D^{n}$, the vector field $\underline{{\cal P}}^{n,\mu}_\D$ is continuous on the whole space $\V_\D$ for any $\mu\geq 0$.
If $\w_\D^{n} \in \V_\D$ satisfies $\underline{{\cal P}}_\D^{n,0} (\w_\D^{n}) =\underline{0}_\D$, then letting $\u_\D^n\defi\u_\D^\infty\times{\rm exp}(\w_\D^n)$, we have $\Gu_\D^{n} ( \u_\D^n ) = \underline{0}_\D$, therefore $\u_\D^{n}$ is a (positive) solution to~\eqref{sch1}.
For $\mu>0$, the problem of finding $\w_\D^{n,\mu}\in\V_\D$ such that $\underline{{\cal P}}_\D^{n,\mu}(\w_\D^{n,\mu})=\underline{0}_\D$ can thus be seen as a regularisation of the original problem.
By~\eqref{def:P_esp} and~\eqref{Def:G:eq}, for all $\w_\D\in\V_\D$, we have
\begin{equation} \label{Pmu}
  \begin{split}
    \left\langle  \underline{{\cal P}}_\D^{n,\mu}(\w_\D) , \w_\D \right\rangle = \sum_{K \in \M} \frac{|K|}{\Delta t} &\left (u_K^\infty \e^{w_K} - u_K ^{n-1} \right ) w_K \\&+\sum_{K \in \M } r_K\big( \u^\infty_K\times{\rm exp}(\w_K) \big) \delta_K \w_K \cdot \A_K \delta_K \w_K+ \mu \|\w_\D\|^2.
  \end{split}
\end{equation}
By~\eqref{ED}, we recognise in the second term of the right-hand side the quantity $\Diss(\w_\D)\geq 0$. As far as the first term is concerned, for $n>1$, by positivity of the $(u_K ^{n-1})_{K \in \M}$, there exist real numbers $(w_K^{n-1})_{K\in\M}$ such that $u_K ^{n-1} =  u_K^\infty \e^{w^{n-1}_K}$ for all $K\in\M$, and since $\Phi_1$ is convex,
\begin{equation} \label{Nmu}
  \begin{split}
    \sum_{K \in \M} \frac{|K|}{\Delta t} \left (u_K^\infty \e^{w_K} - u_K ^{n-1} \right ) w_K & = \sum_{K \in \M} \frac{|K|}{\Delta t}u_K^\infty w_K\big( \e^{w_K} -\e^{w^{n-1}_K}\big) \\
    & \geq \sum_{K \in \M} \frac{|K|}{\Delta t} u_K^\infty\left ( \Phi_1\big(\e^{w_K}\big) -\Phi_1\big(\e^{w^{n-1}_K}\big)\right )
    =  \frac{\N(w_\M)-\N(w^{n-1}_\M)}{\Delta t},
  \end{split}
\end{equation}
where we have used the definition~\eqref{ED} of $\N(w_\M)$. For $n=1$, now, it may happen that $u_K^0$ be zero for some $K\in\M$, and then $w^0_K$ such that $u_K^0=u_K^\infty \e^{w_K^0}$ cannot be defined. However, letting in that case $w_K^0\defi-\infty$, the inequality above still holds since $\Phi_1(0)=1$ and $\Phi_1(\e^s)-1\leq s\e^s$ for all $s\in\R$.
By non-negativity of $\N(w_\M)$, we finally infer from~\eqref{Pmu} and~\eqref{Nmu} that
$$\left\langle  \underline{{\cal P}}_\D^{n,\mu}(\w_\D) , \w_\D \right\rangle \geq \mu \|\w_\D\|^2 - \frac{\N(w_\M^{n-1})}{\Delta t},$$
so that, for $\mu>0$, there holds $\left \langle \underline{{\cal P}}_\D^{n,\mu}(\w_\D)  , \w_\D \right \rangle \geq 0$ if $\|\w_\D\| = \sqrt{ \frac{\N(w_\M^{n-1})}{\mu \Delta t}}>0$.
By Lemma~\ref{Lemme:exist}, we then conclude about the existence of solutions to the regularised scheme. There exists $\w_\D^{n,\mu} \in \V_\D$ such that 
\begin{equation}\label{def:w_eps}
  \underline{{\cal P}}_\D^{n,\mu}(\w^{n,\mu}_\D)=\underline{0}_\D \quad\text{ and }\quad \|\w_\D^{n,\mu} \| \leq \sqrt{ \frac{\N(w_\M^{n-1})}{\mu \Delta t}}.
\end{equation}

\noindent
{{\bf Step 2:}}
Since $\left \langle \underline{{\cal P}}_\D^{n,\mu}(\w^{n,\mu}_\D) , \w_\D^{n,\mu} \right \rangle = 0$, by~\eqref{Pmu} and~\eqref{Nmu}, we have
$$\frac{\N(w_\M^{n,\mu})}{\Delta t} + \Diss(\w_\D^{n,\mu}) + \mu \| \w_\D^{n,\mu} \|^2 \leq \frac{\N(w_\M^{n-1})}{\Delta t}.$$
The three terms on the left-hand side being non-negative, we infer that
\begin{equation}\label{bound:entro}
  \Diss(\w_\D^{n,\mu}) \leq C_\sharp,
\end{equation}
with $C_\sharp\defi\frac{\N(w_\M^{n-1})}{\Delta t}>0$.
Moreover, since $\left \langle  \underline{{\cal P}}_\D^{n,\mu}(\w^{n,\mu}_\D) , \one_\D \right \rangle = 0$, by~\eqref{def:P_esp} and~\eqref{Def:G:eq}, we have
$$\sum_{K \in \M} |K| u_K ^\infty \e^{w_K^{n,\mu}} -M^{n-1}= - \mu \Delta t\left \langle \w^{n,\mu}_\D , \one_\D \right \rangle.$$
Applying a Cauchy--Schwarz inequality, and recalling the bound~\eqref{def:w_eps}, we obtain 
$$\left |\sum_{K \in \M} |K| u_K ^\infty \e^{w_K^{n,\mu}} - M^{n-1} \right | \leq    \mu\Delta t \|\one_\D\| \|\w^{n,\mu}_\D\|\leq\sqrt{\mu}\sqrt{N\Delta t\,\N(w_\M^{n-1})},$$
so that, letting $\mu_0 \defi \frac{(M^{n-1})^2}{4 N\Delta t\,\N(w_\M^{n-1})}>0$, the following holds for all { $0 < \mu\leq\mu_0$}:
\begin{equation} \label{bound:mass}
  0<\frac{M^{n-1}}{2}\ifed M_\flat\leq \sum_{K \in \M} |K| u_K ^\infty \e^{w_K^{n,\mu}} \leq  M_\sharp\defi\frac{3 M^{n-1}}{2}.
\end{equation}
By~\eqref{bound:mass} and \eqref{bound:entro}, we infer that $\w^{n,\mu}_\D$ satisfies~\eqref{Entropycontrol} for $\mu$ sufficiently small with constants that are uniform in $\mu$, so that by Lemma~\ref{lemma:positivity} the family $(\w_\D^{n,\mu})_{0 <\mu\leq\mu_0}$ is bounded uniformly in $\mu$. As a consequence, by compactness, there is $\w_\D^n\in\V_\D$ such that, up to extraction, $\w_\D^{n,\mu}$ converges towards $\w_\D^n$ when $\mu$ tends to zero. Since $\underline{{\cal P}}_\D^{n,\mu}$ converges to $\underline{{\cal P}}_\D^{n,0}$ as $\mu$ tends to zero, we finally infer that $\underline{{\cal P}}_\D^{n,0}(\w_\D^n) =\underline{0}_\D$ (also, $\sum_{K \in \M} |K| u_K ^\infty \e^{w_K^{n}}=M^{n-1}$).\\

\noindent
{{\bf Conclusion:}} Letting $\u_\D^n\defi\u_\D^\infty\times{\rm exp}(\w_\D^n)$, we have $\Gu_\D^{n} ( \u_\D^n ) = \underline{0}_\D$, therefore $\u_\D^{n}$ is a (positive) solution to~\eqref{sch1}. By Propositions~\ref{prop:mass} and~\ref{prop:dissip}, and since $\Diss\big(\w_\D^n\big)=\Diss^n$, $\N^n$ is non-negative, and $\N^n$ is non-increasing in $n$ according to~\eqref{diss:nonlin}, we deduce that
$$\sum_{K \in \M} |K| u_K ^\infty \e^{w_K^n} = M\quad\text{ and }\quad\Diss(\w_\D^n) \leq \frac{\N^0}{\Delta t}.$$
By Lemma~\ref{lemma:positivity} (recall also that $u_\flat^\infty,u^\infty_\sharp$ only depend on $M$, $\phi$, and $\Omega$), there exists $C>0$, depending on $\Lambda$, $\phi$, $u^{in}$, $M$, $\Omega$, $d$, $\Delta t$, and $\D$, but not on $n\in\mathbb{N}^\star$, such that $-C \one_\D \leq \w_\D^n \leq C  \one_\D$. By~\eqref{bornesstationnaires}, we finally infer that $\u_\D^n\geq\varepsilon\one_\D$, with $\varepsilon\defi u_\flat^\infty \e^{-C}>0$ still independent of $n\in\mathbb{N}^\star$. This concludes the proof of Theorem~\ref{th:existencenonlin}.

\section{Long-time behaviour} \label{Time}

In this section, we analyse the long-time behaviour of the three HFV schemes we have introduced in Section~\ref{description}, thereby proving the main results of this paper.
{
\begin{rem}[Linear schemes and nonhomogeneous data]
  In order to stay consistent with Section~\ref{description}, we here below state our asymptotic results of Theorems~\ref{Th:hmm} and~\ref{Th:omega}, which respectively concern the (linear) standard and exponential fitting schemes, for discrete problems that feature homogeneous data (i.e., $g^D=0$ when $|\Gamma^D|>0$, or $M=0$ otherwise). Nonetheless, Theorems~\ref{Th:hmm} and~\ref{Th:omega} remain valid in the general case of nonhomogeneous data (we refer to Remark~\ref{rem:ls} for the straightforward adaptation of the schemes to this situation). Indeed, the proofs of the latter results only hinge on the fact that the difference between the discrete transient and steady-state solutions belongs to the homogeneous space $\V_{\D,0}$ or $\V^\omega_{\D,0}$, which is always true. This remark does not apply, however, to the nonlinear scheme.
\end{rem}}

\subsection{Asymptotic behaviour of the standard HFV scheme}


We recall that $u$ is the solution to Problem~\eqref{evol:mixed:ino}, and that $u^\infty$ is the corresponding steady-state, solution to Problem~\eqref{sta:mixed:ino}, and we consider the following definition of the relative entropy and dissipation: 
$$\N (t) \defi\frac{1}{2}  \left\| u(t) - u^\infty \right\|_{L^2(\Omega)} ^2,\qquad\Diss(t) \defi \int_\Omega \big(\Lambda \nabla (u(t)-u^\infty)-(u(t)-u^\infty)V^\phi\big)\cdot \nabla (u(t) - u^\infty).$$
It can be easily verified that the following entropy/dissipation relation holds at the continuous level:
$$\frac{{\rm d}}{{\rm d}t} \N (t) + \Diss(t) = 0.$$
It is assumed that $V^\phi$ is such that $\Diss(t)\geq C\|\nabla(u(t)-u^\infty)\|_{L^2(\Omega;\R^d)}^2$ for some $C>0$, so that $\Diss(t)$ indeed defines a dissipation.

At the discrete level, recalling that $\big(\u_\D^n\in\V_{\D,0}\big)_{n\geq 1}$ is the solution to Problem~\eqref{sch:HMM}, and that $\u_\D^\infty\in\V_{\D,0}$ is the corresponding steady-state, solution to Problem~\eqref{sccomp3}, we consider the following equivalents of the relative entropy and dissipation: for all $n\in\mathbb{N}^\star$,
$$\N ^n  \defi \frac{1}{2}  \left\| u^n_\M - u^\infty_\M \right\|_{L^2(\Omega)} ^2,\qquad\Diss^n  \defi a_\D\big(\u_\D^n - \u_\D^\infty , \u_\D^n - \u_\D^\infty\big),$$
where the discrete bilinear form $a_\D$ is defined {by~\eqref{eq:aK} and \eqref{eq:aD}}. The definition of the relative entropy is seamlessly extended to the case $n=0$. Our main result on the standard HFV scheme is the following.

\begin{theorem}[Asymptotic stability] \label{Th:hmm}
  Assume that the advection field $V^\phi$ satisfies the conditions~\eqref{cond:coerc} of Proposition~\ref{prop:coerc}, with constant $\beta$.
  Then, the following discrete entropy/dissipation relation holds true:
  \begin{equation} \label{diss}
    \forall n \in \mathbb{N}, \qquad \frac{\N ^{n+1} - \N^n }{\Delta t } + \Diss ^{n+1} \leq 0.
  \end{equation}
  Furthermore, the discrete entropy decays exponentially fast in time: there is $\nu \defi \frac{2  \kappa }{ C_P^2}>0 $, where $\kappa$ is the constant of~\eqref{def:kappa} (only depending on $\Lambda$, $\beta$, $\Omega$, $d$, $\Gamma^D$, and $\theta_\D$), and $C_P$ is either equal to $C_{PW}$ if $|\Gamma^D|=0$ or to $C_{P,\Gamma^D}$ otherwise (where $C_{PW}$, $C_{P,\Gamma^D}$ are the Poincar\'e constants of Proposition~\ref{prop:poinca}), such that
  \begin{equation} \label{decexp}
    \forall n \in \mathbb{N},\qquad \N^{n+1} \leq \left (    1 + \nu \Delta t  \right )^{-1} \N ^n .
  \end{equation}
  Consequently, the discrete solution converges exponentially fast in time towards its associated discrete steady-state: for all $n\in\mathbb{N}^\star$,
  \begin{equation} \label{decexpsol1}
    \left\| u^n_\M - u^\infty_\M \right\|_{L^2(\Omega)}
    \leq   ( 1 + \nu \Delta t  )^{-\frac{n}{2}} \left\| u^0_\M - u^\infty_\M \right\|_{L^2(\Omega)}.
  \end{equation}
\end{theorem}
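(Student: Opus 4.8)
The plan is to reduce everything to the error variable $\underline{e}_\D^n\defi\u_\D^n-\u^\infty_\D$, which belongs to $\V_{\D,0}$ since both $\u_\D^n$ and $\u^\infty_\D$ do and $\V_{\D,0}$ is a linear subspace. Subtracting the steady equation~\eqref{sccomp3} from the transient scheme~\eqref{sch1:HMM}, using the bilinearity of $a_\D$ and the identity $u^n_\M-u^{n-1}_\M=e^n_\M-e^{n-1}_\M$, I would obtain the error relation
\[
  \frac{1}{\Delta t}\big(e^n_\M-e^{n-1}_\M,v_\M\big)_\Omega+a_\D\big(\underline{e}_\D^n,\v_\D\big)=0\qquad\forall\v_\D\in\V_{\D,0}.
\]
Testing with $\v_\D=\underline{e}_\D^n\in\V_{\D,0}$ then turns the spatial term into exactly $\Diss^n=a_\D(\underline{e}_\D^n,\underline{e}_\D^n)$.

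For the time term, I would invoke the elementary pointwise inequality $(a-b)a\geq\frac12(a^2-b^2)$ (which follows from $(a-b)a-\frac12(a^2-b^2)=\frac12(a-b)^2\geq 0$), applied cellwise with the weights $|K|$. This gives $\big(e^n_\M-e^{n-1}_\M,e^n_\M\big)_\Omega\geq\N^n-\N^{n-1}$, whence $\frac{\N^n-\N^{n-1}}{\Delta t}+\Diss^n\leq 0$; after relabelling the index this is precisely~\eqref{diss} (the case $n=0$ being covered since $u^0_\M$ is fixed by~\eqref{sch2:HMM} and $\N^0$ is defined by extension).

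The exponential decay~\eqref{decexp} then comes from bounding the dissipation below by the entropy through a chain of two estimates. First, the coercivity result~\eqref{def:kappa} yields $\Diss^{n+1}\geq\kappa\,|\underline{e}_\D^{n+1}|_{1,\D}^2$. Second, since $\underline{e}_\D^{n+1}\in\V_{\D,0}$, the discrete Poincaré inequality of Proposition~\ref{prop:poinca} gives $\|e^{n+1}_\M\|_{L^2(\Omega)}\leq C_P\,|\underline{e}_\D^{n+1}|_{1,\D}$, so that $|\underline{e}_\D^{n+1}|_{1,\D}^2\geq\frac{2}{C_P^2}\N^{n+1}$. Combining the two gives $\Diss^{n+1}\geq\nu\,\N^{n+1}$ with $\nu=\frac{2\kappa}{C_P^2}$; inserting this into~\eqref{diss} yields $(1+\nu\Delta t)\N^{n+1}\leq\N^n$, which is~\eqref{decexp}. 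A straightforward induction gives $\N^n\leq(1+\nu\Delta t)^{-n}\N^0$, and taking square roots, using $\N^n=\frac12\|u^n_\M-u^\infty_\M\|_{L^2(\Omega)}^2$, delivers~\eqref{decexpsol1}.

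The computation is essentially routine, and the only point requiring real care is the bookkeeping that guarantees $\underline{e}_\D^n\in\V_{\D,0}$: this single membership is what simultaneously licenses using $\underline{e}_\D^n$ as a legitimate test function in the error relation, applying the coercivity estimate~\eqref{def:kappa} (which holds only on $\V_{\D,0}$), and applying the discrete Poincaré inequality. The crux of the argument is thus the chaining \emph{coercivity} $\to$ \emph{Poincaré} that converts the dissipation $\Diss^{n+1}$ into a multiple of the entropy $\N^{n+1}$; everything else is bilinearity and the convexity inequality for the discrete time derivative.
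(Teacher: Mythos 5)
Your proposal is correct and follows essentially the same route as the paper's proof: the elementary inequality $(a-b)a\geq\frac{1}{2}(a^2-b^2)$ is exactly the paper's convexity step $y^2-x^2\leq 2y(y-x)$, the test function $\u_\D^{n+1}-\u_\D^\infty\in\V_{\D,0}$ combined with subtraction of the steady equation is the same manipulation, and the chain coercivity~\eqref{def:kappa} followed by the discrete Poincar\'e inequality is identical. The only difference is cosmetic (forming the error equation first rather than substituting the two schemes into the entropy increment), so there is nothing to add.
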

\begin{proof}
  {Let $n\in\mathbb{N}$. One has 
  $$\N ^{n+1} - \N ^n = \sum _{K \in \M}  \frac{|K|}{2} \left (
   \left ( u_K ^{n+1} - u^\infty_K \right )^2	   - \left ( u_K ^{n} - u_K^\infty \right )^2  \right ).$$}
  Since $x \mapsto x^2$ is convex, for all $x,y\in\R$, we have $y^2 - x^2 \leq 2 y (y-x)$, therefore
  \begin{equation} \label{ineqentro}
    \N ^{n+1} - \N ^n \leq \sum _{K \in \M}  |K| \big( u_K ^{n+1} - u_K ^{n} \big) \big( u_K ^{n+1} - u_K ^\infty \big)= \big(u^{n+1}_\M - u^n_\M, u^{n+1}_\M - u^\infty_\M\big)_\Omega.
  \end{equation}
  Now, testing~\eqref{sch1:HMM} with $\v_\D^{n+1} \defi \u_\D^{n+1} - \u_\D^\infty \in \V_{\D,0}$ yields
  $$\big(u^{n+1}_\M - u^n_\M, v^{n+1}_\M\big)_\Omega = -\Delta t a_\D \big(\u_\D^{n+1}, \v_\D^{n+1}\big) +\Delta t\big( (f, v^{n+1}_\M)_\Omega + (g^N, v^{n+1}_\E)_{\Gamma^N}\big).$$
  By definition~\eqref{sccomp3} of the discrete steady-state  $\u_\D^\infty$, we also have
  $$(f, v^{n+1}_\M)_\Omega + (g^N, v^{n+1}_\E)_{\Gamma^N}= a_\D \big(\u_\D^\infty, \v_\D^{n+1}\big),$$
  whence, by bilinearity of $a_\D$, we infer
  $$\big(u^{n+1}_\M - u^n_\M,v^{n+1}_\M \big)_\Omega = -\Delta t \,a_\D \big(\u_\D^{n+1} - \u_\D^\infty, \v_\D^{n+1}\big) = - \Delta t \,\Diss^{n+1}.$$
  Combined to~\eqref{ineqentro}, this proves the entropy/dissipation relation~\eqref{diss}.
  Now, since the advection field $V^\phi$ satisfies~\eqref{cond:coerc}, we can invoke~\eqref{def:kappa} from Proposition~\ref{prop:coerc} to infer that  
  $$\Diss^{n+1} \geq  \kappa |\u_\D^{n+1} - \u_\D^\infty|^2 _{1, \D},$$
  where $\kappa>0$ only depends on $\Lambda$ ,$\beta$, $\Omega$, $d$, $\Gamma^D$, and $\theta_\D$.
  Combining this estimate with a discrete Poincaré inequality from Proposition~\ref{prop:poinca} (applied to $\u_\D^{n+1}-\u_\D^{\infty}\in\V_{\D,0}$), and with the definition of the discrete (relative) entropy, yields
  $$\Diss^{n+1} \geq \frac{\kappa }{C_P^2}  \left\| u^{n+1}_\M - u^\infty_\M \right\|^2_{L^2(\Omega)}=\frac{2\kappa }{C_P^2}\N^{n+1},$$
  where $C_P$ is either equal to $C_{PW}$ if $|\Gamma^D|=0$ or to $C_{P,\Gamma^D}$ otherwise.
  This last inequality, combined with the entropy/dissipation relation~\eqref{diss}, implies the entropy decay~\eqref{decexp}.
  The inequality~\eqref{decexpsol1} is then a straightforward consequence of the definition of $\N^n$.
\end{proof}
\noindent
The result of Theorem~\ref{Th:hmm} does not use the fact that $V^\phi$ is related to the gradient of a potential, it thus extends to general advection fields.

\subsection{Asymptotic behaviour of the exponential fitting scheme}

We recall that $\rho=\frac{u}{\omega}$, with $\omega=\e^{-\phi}$, is the solution to Problem~\eqref{def:evol:modeq}, and that $\rho^\infty$ is the corresponding steady-state, solution to Problem~\eqref{def:sta:modeq}. We consider the following $\omega$-weighted definitions of the relative entropy and dissipation: 
$$\N_\omega (t) \defi\frac{1}{2}  \left\| \rho(t) - \rho^\infty \right\|_{L^2_\omega(\Omega)} ^2,\qquad\Diss_\omega(t) \defi \int_\Omega \omega\Lambda\nabla (\rho(t)-\rho^\infty)  \cdot  \nabla (\rho(t) - \rho^\infty).$$
It can be easily verified that the following entropy/dissipation relation holds at the continuous level:
$$\frac{{\rm d}}{{\rm d}t} \N_\omega (t) + \Diss_\omega(t) = 0.$$

At the discrete level, let us recall that $\big(\ro_\D^n\in\V^\omega_{\D,0}\big)_{n\geq 1}$ is the solution to Problem~\eqref{sch:omega:evol}. We then set $\u_\D^n=\underline{\omega}_\D\times\ro_\D^n$ with $\underline{\omega}_\D$ defined by~\eqref{def-omega}, in such a way that $\u_\D^n\in\V_{\D,0}$. Similarly, $\ro_\D^\infty\in\V^\omega_{\D,0}$ is the corresponding steady-state, solution to Problem~\eqref{def:omega:sta}, and we set $\u_\D^\infty=\underline{\omega}_\D\times\ro_\D^\infty\in\V_{\D,0}$. We consider the following equivalents of the $\omega$-weighted (relative) entropy and dissipation: for all $n\in\mathbb{N}^\star$,
$$\N ^n_\omega  \defi \frac{1}{2}  \left\| \rho^n_\M - \rho^\infty_\M \right\|_{L^2_\omega(\Omega)} ^2,\qquad\Diss_\omega^n  \defi a^\omega_\D\big(\ro_\D^n - \ro_\D^\infty , \ro_\D^n - \ro_\D^\infty\big),$$
where the discrete bilinear form $a^\omega_\D$ is defined (locally) by~\eqref{def:omega:bilin}. The definition of the relative entropy is seamlessly extended to the case $n=0$. Our main result on the exponential fitting HFV scheme is the following, whose proof is very similar to the one of Theorem~\ref{Th:hmm}.

\begin{theorem}[Asymptotic stability] \label{Th:omega}
  The following discrete entropy/dissipation relation holds true:
  \begin{equation} \label{dissip:omega}
    \forall n \in \mathbb{N}, \qquad \frac{\N_\omega ^{n+1} - \N_\omega^n }{\Delta t } + \Diss_\omega ^{n+1} \leq 0.
  \end{equation}
  Furthermore, the discrete entropy decays exponentially fast in time: there is $\nu_\omega \defi\frac{2 \omega_\flat\lambda_\flat \alpha_\flat }{\omega_\sharp C_P^2} >0 $, where $\lambda_\flat\alpha_\flat$ is the coercivity constant of~\eqref{def:globalcoercivity} (only depending on $\Lambda$, $\Omega$, $d$, and $\theta_\D$), (we recall that) the bounds $\omega_\flat,\omega_\sharp$ only depend on $\phi$ and $\Omega$, and $C_P$ is either equal to $2C_{PW}$ if $|\Gamma^D|=0$ or to $C_{P,\Gamma^D}$ otherwise (where $C_{PW}$, $C_{P,\Gamma^D}$ are the Poincar\'e constants of Proposition~\ref{prop:poinca}), such that
  \begin{equation} \label{decay_omega}
    \forall n \in \mathbb{N},\qquad \N^{n+1}_\omega \leq \left (    1 + \nu_\omega \Delta t  \right )^{-1} \N_\omega ^n .
  \end{equation}
  Consequently, the discrete solution converges exponentially fast in time towards its associated discrete steady-state: for all $n\in\mathbb{N}^\star$,
  \begin{equation} \label{decexpsolome}
    \|u_\M^n - u_\M^\infty \|_{L^2(\Omega)} \leq \sqrt{\frac{\omega_\sharp}{\omega_\flat}} \left ( 1 + \nu_\omega \Delta t\right )^{-\frac{n}{2}} \|u_\M^0 - u_\M^\infty \|_{L^2(\Omega)}.
  \end{equation}
\end{theorem}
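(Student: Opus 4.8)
The plan is to mirror the three-step structure of the proof of Theorem~\ref{Th:hmm}, working throughout in the Slotboom variable $\ro_\D$ and transferring back to the physical unknown $\u_\D$ only at the very end. First I would establish the entropy/dissipation relation~\eqref{dissip:omega}. Fix $n\in\mathbb{N}$. Since the piecewise-constant functions satisfy $\|\rho_\M\|_{L^2_\omega(\Omega)}^2=\sum_{K\in\M}|K|\omega_K\rho_K^2$ by the definition~\eqref{def-omega} of $\omega_K$, the elementary convexity inequality $y^2-x^2\leq 2y(y-x)$ applied cellwise gives
$$\N_\omega^{n+1}-\N_\omega^n\leq\sum_{K\in\M}|K|\omega_K\big(\rho_K^{n+1}-\rho_K^n\big)\big(\rho_K^{n+1}-\rho_K^\infty\big)=\big(\rho_\M^{n+1}-\rho_\M^n,\omega_\M(\rho_\M^{n+1}-\rho_\M^\infty)\big)_\Omega.$$
The right-hand side is precisely the transient term of the scheme~\eqref{sch1:omega} tested with $\v_\D^{n+1}\defi\ro_\D^{n+1}-\ro_\D^\infty\in\V^\omega_{\D,0}$, the weight $\omega_\M$ being the one already present in~\eqref{sch1:omega}. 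Testing~\eqref{sch1:omega} at step $n+1$ with $\v_\D^{n+1}$ and subtracting the steady equation~\eqref{def:omega:sta} tested against the same $\v_\D^{n+1}$ cancels the data and yields
$$\big(\rho_\M^{n+1}-\rho_\M^n,\omega_\M v_\M^{n+1}\big)_\Omega=-\Delta t\,a_\D^\omega\big(\ro_\D^{n+1}-\ro_\D^\infty,\v_\D^{n+1}\big)=-\Delta t\,\Diss_\omega^{n+1},$$
which, combined with the previous display, proves~\eqref{dissip:omega}.

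Next I would turn this relation into exponential decay by proving the functional inequality $\Diss_\omega^{n+1}\geq\nu_\omega\N_\omega^{n+1}$. Starting from the weighted coercivity~\eqref{ef-coer}, one has $\Diss_\omega^{n+1}\geq\omega_\flat\lambda_\flat\alpha_\flat\,|\ro_\D^{n+1}-\ro_\D^\infty|_{1,\D}^2$, so it remains to bound $\|\rho_\M^{n+1}-\rho_\M^\infty\|_{L^2_\omega(\Omega)}$ from above by $|\ro_\D^{n+1}-\ro_\D^\infty|_{1,\D}$. When $|\Gamma^D|>0$, the difference lies in $\V^D_{\D,0}$, and one simply chains $\|\cdot\|_{L^2_\omega(\Omega)}\leq\sqrt{\omega_\sharp}\|\cdot\|_{L^2(\Omega)}$ with the Dirichlet Poincar\'e inequality~\eqref{poincadir}, producing $C_P=C_{P,\Gamma^D}$.

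The delicate case, and the \emph{main obstacle}, is the pure Neumann case $|\Gamma^D|=0$: here $\ro_\D^{n+1}-\ro_\D^\infty\in\V^\omega_{\D,0}$ has zero \emph{weighted} mass $\int_\Omega\omega(\rho_\M^{n+1}-\rho_\M^\infty)=0$, not zero ordinary mean, so the Wirtinger inequality~\eqref{poincawir} does not apply directly. I would handle this exactly as in the proof of Proposition~\ref{pro:wpef}: invoke \cite[Lemma~5.2]{CCHHK:20} to compare the weighted $L^2$ norm of a zero-weighted-mass function with that of its ordinary-mean-zero shift (at the cost of a factor $2$), then apply $\|\cdot\|_{L^2_\omega(\Omega)}\leq\sqrt{\omega_\sharp}\|\cdot\|_{L^2(\Omega)}$ and~\eqref{poincawir} to that shift. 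This gives $\|\rho_\M^{n+1}-\rho_\M^\infty\|_{L^2_\omega(\Omega)}\leq 2\sqrt{\omega_\sharp}\,C_{PW}\,|\ro_\D^{n+1}-\ro_\D^\infty|_{1,\D}$, whence $C_P=2C_{PW}$ --- exactly the value, factor $2$ included, announced in the statement. In both cases the resulting inequality reads $|\ro_\D^{n+1}-\ro_\D^\infty|_{1,\D}^2\geq\frac{2}{\omega_\sharp C_P^2}\N_\omega^{n+1}$, so that $\Diss_\omega^{n+1}\geq\nu_\omega\N_\omega^{n+1}$ with $\nu_\omega=\frac{2\omega_\flat\lambda_\flat\alpha_\flat}{\omega_\sharp C_P^2}$. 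Substituting into~\eqref{dissip:omega} gives $(1+\nu_\omega\Delta t)\N_\omega^{n+1}\leq\N_\omega^n$, which is~\eqref{decay_omega}.

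Finally I would iterate~\eqref{decay_omega} to obtain $\N_\omega^n\leq(1+\nu_\omega\Delta t)^{-n}\N_\omega^0$ and transfer back to the physical unknown via $u_K^n-u_K^\infty=\omega_K(\rho_K^n-\rho_K^\infty)$ together with the bounds $\omega_\flat\leq\omega_K\leq\omega_\sharp$. These yield the two-sided comparison $\omega_\flat\|\rho_\M^n-\rho_\M^\infty\|_{L^2_\omega(\Omega)}^2\leq\|u_\M^n-u_\M^\infty\|_{L^2(\Omega)}^2\leq\omega_\sharp\|\rho_\M^n-\rho_\M^\infty\|_{L^2_\omega(\Omega)}^2$. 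Using the upper bound at step $n$ (to introduce $\N_\omega^n=\tfrac12\|\rho_\M^n-\rho_\M^\infty\|_{L^2_\omega(\Omega)}^2$), the decay of $\N_\omega^n$, and the lower bound at step $0$ (to revert to $\|u_\M^0-u_\M^\infty\|_{L^2(\Omega)}$) produces the factor $\sqrt{\omega_\sharp/\omega_\flat}$ and, after taking square roots, the estimate~\eqref{decexpsolome}. The only genuinely new ingredient relative to Theorem~\ref{Th:hmm} is the weighted Poincar\'e argument of the third paragraph; everything else is a verbatim adaptation.
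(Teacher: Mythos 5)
Your proposal is correct and follows essentially the same route as the paper's proof, which itself is stated tersely by reference to the proofs of Theorem~\ref{Th:hmm} (for the entropy/dissipation relation) and of Proposition~\ref{pro:wpef} (for the weighted Poincar\'e step via \cite[Lemma~5.2]{CCHHK:20} in the pure Neumann case, yielding $C_P=2C_{PW}$). You simply make explicit the details the paper leaves implicit, including the final transfer from $\ro_\D$ back to $\u_\D$ via the bounds $\omega_\flat\leq\omega_K\leq\omega_\sharp$.
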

\begin{proof}
 Let $n\in\mathbb{N}$. {Reasoning as in the proof of Theorem~\ref{Th:hmm}, we infer
\[ \N^{n+1}_\omega-\N^n_\omega \leq
	 -\Delta t \,a_\D^\omega \big( \ro_\D^{n+1} -\ro_\D^\infty , \ro_\D^{n+1} -\ro_\D^\infty \big)
	  = - \Delta t \,\Diss^{n+1}_\omega, 
\]
which proves~\eqref{dissip:omega}. By the coercivity estimate~\eqref{ef-coer}, we have that
  $\displaystyle \Diss^{n+1}_\omega \geq \omega_\flat \lambda_\flat\alpha_\flat \big|\ro_\D^{n+1}-\ro_\D^\infty\big|_{1,\D}^2,$
  where $\alpha_\flat>0$ from Section~\ref{sse:wp} only depends on $\Omega$, $d$, and $\theta_\D$, and $\omega_\flat>0$ only depends on $\phi$ and $\Omega$.}
  Reasoning as in the proof of Proposition~\ref{pro:wpef}, and using a discrete Poincar\'e inequality from Proposition~\ref{prop:poinca} (combined with~\cite[Lemma 5.2]{CCHHK:20} in the case $|\Gamma^D|=0$), we also infer that
  $$\|\rho^{n+1}_\M-\rho^\infty_\M\|_{L^2_\omega(\Omega)}\leq \sqrt{\omega_\sharp}\,C_{P}|\ro_\D^{n+1}-\ro^\infty_\D|_{1,\D},$$
  where $\omega_\sharp>0$ only depends on $\phi$ and $\Omega$, and $C_P$ is either equal to $2C_{PW}$ if $|\Gamma^D|=0$ or to $C_{P,\Gamma^D}$ otherwise. Thus, we finally get that
  $$\Diss^{n+1}_\omega \geq \frac{2 \omega_\flat\lambda_\flat \alpha_\flat }{\omega_\sharp C_{P}^2 } \N^{n+1}_\omega.$$
  Combined to~\eqref{dissip:omega}, this yields~\eqref{decay_omega}. Deriving~\eqref{decexpsolome} is then straightforward.
\end{proof}

\subsection{Asymptotic behaviour of the nonlinear scheme} \label{asnon}

Recall that $u>0$ is the solution to Problem~\eqref{evol:mixed:ino} endowed with pure Neumann boundary conditions ($|\Gamma^D|=0$), and data $f=0$, $g^N=0$, and $u^{in}\geq 0$ with $M=\int_{\Omega}u^{in}>0$ {(see Appendix~\ref{Ap:NLmixted} for the case of mixed Dirichlet-Neumann boundary conditions)}, and that $u^\infty>0$, solution to Problem~\eqref{sta:mixed:ino} with same data, is the thermal equilibrium $u^\infty_{th}$ given by~\eqref{eq:therm} with ${\bar \rho}=\rho^M$.
The analysis of the nonlinear scheme relies on the entropy/dissipation structure~\eqref{nonlin:relation}-\eqref{nonlin:entro} introduced in Section~\ref{sse:nonlin}. Notice that the relative dissipation {(or relative Fisher information)} of~\eqref{nonlin:entro} can be equivalently rewritten
$$\Diss(t)=\int_{\Omega } u(t) \Lambda\nabla \log \left ( \frac{u(t)}{u^\infty} \right ) \cdot  \nabla \log \left ( \frac{u(t)}{u^\infty} \right )= 4 \int_\Omega u^\infty  \Lambda\nabla \sqrt {\frac{u(t)}{u^\infty} } \cdot  \nabla \sqrt {\frac{u(t)}{u^\infty} }.$$

At the discrete level, recalling that $\big(\u_\D^n\in\V_\D\big)_{n\geq 1}$ is a (positive) solution to Problem~\eqref{sch:nonlin}, and that $\u_\D^\infty\in\V_\D$ is the corresponding steady-state, solution to Problem~\eqref{def:nonlin:sta}, that is equal to the HFV interpolate of $u^\infty_{th}$, we consider the discrete entropy $\N^n$ and dissipation $\Diss^n$ defined by~\eqref{entro1}, and we define a discrete counterpart of the relative dissipation written in root-form: for all $n\geq 1$,
\begin{equation} \label{hatD}
		\hat{\Diss} ^n \defi 
		4 \sum_{K \in \M } u_{K,\flat}^\infty \int_K  \Lambda\nabla_K \rac_K^n \cdot  \nabla_K \rac_K^n
		= 4 \sum_{K \in \M } u^\infty_{K,\flat} \delta_K  \rac_K^n \cdot \A _K  \delta_K \rac_K^n , 
\end{equation}
where, for all $K\in\M$, {$u^\infty_{K,\flat}  \defi \min \left ( u_K^\infty,  \underset{\s \in \E_K }{\min} u_\s^\infty \right )$} and the matrix $\A_K$ is defined by~\eqref{def:flux:A}, and $\rac_\D^n$ is the element of $\V_\D$ such that
\begin{equation} \label{xi}
  \xi_K^n  \defi \sqrt {\frac{u^n_K}{u^\infty_K}}\quad\forall K \in \M,\qquad \xi_\s^n  \defi\sqrt {\frac{u^n_\s}{u^\infty_\s}}\quad\forall \s \in \E.
\end{equation}
At the discrete level, and as opposed to the continuous level, the quantities $\Diss^n$ and $\hat{\Diss} ^n$ are not equal, therefore we need to compare them. The definition of $ u^\infty_{K,\flat}$ results from the following observation: according to the structures of $\Diss^n$ and $\hat{\Diss} ^n$, locally, we expect to have to compare $u^\infty_{K,\flat}$ with $r_K(\u_K^n)$, which depends on $u_K^n$ and on the $(u_\s^n)_{\s \in \E_K}$.

\begin{prop}[Fisher information]\label{prop:Fisher}
  There is $C_F>0$, only depending on $\Lambda$, $\Omega$, $d$, and $\theta_\D$ such that 
  \begin{equation} \label{Fisher:dissipation}
    \forall n \geq 1, \qquad \hat{\Diss} ^n \leq C_F \Diss^n.
  \end{equation}
\end{prop}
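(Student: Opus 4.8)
The plan is to prove \eqref{Fisher:dissipation} by a completely local, face-by-face comparison, once both dissipations have been reduced to the \emph{diagonal} seminorm $|\cdot|_{1,K}$. The main obstacle is that $\hat{\Diss}^n$ and $\Diss^n$ are built from the \emph{same} symmetric positive semi-definite matrices $\A_K$, but evaluated on \emph{different} vectors, namely $\delta_K\rac_K^n$ and $\delta_K\w_K^n$, which are not proportional (the ratio $(\xi_K^n-\xi_\s^n)/(w_K^n-w_\s^n)$ varies from face to face). Hence the two $\A_K$-quadratic forms cannot be compared directly. The way around this is to sandwich each of them between multiples of $|\cdot|_{1,K}$ using the local boundedness/coercivity estimate~\eqref{def:localcoercivity}; since $|\cdot|_{1,K}$ is a diagonal form over $\s\in\E_K$, this decouples the faces and reduces the matter to a scalar inequality.

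Concretely, I would first rewrite, using \eqref{eq:TK}--\eqref{def:a_K} and \eqref{hatD}--\eqref{entro1},
\[
  \hat{\Diss}^n=4\sum_{K\in\M}u^\infty_{K,\flat}\,a_K^\Lambda(\rac_K^n,\rac_K^n),\qquad \Diss^n=\sum_{K\in\M}r_K(\u_K^n)\,a_K^\Lambda(\w_K^n,\w_K^n).
\]
Applying the upper bound of~\eqref{def:localcoercivity} to $\rac_K^n$ and the lower bound to $\w_K^n$ (and multiplying by the positive weights $u^\infty_{K,\flat}$, resp.\ $r_K(\u_K^n)$) gives $\hat{\Diss}^n\leq 4\lambda_\sharp\alpha_\sharp\sum_{K}u^\infty_{K,\flat}\,|\rac_K^n|_{1,K}^2$ and $\Diss^n\geq\lambda_\flat\alpha_\flat\sum_{K}r_K(\u_K^n)\,|\w_K^n|_{1,K}^2$. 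Because both seminorms carry the same weights $\tfrac{|\s|}{d_{K,\s}}$, it then suffices to establish the scalar face-by-face inequality
\[
  u^\infty_{K,\flat}\,(\xi_K^n-\xi_\s^n)^2\leq \tfrac{1}{4}\,d\,\theta_\D^2\,r_K(\u_K^n)\,(w_K^n-w_\s^n)^2\qquad\forall K\in\M,\ \forall\s\in\E_K,
\]
after which summation over $\s$ and the two displayed bounds yield~\eqref{Fisher:dissipation} with $C_F=\tfrac{\lambda_\sharp\alpha_\sharp}{\lambda_\flat\alpha_\flat}\,d\,\theta_\D^2$, which indeed depends only on $\Lambda$, $\Omega$, $d$, and $\theta_\D$.

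For the scalar inequality I would invoke the assumptions~\eqref{hyp:g}. Setting $a\defi u_K^n/u_K^\infty$ and $b\defi u_\s^n/u_\s^\infty$, one has $\xi_K^n-\xi_\s^n=\sqrt a-\sqrt b$ and $w_K^n-w_\s^n=\log a-\log b$ by~\eqref{xi} and~\eqref{sch3:w}. Writing $\sqrt a-\sqrt b=\tfrac{a-b}{\sqrt a+\sqrt b}$ and $\log a-\log b=\tfrac{a-b}{L(a,b)}$ with $L$ the logarithmic mean, I get $\tfrac{(\sqrt a-\sqrt b)^2}{(\log a-\log b)^2}=\tfrac{L(a,b)^2}{(\sqrt a+\sqrt b)^2}$; the chain of mean inequalities recalled just after~\eqref{def:f} (namely $L(a,b)\leq(\tfrac{\sqrt a+\sqrt b}{2})^2$ and $L(a,b)\leq m(a,b)$ from~\eqref{hyp:g:bounds}) bounds this ratio by $\tfrac14 m(a,b)$, whence $(\xi_K^n-\xi_\s^n)^2\leq\tfrac14\, m\big(\tfrac{u_K^n}{u_K^\infty},\tfrac{u_\s^n}{u_\s^\infty}\big)(w_K^n-w_\s^n)^2$.

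It then remains to absorb $u^\infty_{K,\flat}\,m\big(\tfrac{u_K^n}{u_K^\infty},\tfrac{u_\s^n}{u_\s^\infty}\big)$ into $d\theta_\D^2\,r_K(\u_K^n)$. Since $u^\infty_{K,\flat}\leq u_K^\infty$ and $u^\infty_{K,\flat}\leq u_\s^\infty$, the monotonicity~\eqref{hyp:g:monotony} and homogeneity~\eqref{hyp:g:homogeneity} of $m$ give $m\big(\tfrac{u_K^n}{u_K^\infty},\tfrac{u_\s^n}{u_\s^\infty}\big)\leq m\big(\tfrac{u_K^n}{u^\infty_{K,\flat}},\tfrac{u_\s^n}{u^\infty_{K,\flat}}\big)=\tfrac{1}{u^\infty_{K,\flat}}m(u_K^n,u_\s^n)$; and by~\eqref{eq:rK}--\eqref{def:f} (in either case, average or max, all arguments being positive) combined with the bound~\eqref{def:boundfaces} on $|\E_K|$, one has $m(u_K^n,u_\s^n)\leq|\E_K|\,r_K(\u_K^n)\leq d\theta_\D^2\,r_K(\u_K^n)$. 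Multiplying the last two estimates and feeding them into the previous display produces exactly the scalar inequality, which concludes the argument. The only genuinely subtle step is the decoupling via~\eqref{def:localcoercivity}; everything else is the elementary logarithm/mean calculus built into~\eqref{hyp:g} together with the design of $u^\infty_{K,\flat}$ and $r_K$.
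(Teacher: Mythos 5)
Your proof is correct, and it follows the same overall strategy as the paper's: reduce both $\A_K$-quadratic forms to a diagonal form so that the faces decouple, prove the scalar inequality $4(\sqrt a-\sqrt b)^2\leq m(a,b)(\log a-\log b)^2$, and absorb $u^\infty_{K,\flat}\,m\big(\tfrac{u_K^n}{u_K^\infty},\tfrac{u_\s^n}{u_\s^\infty}\big)$ into $d\theta_\D^2\,r_K(\u_K^n)$ via monotonicity, homogeneity, and the bound~\eqref{def:boundfaces}. The one genuine difference is the choice of diagonal intermediary. The paper sandwiches $\A_K$ between itself and the diagonal matrix $\B_K$ of~\eqref{BK}, which requires the comparison result $w\cdot\A_K w\leq w\cdot\B_K w\leq C_B\,w\cdot\A_K w$ of Lemma~\ref{lemma:cond} (proved in Appendix~\ref{ap:AK} via a condition-number estimate and the equivalence of the $\|\cdot\|_1$ and $\|\cdot\|_2$ matrix norms). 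You instead use the seminorm $|\cdot|_{1,K}$ together with the already-established two-sided bound~\eqref{def:localcoercivity}, applying the upper bound to $\rac_K^n$ and the lower bound to $\w_K^n$. This bypasses Lemma~\ref{lemma:cond} entirely and yields the explicit, arguably sharper constant $C_F=\tfrac{\lambda_\sharp\alpha_\sharp}{\lambda_\flat\alpha_\flat}\,d\,\theta_\D^2$, whereas the paper obtains $C_F=d\theta_\D^2C_B$ with $C_B=C_A\max_l\gamma_l$ and $C_A=\tfrac{\lambda_\sharp\alpha_\sharp}{\lambda_\flat\alpha_\flat}\theta_\D^2$; both depend only on $\Lambda$, $\Omega$, $d$, and $\theta_\D$, as required. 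Your derivation of the scalar inequality through the chain of means $L(a,b)\leq\big(\tfrac{\sqrt a+\sqrt b}{2}\big)^2$ and $L(a,b)\leq m(a,b)$ is equivalent to the paper's Cauchy--Schwarz argument on $\int_a^b z^{-1/2}\,\dd z$. In short: same skeleton, but a leaner diagonalization step that makes the appendix lemma unnecessary for this particular proposition.
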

\begin{proof}
  Let $n\in\mathbb{N}^\star$, and $K\in\M$. By~\eqref{CB} from Lemma~\ref{lemma:cond}, we first have that
  \begin{equation}\label{minD}
    4 u^\infty_{K,\flat} \,\delta_K  \rac_K^n \cdot \A _K  \delta_K \rac_K^n
    \leq 4 u^\infty_{K,\flat}\, \delta_K  \rac_K ^n  \cdot \B _K  \delta_K \rac_K ^n , 
  \end{equation}
  where the matrix $\B_K$ is the diagonal matrix defined by~\eqref{BK}.
  Since for all $(x,y) \in \big(\R_+^\star\big)^2$, $\int_x ^y \frac{{\rm d}z}{\sqrt{z}} = 2  \left ( \sqrt{y} - \sqrt{x} \right )$, the Cauchy--Schwarz inequality yields
  \begin{equation*}
    4 \left ( \sqrt{y} - \sqrt{x} \right ) ^2 \leq (y-x) \big(\log(y) - \log(x)\big).
  \end{equation*}
  By the property~\eqref{hyp:g:bounds} of the function $m$, we then get that 
  \begin{equation} \label{sym1}
    \forall (x,y) \in \big(\R^\star_+\big)^2,\qquad 
    4 \left ( \sqrt y - \sqrt x\right )^2  \leq m(x,y)  \big( \log(y) - \log (x) \big) ^2.
  \end{equation}
  Since $\B_K$ is diagonal, the combination of~\eqref{minD},~\eqref{xi}, and~\eqref{sym1}, yields 
  \[
  \begin{split}
    4 u^\infty_{K,\flat}\, \delta_K  \rac_K^n \cdot \A _K  \delta_K \rac_K^n
    & \leq  \sum_{\s \in \E_K} 4 u^\infty_{K,\flat} B_K ^{\s\s} \left (\sqrt {\frac{u^n_K}{u^\infty_K} } -  \sqrt {\frac{u^n_\s}{u^\infty_\s} } \right )  ^2\\
    & \leq \sum_{\s \in \E_K} u^\infty_{K,\flat} B_K ^{\s\s} m \left ( \frac{u^n_K}{u^\infty_K} , \frac{u^n_\s}{u^\infty_\s}  \right ) \left (\log \left ( {\frac{u^n_K}{u^\infty_K} }  \right )-  \log \left (  {\frac{u^n_\s}{u^\infty_\s} } \right )  \right )  ^2 .
  \end{split}
  \]
  By definition of $u^\infty_{K,\flat}$, and monotonicity \eqref{hyp:g:monotony} and homogeneity \eqref{hyp:g:homogeneity} of $m$, we infer that, for all $\s\in\E_K$,
  \[
  u^\infty_{K,\flat} m \left ( \frac{u^n_K}{u^\infty_K} , \frac{u^n_\s}{u^\infty_\s}  \right ) \leq 
  u^\infty_{K,\flat} m \left ( \frac{u^n_K}{u^\infty_{K,\flat}} , \frac{u^n_\s}{u^\infty_{K,\flat}}  \right )
  = m \left ( u^n_K, u^n_\s \right ).
  \]
  By definition \eqref{def:f} of $f_{|\E_K|}$, and the bound \eqref{def:boundfaces} on $|\E_K|$, we then have 
  \begin{equation*}
    \underset{\s \in \E_K }{\max } \ u^\infty_{K,\flat} m \left ( \frac{u^n_K}{u^\infty_K} , \frac{u^n_\s}{u^\infty_\s}  \right )\leq\underset{\s \in \E_K }{\max } \ m \left ( u^n_K, u^n_\s \right ) \leq |\E_K| r_K (\u_K^n) \leq d \theta_\D ^2 \ r_K (\u_K^n).
  \end{equation*}
  We deduce that 
  \[
  \begin{split}
    4 u^\infty_{K,\flat}\, \delta_K  \rac_K^n \cdot \A _K  \delta_K \rac_K^n &
    \leq d \theta_\D ^2  \ r_K (\u_K^n)  \sum_{\s \in \E_K}  B_K ^{\s\s} \left (\log \left ( {\frac{u^n_K}{u^\infty_K} }  \right )-  \log \left (  {\frac{u^n_\s}{u^\infty_\s} } 	\right )  \right )  ^2\\
    & = d \theta_\D ^2 \ r_K (\u_K^n) \  \delta_K  \logg_K ^n  \cdot \B _K  \delta_K  \logg_K ^n,
  \end{split}
  \]
  where $\w_K^n\in\V_K$ is such that $\u_K^n=\u_K^\infty\times{\rm exp}(\w_K^n)$. Using again~\eqref{CB} from Lemma~\ref{lemma:cond}, we finally infer that
  $$4 u^\infty_{K,\flat}\, \delta_K  \rac_K^n \cdot \A _K  \delta_K \rac_K^n\leq d \theta_\D ^2 C_B\ r_K (\u_K^n) \  \delta_K  \logg_K ^n  \cdot \A _K  \delta_K  \logg_K ^n,$$
  with $C_B>0$ only depending on $\Lambda$, $\Omega$, $d$, and $\theta_\D$.
  Summing over $K \in \M$, and recalling the definitions~\eqref{hatD} of $\hat{\Diss}^n$, and~\eqref{entro1} of $\Diss^n$, eventually yields~\eqref{Fisher:dissipation} with $C_F =d \theta_\D ^2  C_B $.
\end{proof}

\noindent
The long-time behaviour of the nonlinear HFV scheme is studied in the following result.

\begin{theorem}[Asymptotic stability] \label{Th:nonlin}
  Recall the discrete entropy/dissipation relation of Proposition~\ref{prop:dissip}.
  The discrete entropy decays exponentially fast in time: there is ${\nu_{\mathrm{nl}}}\defi\frac{4  u_\flat^\infty \lambda_\flat \alpha_\flat}{C_FC_{LS,\infty}^2}>0$, depending on $\Lambda$, $\phi$, $M$, $\Omega$, $d$, and $\theta_\D$ such that 
  \begin{equation} \label{nonlin:decentropie}
    \forall n \in \mathbb{N}, \qquad \N^{n+1} \leq (1 + \nu_{\mathrm{nl}} \,\Delta t ) ^{-1} \N^n. 
  \end{equation} 
  Consequently, the discrete solution converges exponentially fast in time towards its associated discrete steady-state: for all $n\in\mathbb{N}^\star$,
  \begin{equation} \label{decL1}
    \| u^n_\M - u^\infty_\M \|_{L^1(\Omega)} \leq \sqrt{2 M \N^0} \left ( 1 + \nu_{\mathrm{nl}} \Delta t \right ) ^{-\frac{n}{2}}.
  \end{equation}
\end{theorem}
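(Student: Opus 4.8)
The plan is to reduce everything to a single \emph{entropy--entropy dissipation} inequality, namely $\Diss^{n+1}\geq\nu_{\mathrm{nl}}\,\N^{n+1}$ for all $n\in\mathbb{N}$, and then to inject it into the discrete entropy/dissipation relation~\eqref{diss:nonlin} of Proposition~\ref{prop:dissip}. Granting this inequality, \eqref{diss:nonlin} rewrites as
\[
\frac{\N^{n+1}-\N^n}{\Delta t}+\nu_{\mathrm{nl}}\,\N^{n+1}\leq\frac{\N^{n+1}-\N^n}{\Delta t}+\Diss^{n+1}\leq 0,
\]
which rearranges into $(1+\nu_{\mathrm{nl}}\Delta t)\,\N^{n+1}\leq\N^n$, that is precisely the geometric decay~\eqref{nonlin:decentropie}. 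A straightforward induction then yields $\N^n\leq(1+\nu_{\mathrm{nl}}\Delta t)^{-n}\N^0$.

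To establish the entropy--entropy dissipation inequality, I would first move from the true dissipation $\Diss^{n+1}$ to its root-form surrogate $\hat{\Diss}^{n+1}$ through Proposition~\ref{prop:Fisher}, which gives $\Diss^{n+1}\geq C_F^{-1}\hat{\Diss}^{n+1}$. The merit of $\hat{\Diss}^{n+1}$ is that it is a genuine weighted Dirichlet energy of the root-variable $\rac_\D^{n+1}$ of~\eqref{xi}. Recognising the local bilinear form $a_K^\Lambda$ via~\eqref{def:aKL}, invoking the local coercivity bound~\eqref{def:localcoercivity}, and using the elementary lower bound $u^\infty_{K,\flat}\geq u^\infty_\flat$ inherited from~\eqref{bornesstationnaires}, I would obtain
\[
\hat{\Diss}^{n+1}=4\sum_{K\in\M}u^\infty_{K,\flat}\,a_K^\Lambda\big(\rac_K^{n+1},\rac_K^{n+1}\big)\geq 4\,u^\infty_\flat\,\lambda_\flat\alpha_\flat\,|\rac_\D^{n+1}|_{1,\D}^2.
\]

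It then remains to control the relative entropy $\N^{n+1}=\sum_{K\in\M}|K|u_K^\infty\,\Phi_1\big((\xi_K^{n+1})^2\big)$ by this Dirichlet energy. Here mass conservation (Proposition~\ref{prop:mass}), combined with $\int_\Omega u^\infty_\M=M$, furnishes the normalisation $\sum_{K\in\M}|K|u_K^\infty(\xi_K^{n+1})^2=\sum_{K\in\M}|K|u_K^\infty=M$, so that $\rac_\D^{n+1}$ is admissible for a discrete logarithmic Sobolev inequality with respect to the reference weight $u^\infty_\M$. Invoking such an inequality --- with constant $C_{LS,\infty}$, to be recorded among the functional inequalities of Appendix~\ref{Appendix_funcinequalities} --- gives $\N^{n+1}\leq C_{LS,\infty}^2\,|\rac_\D^{n+1}|_{1,\D}^2$. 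Chaining the three bounds produces $\Diss^{n+1}\geq\frac{4u^\infty_\flat\lambda_\flat\alpha_\flat}{C_FC_{LS,\infty}^2}\,\N^{n+1}=\nu_{\mathrm{nl}}\,\N^{n+1}$, as wanted. I expect this discrete log-Sobolev estimate to be the genuine difficulty: in contrast with the $L^2$-based arguments of Theorems~\ref{Th:hmm} and~\ref{Th:omega}, which merely need a Poincar\'e inequality, the nonlinear entropy calls for a true logarithmic Sobolev inequality adapted to the hybrid cell/face unknowns and to the weight $u^\infty_\M$, and the delicate point is to keep its constant under control in terms of $\phi$, $M$, and $\Omega$ (through the bounds $u^\infty_\flat,u^\infty_\sharp$).

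Finally, for the $L^1$ convergence~\eqref{decL1}, the plan is to call upon a Csisz\'ar--Kullback--Pinsker inequality. As $\int_\Omega u^n_\M=\int_\Omega u^\infty_\M=M$, it delivers $\|u^n_\M-u^\infty_\M\|_{L^1(\Omega)}^2\leq 2M\,\N^n$; combined with the decay $\N^n\leq(1+\nu_{\mathrm{nl}}\Delta t)^{-n}\N^0$ already obtained, this gives $\|u^n_\M-u^\infty_\M\|_{L^1(\Omega)}\leq\sqrt{2M\N^0}\,(1+\nu_{\mathrm{nl}}\Delta t)^{-n/2}$, which is~\eqref{decL1}.
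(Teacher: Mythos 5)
Your proposal is correct and follows essentially the same route as the paper's proof: the chain $\Diss^{n+1}\geq C_F^{-1}\hat{\Diss}^{n+1}\geq 4u_\flat^\infty\lambda_\flat\alpha_\flat C_F^{-1}|\rac_\D^{n+1}|_{1,\D}^2$ via Proposition~\ref{prop:Fisher} and the coercivity estimate, the discrete logarithmic Sobolev inequality of Proposition~\ref{prop:logsob} (made admissible by the mass conservation of Proposition~\ref{prop:mass}) to bound $\N^{n+1}$ by $C_{LS,\infty}^2|\rac_\D^{n+1}|_{1,\D}^2$, and the Csisz\'ar--Kullback lemma for the $L^1$ estimate. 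You also correctly identify the log-Sobolev inequality as the genuinely delicate ingredient, which is exactly what the paper isolates in Appendix~\ref{Appendix_funcinequalities}.
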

\begin{proof}
  Let $n\in\mathbb{N}$. By definition~\eqref{hatD} of $\hat{\Diss}^n$, and from the coercivity estimate~\eqref{def:globalcoercivity}, we first infer that
  \[
  \hat{\Diss} ^{n+1}  = 4 \sum_{K \in \M } u_{K,\flat}^\infty a_K^\Lambda\big( \rac_K^{n+1} , \rac_K^{n+1}\big)
  \geq  4  u_\flat^\infty a^\Lambda_\D \big( \rac_\D^{n+1} , \rac_\D^{n+1}   \big)
  \geq  4  u_\flat^\infty \lambda_\flat \alpha_\flat\big| \rac_\D^{n+1}  \big| _{1,\D}^2,
  \]
  which, combined with~\eqref{Fisher:dissipation}, implies that
  $$\Diss^{n+1}\geq\frac{1}{C_F}\hat{\Diss}^{n+1}\geq\frac{4  u_\flat^\infty \lambda_\flat \alpha_\flat}{C_F}\big| \rac_\D^{n+1}  \big| _{1,\D}^2.$$
  In order to compare this quantity with the entropy, we use the discrete $\log$-Sobolev inequality~\eqref{LogSob} from Proposition~\ref{prop:logsob}, applied to the couple $(\u_\D^{n+1}, \u_\D^\infty)$ (which satisfies the mass condition owing to Proposition~\ref{prop:mass}). We get
  \[
  \N^{n+1}  = \int_{\Omega}u_\M^\infty \Phi_1 \left ( \frac{u^{n+1}_\M}{u_\M^\infty} \right )
  \leq C_{LS,\infty}^2 \, \big|\rac_\D^{n+1} \big|_{1,\D}^2,
  \] 
  which, combined with the previous estimate, yields 
  \[
  \Diss^{n+1}\geq\frac{4  u_\flat^\infty \lambda_\flat \alpha_\flat}{C_FC_{LS,\infty}^2} \,\N^{n+1}.
  \]
  Combined with~\eqref{diss:nonlin} from Proposition~\ref{prop:dissip}, this shows~\eqref{nonlin:decentropie}.
  The $L^1$-norm estimate~\eqref{decL1} is then a direct consequence of~\eqref{nonlin:decentropie} and of the Csisz\'ar--Kullback lemma (cf., e.g.,~\cite[Lemma 5.6]{CCHHK:20}) applied to the probability measure $\mu(x)\,{\rm d}x=u_\M^\infty(x)\,\frac{{\rm d}x}{M}$ and to the function $g=\frac{u_\M^n}{u_\M^\infty}$ such that $\int_\Omega g{\rm d}\mu=1$, which yields $\|u_\M^n-u_\M^\infty\|_{L^1(\Omega)}\leq\sqrt{2M\N^n}$ for all $n\geq 1$.
\end{proof}
{
\begin{rem}[Norms and long-time behaviour]
Notice that Theorem \ref{Th:nonlin} states an exponential decay in $L^1$-norm, whereas Theorems~\ref{Th:hmm} and~\ref{Th:omega} assert a convergence in $L^2$, and thus a convergence in $L^p$ for any $p\in [1,2]$. 
This is reminiscent of the fact that the natural topologies for the linear and nonlinear problems differ.
\end{rem}
}

\section{Numerical results} \label{Numerical}

\subsection{Implementation}

In this section, we discuss some practical aspects concerning the implementation of the schemes described in this paper.
In all the test-cases presented below, the two-dimensional domain is taken to be $\Omega = (0,1)^2$.
The meshes used for the numerical tests, presented on Figure \ref{fig:meshes}, are the classical Cartesian, triangular, and Kershaw meshes from the FVCA V benchmark (see~\cite{HerHu:08}), as well as a tilted hexagonal-dominant mesh (cf.~\cite{DPLe:15}).
\begin{figure}[h]
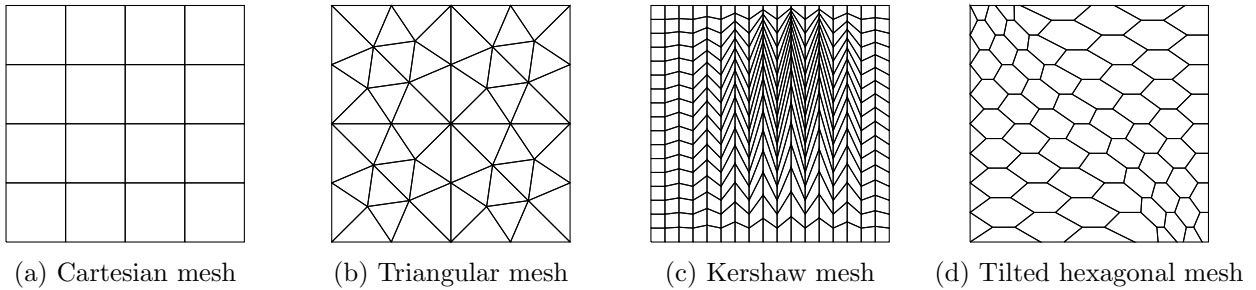

\begin{subfigure}{.25\textwidth}
  \centering
  \includegraphics[width=.75\textwidth]{meshes/mesh2_1.tikz}
  \caption{Cartesian mesh}
  \label{fig:cartesian}
\end{subfigure}
\begin{subfigure}{.25\textwidth}
  \centering
  \includegraphics[width=.75\textwidth]{meshes/mesh1_1.tikz}
  \caption{Triangular mesh}
  \label{fig:triangular}
\end{subfigure}%
\begin{subfigure}{.25\textwidth}
  \centering
  \includegraphics[width=.75\textwidth]{meshes/mesh4_1_1.tikz}
  \caption{Kershaw mesh}
  \label{fig:kershaw}
\end{subfigure}%
\begin{subfigure}{.25\textwidth}
  \centering
  \includegraphics[width=.75\textwidth]{meshes/pi6_tiltedhexagonal_2.tikz}
  \caption{Tilted hexagonal mesh}
  \label{fig:tiltedhexa}
\end{subfigure}
\caption{\textbf{Implementation.} Coarsest meshes of each family used in the numerical tests.}
\label{fig:meshes}
\end{figure}
These meshes have convex cells, hence we always choose $x_K$ to be the barycentre of $K\in\M$.
{In our implementation, we compute the meshsize as $\tilde{h}_\D=\underset{ K \in \M}{\max} \frac{|K|}{|\partial K|}$. Observe that $\tilde{h}_\D/h_\D$ is framed by constants only depending on the mesh regularity.} 
{Notice also that the Kershaw mesh family is not uniformly regular in the sense defined in Section~\ref{sec:discre}.}
{In practice, we use a fixed value $\sqrt{2}<\eta = 1.5<2$ of the stabilisation parameter (see Remark~\ref{rem:stab})}.
In the sequel, we denote by HMM the classical HFV linear scheme for advection-diffusion, and we restrict our attention to the \SG discretisation of the flux \eqref{def_conv}, namely to the function $A(s) = \frac{s}{\e^s -1} -1$, extended by continuity to $0$ at $s=0$.

\subsubsection{Linear systems and static condensation}

The two linear (HMM and exponential fitting) schemes are implemented in the same way. To fix ideas, we consider the evolution problem with pure Neumann boundary conditions, of unknown solution $\u_\D \in \V_\D$. We denote by $U_\M \in \R^{|\M|}$ and $U_\E \in \R^{|\E|}$ the unknown vectors $(u_K)_{K\in\M}$ and $(u_\s)_{\s \in \E}$. The linear schemes result in the following block system: 
\begin{equation} \label{sysmatrix}
	\begin{pmatrix}
	\Mat_\M & \Mat_{\M,\E} \\ 
	\Mat_{\E,\M} & \Mat_\E
	\end{pmatrix} 
	\begin{pmatrix} U_\M \\  U_\E \end{pmatrix} 
	= \begin{pmatrix} S_\M \\  S_\E \end{pmatrix},  
\end{equation}
where $\Mat_\M \in \R^{|\M| \times|\M| }$, $\Mat_{\M,\E} \in \R^{|\M| \times|\E|}$, $\Mat_{\E,\M} \in \R^{|\E| \times|\M|}$, $\Mat_\E \in \R^{|\E|\times |\E|}$, and  $S_\M\in\R^{|\M|}$ and $S_\E\in\R^{|\E|}$ stem from the loading term and the boundary conditions.
By construction, the matrix $\Mat_\M$ is diagonal with non-zero diagonal entries, and can therefore be inverted at a very low computational cost.
Thus, one can eliminate the cell unknowns, noticing that
\begin{equation} \label{bulkunk}
	U_\M = \Mat_\M ^{-1} \left ( S_\M - \Mat_{\M,\E} U_\E \right ).
\end{equation}
Using this relation, one infers that $U_\E$ is the solution to the following linear system:
\begin{equation}\label{squelunk}
	\left ( \Mat_\E - \Mat_{\E,\M}\Mat_\M ^{-1} \Mat_{\M,\E} \right ) U_\E 
		= S_\E - \Mat_{\E,\M} \Mat_\M ^{-1} S_\M, 
\end{equation}
where $\displaystyle \Mat_\D \defi \Mat_\E - \Mat_{\E,\M}\Mat_\M ^{-1} \Mat_{\M,\E}$, the so-called Schur complement of the matrix $\Mat_\M$, is an invertible matrix of size $|\E|\times |\E|$. 
In practice, we solve the linear system \eqref{squelunk} using an LU factorisation algorithm, and we use the solution $U_\E$ to reconstruct $U_\M$ from~\eqref{bulkunk}. This method, called \textit{static condensation}, allows one to replace a system of size $|\M|+|\E|$ by a system of size $|\E|$ without additional fill-in.
In the case of mixed Dirichlet-Neumann boundary conditions, the Dirichlet face unknowns are eliminated from the global linear system.

%
%
\subsubsection{Exponential fitting scheme: choice of unknown and harmonic averaging}

The exponential fitting scheme can be expressed in either the $u$ or the $\rho = u\e^\phi$ variable. In the $\rho$ variable, the resulting linear system is symmetric. One can then use, e.g., Cholesky factorisation or a conjugate gradient method. However, the formulation in $\rho$ is ill-conditioned. In our numerical experiments, the ratio between the condition numbers of the linear systems in $\rho$ and in $u$ often exceeds $10^5$. Because of this, we chose and we recommend to solve the linear system in the unknown $u$.
{
Notice that solving the system in $u$ is equivalent to right pre-condition the system in $\rho$ with the inverse of the diagonal matrix with entries the coordinates of the interpolate of $\omega = \e^{-\phi}$.
}

In order to implement the exponential fitting scheme, one needs to evaluate averages of the diffusion tensor $\frac{1}{|P_{K,\s}|}\int_{P_{K,\s}} \omega \Lambda$. Observe that $\omega(x)/\omega(x_K)$ is of order $\e^{h_K \|\nabla \phi \|_{L^\infty(P_{K,\s})}}$ in $P_{K,\s}$. Therefore, with large advection fields, the diffusion problem \eqref{def:sta:modeq} becomes strongly heterogeneous. It is pointed out in \cite{BMaPi:89} that an (empirical) solution to improve robustness to this heterogeneity is to use harmonic averages to approximate integrals of the diffusion tensor. In the numerical tests of the following subsections, we compare the ``classical" exponential fitting scheme (for which the integral is approximated by a standard - second order - quadrature) with the ``harmonic" one, in which case we choose to use the following approximation:
\[
\frac{1}{|P_{K,\s}|}\int_{P_{K,\s}} \omega \Lambda \approx
		 3 \left (\ \sum_{F \in \E_{P_{K,\s}}} \frac{1}{\omega(\overline{x}_F)} \right )^{-1}\Lambda(x_K), 
\]
where $\E_{P_{K,\s}}$ denotes the set of edges of the triangle $P_{K,\s}$ (recall that $d=2$ in our experiments), and $\overline{x}_F$ is the barycentre of $F\in\E_{P_{K,\s}}$.

\subsubsection{Nonlinear scheme and Newton's method}

The implementation of the nonlinear scheme relies on the following formulation: given $\u^{n-1}_\D \in \V_\D$ positive, we want to solve the nonlinear system $\Gu_\D^{n,\delta t}(\u^{n}_\D) =\underline{0}_\D$, where $\Gu_\D^{n,\delta t}$ is defined as in \eqref{def:G} but with a time step $\delta t$ instead of $\Delta t$.
The resolution of this system relies on Newton's method.

First, one initialises the method with $\delta t = \Delta t$, and initial guess $\max(\u^{n-1}_\D, \epsilon \one_\D) \in \V_\D$ (where the maximum is taken coordinate by coordinate), in order to avoid potential problems due to the singularity of the $\log$ near $0$.
The successive linear systems to compute the residue have the same structure as \eqref{sysmatrix}. We thus perform static condensation at each Newton iteration. As a stopping criterion, we compare the $l^\infty$ relative norm of the residue with a threshold $tol$.
If the method does not converge after $i_{max}$ iterations, we divide the time step by $2$, and we restart the resolution. 
When the method converges, one can proceed with the approximation of $\u^{n+1}_\D$, with an initial time step of $\min(\Delta t,  2 \delta t)$. 
In practice, we use $\epsilon = 10^{-11}$, $i_{max} = 50$, and $tol = 10^{-11}$.
%

The implementation of the nonlinear scheme relies on the computation of $\log(\omega_K)$ and $\log(\omega_\s)$. Since we have chosen $x_K$ to be the barycentre of $K$, we choose to approximate $\frac{1}{|K|} \int_K \e^{-\phi}$ by $\e^{-\phi(x_K)}$. Therefore, $\log(\omega_K)$ is computed as $\log(\e^{-\phi(x_K)}) = -\phi(x_K)$.
The same holds true for $\log(\omega_\s)$.

In the simulations shown below, we use arithmetic means for the functions $m$ and $f_{|\E_K|}$ of the reconstruction $r_K$ defined by \eqref{eq:rK}-\eqref{def:f}. For all $K\in\M$, and all $\u_K \in \V_K$, we thus consider 
\[
	r_K(\u_K) = \frac{1}{2} \left ( u_K + \frac{1}{|\E_K|}\sum_{\s \in \E_K}  u_\s \right ).
\]
This choice is close to the one advocated in~\cite[Eq.~(58)]{Cances:18}.
For a discussion on other choices of reconstructions, we refer to \cite[Section 6.2]{CCHHK:20}.

\subsection{Long-time behaviour of discrete solutions}

In this section, we present some numerical illustration of the long-time behaviour of discrete solutions.
We focus on a test-case from \cite{CaGui:17,CCHKr:18,CCHHK:20}. We consider homogeneous pure Neumann boundary conditions ($\Gamma^D = \emptyset$ and $g^N=0$), and zero loading term ($f = 0$).
The advective potential and diffusion tensor are set to
$\displaystyle \phi(x,y) = - x $ and 
$\displaystyle \Lambda = \begin{pmatrix}
l_x & 0 \\ 
0 & 1
\end{pmatrix} $ for $l_x>0$.
The exact solution is given by 
\[
	u(t,x,y) = C_1\e^{-\alpha t + \frac{x}{2}} \left ( 2\pi \cos( \pi x) + \sin(\pi x) \right )
	+ 2 C_1 \pi \e^{ x - \frac{1}{2}  }, 
\]
where $C_1> 0$ and $\displaystyle \alpha = l_x \left ( \frac{1}{4} + \pi^2  \right )$. Note that $u^{in}$ vanishes on 
$\{ x = 1 \}$, but for any $t > 0$, $u(t, \cdot) > 0$. The associated steady-state is 
\[
	u^\infty(x,y) = 2 C_1 \pi \e^{ x - \frac{1}{2}  }.
\]
Our experiments are performed using the following values: 
\[
	l_x =  10^{-2}  \qquad\text{ and } \qquad C_1 = 10^{-1} .
\]
We compute the solution on the time interval $[0,T_f]$, and we denote by $(\u^n_\D)_{1 \leq n \leq N_f}$ the corresponding approximate solution. Note that the number of time steps $N_f$ may differ between the linear and nonlinear schemes, because of the adaptive time step refinement procedure used for the nonlinear scheme.

We set $T_f = 350$ in order to see the complete evolution.
Since the long-time behaviour of the schemes does not depend on the size of the discretisation, we can explore the evolution using a large time step $\Delta t = 10^{-1}$. We perform the numerical experiments on two Kershaw meshes (see Figure \ref{fig:kershaw}) of sizes $0.02$ and $0.006$. 
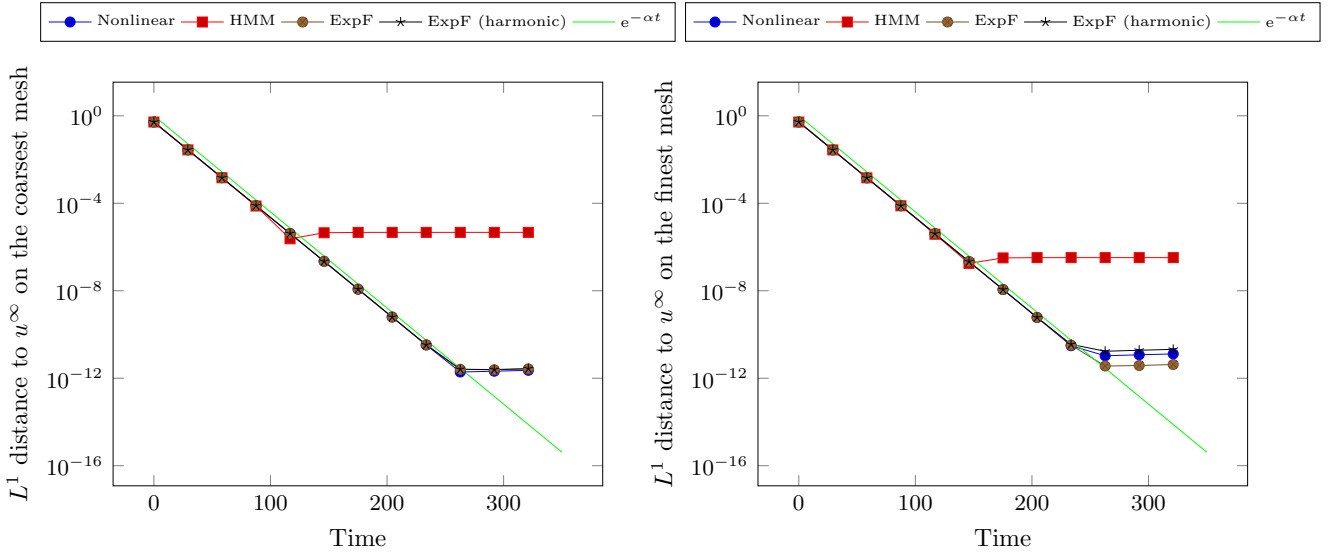
\begin{figure}[h]
\begin{minipage}[c]{.5\linewidth}
\begin{tikzpicture}[scale= 0.94]
        \begin{semilogyaxis}[
            legend style = { 
              at={(0.5,1.1)},
              anchor = south,
              tick label style={font=\footnotesize},
              legend columns=-1
            },
            ylabel=\small{$L^1$ distance to $u^\infty$ on the coarsest mesh},
            xlabel=\small{Time}
            ]
          \addplot table[x=Temps,y=Diff_L1] {graph_data/tps_long/mesh4_1_1/tps_nonlin};
          \addplot table[x=Temps,y=Diff_L1] {graph_data/tps_long/mesh4_1_1/tps_HMM};
          \addplot table[x=Temps,y=Diff_L1] {graph_data/tps_long/mesh4_1_1/tps_EFari}; 
          \addplot table[x=Temps,y=Diff_L1] {graph_data/tps_long/mesh4_1_1/tps_EFharmo};
          \addplot[green] coordinates {
			(0,1)
			(350,4.1484969e-16)
			};
          \legend{\tiny Nonlinear , \tiny HMM,\tiny ExpF,\tiny ExpF (harmonic) , \tiny $ \e^{- \alpha t} $ }          
	      \end{semilogyaxis}
      \end{tikzpicture}    
\end{minipage}
\begin{minipage}[c]{.5\linewidth}
\begin{tikzpicture}[scale= 0.94]
        \begin{semilogyaxis}[
            legend style = { 
              at={(0.5,1.1)},
              anchor = south,
              tick label style={font=\footnotesize},
              legend columns=-1
            },ylabel=\small{$L^1$ distance to $u^\infty$ on the finest mesh},xlabel=\small{Time}
          ]
          \addplot table[x=Temps,y=Diff_L1] {graph_data/tps_long/mesh4_1_4/tps_nonlin};
          \addplot table[x=Temps,y=Diff_L1] {graph_data/tps_long/mesh4_1_4/tps_HMM};
          \addplot table[x=Temps,y=Diff_L1] {graph_data/tps_long/mesh4_1_4/tps_EFari}; 
          \addplot table[x=Temps,y=Diff_L1] {graph_data/tps_long/mesh4_1_4/tps_EFharmo};
          \addplot[green] coordinates {
			(0,1)
			(350,4.1484969e-16)
			};
          \legend{\tiny Nonlinear , \tiny HMM, \tiny ExpF,\tiny ExpF (harmonic), \tiny $\e^{- \alpha t} $ }          
        \end{semilogyaxis}
      \end{tikzpicture}
\end{minipage}
\caption{\textbf{Long-time behaviour of discrete solutions.} Comparison of the long-time behaviour on Kershaw meshes for $T_f = 350$ and $\Delta t = 0.1$.}
\label{fig:longtime}
\end{figure}
In Figure \ref{fig:longtime}, we depict, as a function of time, the $L^1$ distance between $\u^n_\D$ and $u^\infty$ (the exact steady-state) computed as 
\[
		\sum_{K \in \M } |K| |u^n_K - u^\infty(x_K)|.
\]
We observe the exponential decay towards the steady-state, until some precision is reached. 
Note that for the HMM scheme, some saturation occurs at precision of magnitude $10^{-6}$ and $10^{-7}$: the scheme does not preserve the thermal equilibrium (see Remark \ref{rem:SG}). This saturation corresponds to the accuracy of the stationary scheme (see Section~\ref{acc}), so the threshold is lower on the refined mesh.
Note that one could also consider the error measure $\displaystyle \sum_{K \in \M } |K| |u^n_K - u^\infty_K|$ between the discrete solution and the discrete steady-state: this quantity decays exponentially, with a lower saturation of magnitude $10^{-12}$, corresponding to machine precision. The other (nonlinear and exponential fitting) schemes have the same decay rate, and the saturation occurs at machine precision.
For the four schemes, the rates of convergence are similar to the real one $\alpha$.
In particular, the use of harmonic averages in the exponential fitting scheme does not have any impact on the long-time behaviour.

\subsection{Positivity of discrete solutions}

We are now interested in the positivity of the discrete solutions. 
We use the following test-case with anisotropic diffusion and homogeneous pure Neumann boundary conditions.
We set $\Gamma^D = \emptyset$, $f=0$, $g^N = 0$, 
\[
\phi(x,y) = -\left (  (x-0.4)^2 +(y-0.6)^2   \right )\qquad \text{ and }\qquad 
\Lambda = \begin{pmatrix}
0.8 & 0 \\ 
0 & 1
\end{pmatrix} . \]
For the initial datum, we take
\[u^{in} = 10^{-3}  \,\1_{B} +   \1_{\Omega \setminus B},\]
where $B$ is the Euclidean ball  $\left \lbrace (x,y) \in \R^2 \mid (x-0.5)^2 + (y-0.5)^2 \leq 0.2^2 \right \rbrace $.
These data ensure that the solution $u$ is positive on $\R_{+}\times\Omega$. 
The experiment is performed on a tilted hexagonal-dominant mesh (see Figure \ref{fig:tiltedhexa}) of size $4.3 \cdot 10 ^{-3}$, made up of 4192 cells and 12512 edges. 
Since we deal with a diffusive phenomenon, the smallest values of $u$ are expected for small time, hence we perform the simulation with a relatively small final time $T_f = 5 . 10^{-4}$, alongside with a time step of $\Delta t = 10^{-5}$.

The results are collected in Table \ref{table:positivity}.
The cost is defined as the number of linear systems solved in order to compute the solution 
$(\u^n_\D)_{1 \leq n \leq N_f}$,
and the minimum values min\_cells and min\_edges are defined by 
\[
	\min \left \{ u_K^n \mid 1 \leq n \leq N_f, \ K \in \M \right \}
	\quad\text{ and }\quad
	 \min \left \{ u_\s^n \mid 1 \leq n \leq N_f, \ \s \in \E \right \}.
\]
The indicated number of negative unknowns is for the whole simulation. Here, $N_f=50$ also for the nonlinear scheme (no sub-division of the time step was needed in Newton's method).         
\begin{table}[h]
\center
\begin{tabular}{|c|c|c|c|c|c|c|} 
\hline 
  & cost  & min\_cells & min\_edges & \# negative unknowns \\ 
\hline 
		Nonlinear 	& 175 	& 9.93e-04	& 7.36e-04 	& 0 \\ 
\hline 
		HMM 			& 50  	& -5e-03 		& --7.74e-02 & {593} \\ 
\hline 
		ExpF 			& 50 	& -4.98e-03 	&-7.72e-02 & {590} \\ 
\hline 
	ExpF (harmonic) & 50 	& -4.98e-03 	& -7.74e-02& {588}\\ 
\hline 
\end{tabular} 
\caption{\textbf{Positivity of discrete solutions.} Numerical results for $T_f =5. 10^{-4}$ and $\Delta t= 10^{-5}$ on a tilted hexagonal-dominant mesh. 
At each time step, there are 4192 cell unknowns and 12512 edge unknowns.
}\label{table:positivity}
\end{table}
As expected, the nonlinear scheme has positive discrete solutions, whereas the linear ones exhibit a violation of positivity (the value of $\eta$ can have some influence on positivity; see~\cite{EGaHe:08}). 
Note that the use of harmonic averages for the exponential fitting scheme has no impact on the undershoots. 

We observe that the nonlinear scheme requires approximately $3.5$ times more linear system inversions than the linear schemes. However, this value depends strongly on the final time of simulation $T_f$.
Indeed, the number of linear systems solved at step $n$ decreases when $n$ increases.
The first time step costs 9 resolutions, but this number rapidly decreases as the solution approaches the steady-state (the second and the third time steps respectively cost 5 and 4 resolutions). 

\subsection{Accuracy of stationary solutions} \label{acc}

In this section, we aim at comparing the accuracy of the different schemes for the stationary problem. To do so, we define the discrete $L^2$-norm and $H^1$-seminorm errors as (i) the $L^2$-norm of the difference $u_\M-\Pi_\M(u)$, and (ii) the $|\cdot|_{1,\D}$-seminorm of the difference $\underline{u}_\D-\underline{\Pi}_\D(u)$, where $\u_\D$ is the discrete solution, and $\underline{\Pi}_\D(u)$ is the HFV interpolate of the continuous solution $u$ (computed as $\underline{\Pi}_\D(u)\approx((u(x_K))_{K\in\M}, (u(\overline{x}_\s))_{\s\in\E})$). In what follows, we reason in relative errors.

The nonlinear scheme is extended to a more general setting, in order to take into account a loading term $f \geq 0$ and mixed Dirichlet-Neumann boundary conditions ($|\Gamma^D|>0$) with $g^D>0$ and $g^N\geq 0$.
The scheme writes:
\begin{equation} \label{def:sch:nonlinsta}
	\begin{array}{c}
		\text{Find } \u_\D \in \V_\D \text{ positive such that } \Gu_\D(\u_\D) = \underline{0}_\D , 
	\end{array} 
\end{equation}
where $ \Gu_\D : \left(\V_\D\right)^{\star}_+  \to \V_\D $ is the vector field defined by
\begin{subequations}\label{def:Gsta}
	\begin{align}
	 & \G_K(\u_\D)  \defi \sum_{\s \in \E_K}\F^{{\rm nl}}_{K,\s}(\u_K) - \int_K f \quad&&\forall K\in\M, \label{def:Gsta:mesh}\\
         & \G_\s(\u_\D)  \defi - \left ( \F^{{\rm nl}}_{K,\s}(\u_K) + \F^{{\rm nl}}_{L,\s}(\u_L) \right ) \quad&&\forall \s = K \mid L  \in \E_{int},  \label{def:Gsta:edgeint}\\  
         & \G_\s(\u_\D)  \defi -\int_\s g^N- \F^{{\rm nl}}_{K,\s}(\u_K) \quad&&\forall\s\in\E_{ext}^N\text{ with }\M_{\s}=\{K\}, \label{def:Gsta:edgeextNeu}\\
         & \G_\s(\u_\D)  \defi\frac{1}{|\s|}\int_\s g^D - u_\s \quad&&\forall \s   \in \E_{ext}^D, \label{def:Gsta:edgeextDir}\\
         & \F^{{\rm nl}}_{K,\s}(\u_K) = r^K(\u_K) \sum_{\s' \in \E_K }  A_K^{\s\s'} \left ( \log (u_K ) + \phi(x_K) - \log (u_{\s'} ) - \phi(\overline{x}_{\s'}) \right ) \quad&&\forall K \in \M, \forall \s \in \E_K.
	\end{align}
\end{subequations}
The implementation of this scheme still relies on a Newton method similar to the one used for the evolution scheme.
It is here initialised with $\underline{\Pi}_\D(\omega)=\underline{\omega}_\D$.

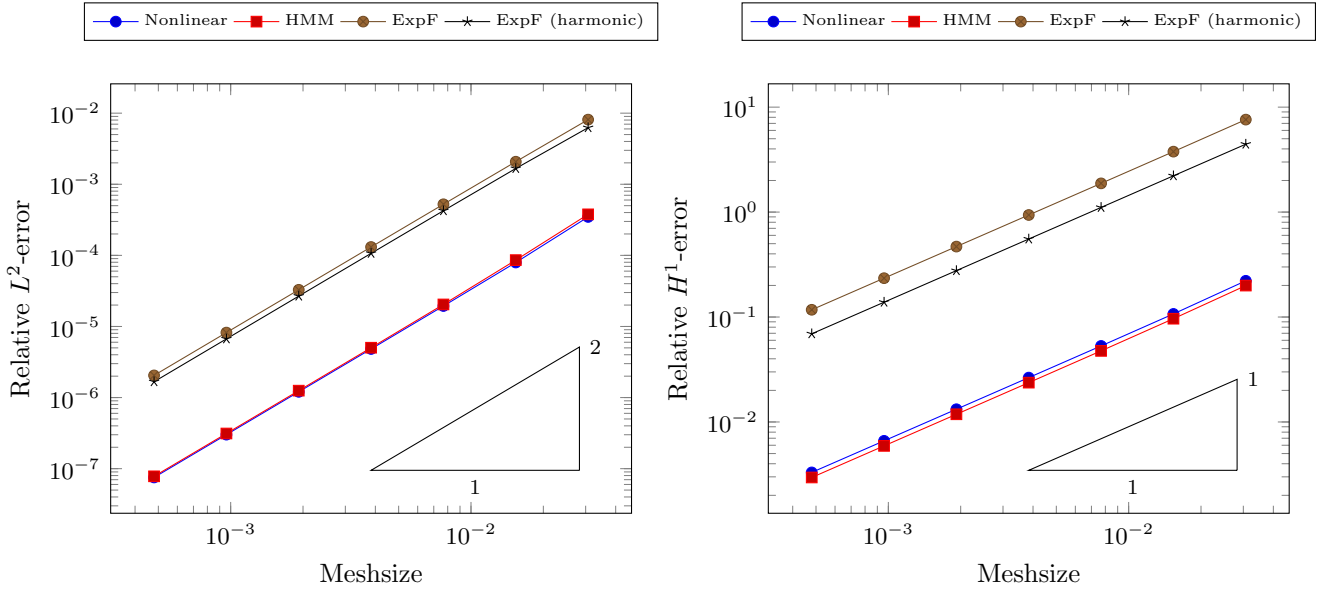
\begin{figure}[h]
\begin{minipage}[c]{.51\linewidth}
\begin{tikzpicture}[scale= 1]
        \begin{loglogaxis}[
            legend style = { 
              at={(0.5,1.1)},
              anchor = south,
              tick label style={font=\footnotesize},
              legend columns=-1
            },ylabel=\small{Relative $L^2$-error},xlabel=\small{Meshsize}
          ]
          \addplot table[x=meshsize,y=errl2] {graph_data/HMMtest/nonlin};
          \addplot table[x=meshsize,y=errl2] {graph_data/HMMtest/HMM};
          \addplot table[x=meshsize,y=errl2] {graph_data/HMMtest/expfittari}; 
          \addplot table[x=meshsize,y=errl2] {graph_data/HMMtest/expfittharmo};
          \logLogSlopeTriangle{0.90}{0.4}{0.1}{2}{black};
          \legend{\tiny Nonlinear , \tiny HMM,\tiny ExpF,\tiny ExpF (harmonic)}
        \end{loglogaxis}
      \end{tikzpicture}    
\end{minipage}
\begin{minipage}[c]{.51\linewidth}
\begin{tikzpicture}[scale= 1]
        \begin{loglogaxis}[
            legend style = { 
              at={(0.5,1.1)},
              anchor = south,
              tick label style={font=\footnotesize},
              legend columns=-1
            },ylabel=\small{Relative $H^1$-error},xlabel=\small{Meshsize}
          ]
          \addplot table[x=meshsize,y=errh1] {graph_data/HMMtest/nonlin};
          \addplot table[x=meshsize,y=errh1] {graph_data/HMMtest/HMM};
          \addplot table[x=meshsize,y=errh1] {graph_data/HMMtest/expfittari}; 
          \addplot table[x=meshsize,y=errh1] {graph_data/HMMtest/expfittharmo};
          \logLogSlopeTriangle{0.90}{0.4}{0.1}{1}{black};
          \legend{\tiny Nonlinear , \tiny HMM, \tiny ExpF,\tiny ExpF (harmonic)}          
        \end{loglogaxis}
      \end{tikzpicture}
\end{minipage}
\caption{\textbf{Accuracy of stationary solutions.} Relative errors in discrete $L^2$-norm and $H^1$-seminorm for the first test-case on triangular meshes.}
\label{fig:HMMtest}
\end{figure}

The first test-case we consider is the same as in \cite{BdVDM:11}. It is an isotropic problem, with $\Lambda = I_2$, $\Gamma^N = \emptyset$, $\phi(x,y) = -(2x + 3y)$, and exact solution  
\[
	u(x,y) = \left ( x - \e^{2(x-1)} \right ) \left (y- \e^{3(y-1)} \right ), 
\]
the other data $f$ and $g^D$ being set accordingly.
Note that for this test-case, the diffusion and advection terms are of the same order of magnitude. The numerical experiments are performed on the triangular mesh family (see Figure \ref{fig:triangular}).
The convergence results are depicted in Figure \ref{fig:HMMtest}. As expected, the two linear schemes are of order two in $L^2$-norm, and one in $H^1$-seminorm. The same holds for the nonlinear scheme, whose accuracy is rather the same as the classical HMM scheme, one order of magnitude better than the exponential fitting schemes.
On this test-case, the use of harmonic averages for the exponential fitting scheme does not have a significant impact.

The second test-case is an advection-dominated problem, with anisotropic diffusion and mixed Dirichlet-Neumann boundary conditions. We set $\Gamma^D = (\{0\}\times (0,1)) \cup (\{1\}\times (0,1))$, 
$\Gamma^N = ( (0,1)\times\{0\}) \cup ( (0,1)\times\{1\})$, $g^D=1$, $g^N=0$, and $f=0$. 
The diffusion tensor and the potential are defined by the following expressions:
\[
	\Lambda = \begin{pmatrix}
	1 & 0 \\ 
	0 & l_y
	\end{pmatrix}
	\qquad\text{ and }\qquad
	\phi(x,y) = \log \left( \frac{1}{v} + x \right ),
\]
with $l_y,v>0$. Note that the advection field 
$\displaystyle V^\phi = -
\begin{pmatrix}
\frac{v}{1+vx} \\ 
0
\end{pmatrix}
$ has a magnitude of order $v$ when $x$ is small. Thus, near the boundary $\{0\}\times [0,1]$, the problem is advection-dominated if $v$ is large enough.
Moreover, $\displaystyle \divergence(V^\phi) = \frac{v^2}{\left ( 1+vx\right ) ^2 } > 0$ and $V^\phi\cdot n=0$ on $\Gamma^N$, so the problem is coercive.
The exact solution is given by 
\[
	u(x,y) = \frac{v}{1+vx} \left (
	\frac{2v x }{2+v} \left ( \frac{1}{v} + \frac{x}{2} \right )
	+\frac{1}{v}
	 \right ).
\]
We perform our numerical experiments on the Cartesian mesh family (see Figure \ref{fig:cartesian}), with 
\[ 
l_y = 100 \qquad\text{ and } \qquad v = 200.
\] 

\begin{figure}[h]
\begin{minipage}[c]{.5\linewidth}
\begin{tikzpicture}[scale= 1]
        \begin{loglogaxis}[
            legend style = { 
              at={(0.5,1.1)},
              anchor = south,
              tick label style={font=\footnotesize},
              legend columns=-1
            },ylabel=\small{Relative $L^2$-error},xlabel=\small{Meshsize}
          ]
          \addplot table[x=meshsize,y=errl2] {graph_data/log(P)/nonlin};
          \addplot table[x=meshsize,y=errl2] {graph_data/log(P)/HMM};
          \addplot table[x=meshsize,y=errl2] {graph_data/log(P)/expfittari};
          \addplot table[x=meshsize,y=errl2] {graph_data/log(P)/expfittharmo};
          \logLogSlopeTriangle{0.90}{0.4}{0.1}{1}{black};
          \logLogSlopeTriangle{0.90}{0.4}{0.1}{2}{black};
          \legend{\tiny Nonlinear , \tiny HMM, \tiny ExpF,\tiny ExpF (harmonic)}  
        \end{loglogaxis}
      \end{tikzpicture}    
\end{minipage}
\begin{minipage}[c]{.5\linewidth}
\begin{tikzpicture}[scale= 1]
        \begin{loglogaxis}[
            legend style = { 
              at={(0.5,1.1)},
              anchor = south,
              tick label style={font=\footnotesize},
              legend columns=-1
            },ylabel=\small{Relative $H^1$-error},xlabel=\small{Meshsize}
          ]
          \addplot table[x=meshsize,y=errh1] {graph_data/log(P)/nonlin};
          \addplot table[x=meshsize,y=errh1] {graph_data/log(P)/HMM};
          \addplot table[x=meshsize,y=errh1] {graph_data/log(P)/expfittari};
          \addplot table[x=meshsize,y=errh1] {graph_data/log(P)/expfittharmo};
          \logLogSlopeTriangle{0.90}{0.4}{0.1}{1}{black};
          \logLogSlopeTriangle{0.90}{0.4}{0.1}{0.5}{black};
          \legend{\tiny Nonlinear , \tiny HMM, \tiny ExpF,\tiny ExpF (harmonic)}  
        \end{loglogaxis}
      \end{tikzpicture}
\end{minipage}
\caption{\textbf{Accuracy of stationary solutions.} Relative errors in discrete $L^2$-norm and $H^1$-seminorm for the second test-case on Cartesian meshes.}
\label{fig:log(P)}
\end{figure}
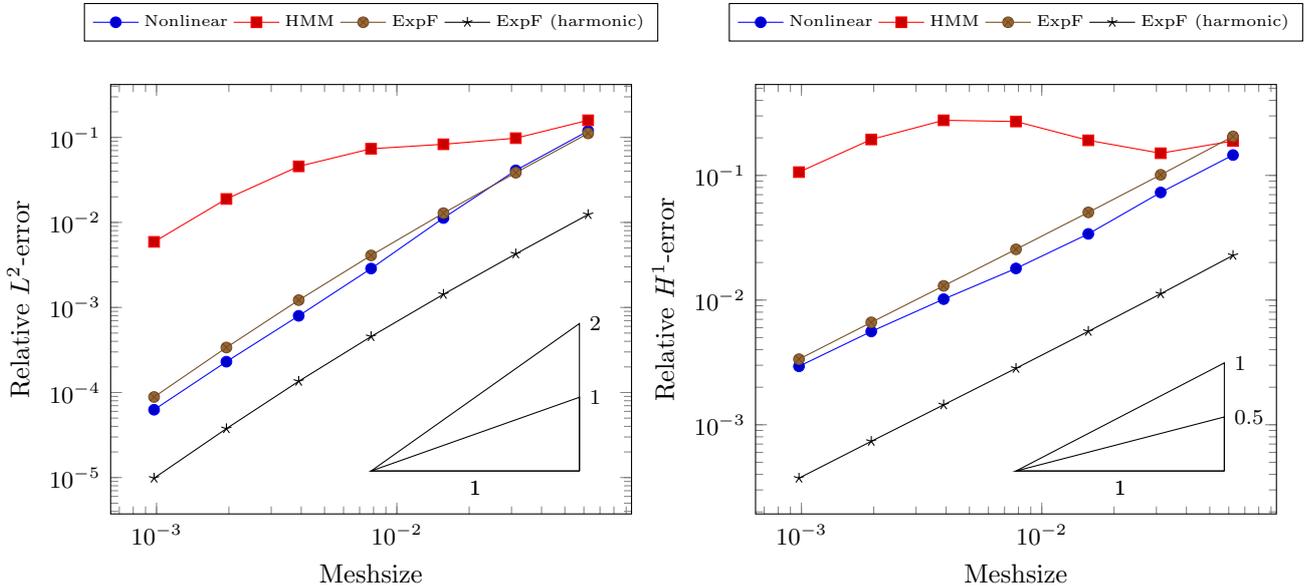
\noindent
The results are depicted in Figure \ref{fig:log(P)}.
They show that the HMM scheme suffers, most probably because of the fact that the advective term predominates over the diffusive term, at least in some part of the domain. The order of convergence of the HMM scheme is less than one in $H^1$-seminorm, and than two in $L^2$-norm. The other schemes converge with order one in $H^1$-seminorm, and two in $L^2$-norm. Moreover, on this test-case, their accuracy is better than that of the HMM scheme.
Notice that $\omega = \e^{-\phi} = \frac{v}{1+vx}$ has small variations (i.e., not exponential) in the cells, even if $v$ is large. Therefore, the diffusion tensor $\omega \Lambda$ of the problem in the $\rho$ unknown for the exponential fitting schemes is not that heterogeneous (locally). It could explain the good performances of the exponential fitting schemes in this case.
Moreover, on this test-case, using harmonic averages in the exponential fitting scheme gives a substantial gain in accuracy for both the $L^2$ and $H^1$ relative errors, of magnitude $10^1$.

\section{Conclusion} \label{con}

In this paper, by means of discrete entropy methods, we have analysed the long-time behaviour of three hybrid finite volume schemes for linear advection-diffusion equations.
We have proved that the solutions to all schemes converge exponentially fast in time towards the associated discrete steady-states.
Two schemes among the three are new, that are the (linear) exponential fitting scheme (adapting known ideas to the HFV context) and the nonlinear scheme, for which we have proved the existence of solutions.
All schemes can handle anisotropy and general meshes.
The two linear schemes can deal with general data and mixed Dirichlet-Neumann boundary conditions, however they do not preserve the positivity of solutions.
On the other hand, the nonlinear scheme preserves positivity and can be used in practice with general boundary conditions. However, at the moment, its asymptotic analysis is limited to systems that converge in time towards the thermal equilibrium, restricting the admissible data. We have finally validated our theoretical findings on different numerical tests, assessing long-time behaviour, positivity, and spatial accuracy of the schemes.

\appendix
\section{Functional inequalities}\label{Appendix_funcinequalities}

\subsection{Discrete Poincar\'e inequalities}

We recall the following hybrid discrete Poincar\'e inequalities (cf.~\cite[Lemmas B.25 and B.32, $p=2$] {DEGGH:18}).

\begin{prop}[Discrete Poincar\'e inequalities] \label{prop:poinca}
  Let $\D$ be a given discretisation of $\Omega$, with regularity parameter $\theta_\D$.
  There exists $C_{PW}>0$, only depending on $\Omega$, $d$, and $\theta_\D$ such that 
  \begin{equation} \label{poincawir}
    \forall \v_{\D} \in \V_{\D,0}^N , \qquad \|v_\M \|_{L^2(\Omega)} \leq {C_{PW}} | \v_{\D} |_{1,\D}.
  \end{equation}
  Assume that $|\Gamma^D| > 0$.
  Then, there exists $C_{P,\Gamma^D}>0$, only depending on $\Omega$, $d$, $\Gamma^D$, and $\theta_\D$ such that 
\begin{equation} \label{poincadir}
	\forall \v_{\D} \in \V_{\D,0}^D, \qquad \|v_\M \|_{L^2(\Omega)} \leq {C_{P,\Gamma^D}} | \v_{\D} |_{1,\D}.
\end{equation}
\end{prop}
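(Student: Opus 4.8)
The plan is to prove both inequalities by the classical \emph{translate-and-telescope} technique of Eymard--Gallou\"et--Herbin, adapted to the hybrid setting in which the face unknowns $(v_\s)_{\s\in\E}$ serve as intermediate values bridging neighbouring cells. The starting observation is that the only increments controlled by the seminorm are the cell-to-face jumps: recalling $|\v_K|_{1,K}^2=\sum_{\s\in\E_K}\frac{|\s|}{d_{K,\s}}(v_K-v_\s)^2$, across any interface $\s=K|L$ one has $v_K-v_L=(v_K-v_\s)+(v_\s-v_L)$, so that a discrete path joining two cells (or a cell and the boundary) decomposes into such jumps, each weighted by a geometric factor of order $\frac{|\s|}{d_{K,\s}}$. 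The mesh-regularity parameter $\theta_\D$ enters precisely through the control of these weights and through the uniform bound $|\E_K|\leq d\theta_\D^2$ on the number of faces per cell.

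For the Dirichlet inequality \eqref{poincadir}, I would anchor the telescoping at $\Gamma^D$. Since $|\Gamma^D|>0$, there is a cone of directions and a subset of positive measure of $\Gamma^D$ such that, for a.e.\ $x\in\Omega$ and a.e.\ admissible direction, the ray issued from $x$ exits $\Omega$ through a Dirichlet face $\s\in\E_{ext}^D$, where $v_\s=0$. Writing $v_K$ (for $x\in K$) as the telescoping sum of the cell-to-face jumps encountered along the ray, squaring, and applying a weighted Cauchy--Schwarz inequality (the weights being the lengths of the ray segments inside the successive pyramids $P_{K,\s}$) bounds $|v_\M(x)|^2$ by a sum of squared jumps. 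Integrating in $x$, averaging over the directional cone, and using that each face is swept by a uniformly controlled flux of rays, then yields $\|v_\M\|_{L^2(\Omega)}^2\leq C\,|\v_\D|_{1,\D}^2$, with $C$ depending on $\Omega$, $d$, $\Gamma^D$, and $\theta_\D$ only.

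For the Poincar\'e--Wirtinger inequality \eqref{poincawir}, the anchor is the spatial mean rather than the boundary. I would start from the identity $\|v_\M-\bar v\|_{L^2(\Omega)}^2=\frac{1}{2|\Omega|}\int_\Omega\int_\Omega|v_\M(x)-v_\M(y)|^2\,\dd x\,\dd y$ with $\bar v=\frac{1}{|\Omega|}\int_\Omega v_\M$, telescope the increment $v_\M(x)-v_\M(y)$ along the segment $[x,y]$ through the cell-to-face jumps it crosses, square, and apply Cauchy--Schwarz exactly as above. The zero-mass constraint defining $\V_{\D,0}^N$ forces $\bar v=0$, so the left-hand side is precisely $\|v_\M\|_{L^2(\Omega)}^2$; this is the step that removes the kernel of the seminorm (the constants), on which no such inequality can hold.

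The main obstacle is the geometric bookkeeping of the telescoping on \emph{general} polytopal meshes: cells may be non-convex and carry hanging nodes, so one must argue carefully that a generic ray (or segment) crosses a finite, uniformly bounded number of pyramids $P_{K,\s}$, that the cumulative segment lengths and the weights $\frac{|\s|}{d_{K,\s}}$ combine into constants depending on the mesh only through $\theta_\D$, and that the face-sweeping count is uniform. In the partial-Dirichlet case one must moreover quantify the measure of rays reaching $\Gamma^D$, which is exactly why the constant in \eqref{poincadir} retains a dependence on $\Gamma^D$. Once this measure-theoretic and combinatorial control is in place---this is the content of \cite[Lemmas B.25 and B.32]{DEGGH:18}---the two inequalities follow from the same computation, the sole difference being whether the discrete path terminates at a Dirichlet face or is closed up against the spatial mean.
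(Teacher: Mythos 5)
The paper offers no proof of this proposition: it simply recalls the result from \cite[Lemmas B.25 and B.32, $p=2$]{DEGGH:18}, and your sketch of the translate-and-telescope argument is a faithful outline of exactly the technique used there, with the hard geometric bookkeeping correctly deferred to those same lemmas. Your proposal is therefore consistent with (and more informative than) the paper's treatment, and I see no gap beyond the technical details you explicitly delegate to the cited reference.
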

{

\subsection{Logarithmic Sobolev inequalities}
  
In this section, we derive logarithmic Sobolev inequalities on a bounded domain, in the continuous setting. The intermediate results of Proposition~\ref{prop:nearly_logSob} below will be useful in the discrete setting. In the following, $\mu$ is a probability measure on the bounded domain $\Omega$, and the space $L^q_\mu(\Omega)$ denotes the space endowed with the norm $\|f\|_{L^q_\mu(\Omega)}^q =\int_\Omega |f|^q\dd\mu$. We start with a preliminary lemma, which is an adaptation of part of the proof of \cite[Theorem~6.1.22]{DS:89} (see also~\cite{GZ:02}). We recall that $\Phi_1(s)=s\log(s)-s+1$.
\begin{lemma}\label{lem:tech_log_sob}
 For all $t\in\mathbb{R}$ and $\psi\in L^2_\mu(\Omega)$ such that $\|\psi\|_{L^2_\mu(\Omega)} = 1$, one has
 \[
  \int_\Omega\Phi_1\left((1+t\psi)^2\right)\dd \mu \leq t^2\int_\Omega\psi^2\log(\psi^2)\dd\mu + (1+t^2)\log(1+t^2) +(1+|\langle\psi\rangle_\mu|)t^2,
 \]
 where $\langle\psi\rangle_\mu \defi \int_\Omega\psi\,\dd\mu$. 
\end{lemma}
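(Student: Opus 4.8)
The plan is to reduce the estimate to a bound on $\int_\Omega g^2\log(g^2)\,\dd\mu$, where $g\defi 1+t\psi$, and then to split the latter into a ``main'' part carrying the leading term $t^2\int_\Omega\psi^2\log(\psi^2)\,\dd\mu$ and a controllable remainder. First I would record the exact identity obtained by expanding $\Phi_1(g^2)=g^2\log(g^2)-g^2+1$ and integrating: since $\int_\Omega g^2\,\dd\mu=1+2t\langle\psi\rangle_\mu+t^2$ (using $\|\psi\|_{L^2_\mu(\Omega)}=1$), one gets
\[
\int_\Omega\Phi_1\big((1+t\psi)^2\big)\,\dd\mu=\int_\Omega g^2\log(g^2)\,\dd\mu-2t\langle\psi\rangle_\mu-t^2.
\]
It thus remains to bound $\int_\Omega g^2\log(g^2)\,\dd\mu$. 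The crucial auxiliary quantity is $q\defi 1+t^2\psi^2$, for which $g^2=q+2t\psi$, $0\le g^2\le 2q$, and $\int_\Omega q\,\dd\mu=1+t^2$.

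The heart of the argument is obtaining the sharp constant $1$ (and not $2$) in front of $t^2\int_\Omega\psi^2\log(\psi^2)\,\dd\mu$; naive endpoint/chord bounds on the convex map $x\mapsto x\log(x)$ over $[0,2q]$ lose precisely a factor $2$ here. To this end I would bound $\int_\Omega q\log(q)\,\dd\mu$ through the entropy functional $\mathrm{Ent}_\mu(h)\defi\int_\Omega h\log(h)\,\dd\mu-\big(\int_\Omega h\,\dd\mu\big)\log\big(\int_\Omega h\,\dd\mu\big)$, exploiting that $\mathrm{Ent}_\mu$ is positively homogeneous, vanishes on constants, and is \emph{subadditive} (being, via the Donsker--Varadhan variational formula $\mathrm{Ent}_\mu(h)=\sup\{\int_\Omega h\,v\,\dd\mu:\int_\Omega \e^{v}\,\dd\mu\le 1\}$, a supremum of linear functionals). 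Applied to $q=1+t^2\psi^2$, this gives
\[
\mathrm{Ent}_\mu(q)\le\mathrm{Ent}_\mu(1)+t^2\,\mathrm{Ent}_\mu(\psi^2)=t^2\int_\Omega\psi^2\log(\psi^2)\,\dd\mu,
\]
where I used $\int_\Omega\psi^2\,\dd\mu=1$, so that $\int_\Omega q\log(q)\,\dd\mu\le t^2\int_\Omega\psi^2\log(\psi^2)\,\dd\mu+(1+t^2)\log(1+t^2)$, which produces the two leading terms exactly.

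Next I would transfer this bound from $q\log(q)$ to $g^2\log(g^2)$ by a second-order Taylor expansion of $x\mapsto x\log(x)$ at $x=q$ with integral remainder. Writing the remainder as $\int_0^1(1-\tau)\tfrac{(g^2-q)^2}{(1-\tau)q+\tau g^2}\,\dd\tau$ and bounding the denominator below by $(1-\tau)q$ (legitimate since $g^2\ge 0$), the singularity of the logarithm at $g=0$ becomes harmless and one obtains the pointwise inequality
\[
g^2\log(g^2)\le q\log(q)+(1+\log(q))\,2t\psi+\frac{(2t\psi)^2}{q},
\]
whose last term is $\le 4t^2\psi^2$. Upon integration, the contribution $\int_\Omega 2t\psi\,\dd\mu=2t\langle\psi\rangle_\mu$ cancels the $-2t\langle\psi\rangle_\mu$ in the identity above, the quadratic remainder integrates to $O(t^2)$ (since $\int_\Omega\psi^2\,\dd\mu=1$), and the cross term $\int_\Omega 2t\psi\log(q)\,\dd\mu$ is $O(t^2)$ via the elementary estimate $\log(1+t^2\psi^2)\le|t\psi|$. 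Collecting these contributions yields the claimed inequality up to the precise shape of its lower-order term.

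The main obstacle I anticipate lies purely at the level of the lower-order $O(t^2)$ terms: the clean ingredients above deliver the two leading contributions with the correct constants, but squeezing the remainder down to exactly $(1+|\langle\psi\rangle_\mu|)t^2$ (rather than a larger universal multiple of $t^2$) requires keeping track of signs in the cross term $\int_\Omega 2t\psi\log(q)\,\dd\mu$ and optimizing the remainder estimate, instead of bounding crudely. A secondary point deserving care is justifying the use of $\mathrm{Ent}_\mu$ and of its subadditivity on a general probability space, together with checking that all integrals are well defined (the right-hand side being $+\infty$, hence the inequality trivial, whenever $\int_\Omega\psi^2\log(\psi^2)\,\dd\mu=+\infty$).
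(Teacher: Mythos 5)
Your route is genuinely different from the paper's proof, which regularises the integrand ($\Phi_1((1+t\psi)^2+\delta)$), differentiates twice in $t$, bounds the second derivative by $2\delta+2|\langle\psi\rangle_\mu|+2$ using $\log x\le x-1$ and $\tfrac{x}{\delta+x}\le1$, and integrates twice before letting $\delta\to0$. Your extraction of the two leading terms via homogeneity and subadditivity of $\mathrm{Ent}_\mu$ applied to $q=1+t^2\psi^2$ is correct and elegant. However, the transfer from $q\log q$ to $g^2\log(g^2)$ contains a genuine gap, and it is not merely a matter of ``optimizing'' lower-order terms, because the stated constant is attained at order $t^2$. Indeed, take $\psi$ with $\psi^2\equiv1$ and $\langle\psi\rangle_\mu=0$: then both sides of the lemma equal $2t^2+O(t^4)$, so any proof must reproduce the coefficient $2$ exactly. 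Your Taylor-remainder bound $\int_0^1(1-\tau)\tfrac{(2t\psi)^2}{(1-\tau)q+\tau g^2}\,\dd\tau\le\tfrac{4t^2\psi^2}{q}$, obtained by dropping $\tau g^2$ from the denominator, doubles the natural second-order coefficient $\tfrac{2t^2\psi^2}{q}$; in the example above your chain yields $t^2+0+4t^2-t^2=4t^2$ for the right-hand side against the required $2t^2$. Worse, the ``ideal'' coefficient $\tfrac{(2t\psi)^2}{2q}$ is not a valid pointwise upper bound for the remainder either: at a point where $g=0$ one has $q=2$, exact remainder $=2$, but $\tfrac{(2t\psi)^2}{2q}=1$. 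So no simple sharpening of the denominator estimate closes the gap. In general your bounds give a lower-order term of about $5t^2$ (cross term $\le2t^2$ via $\log(1+x^2)\le|x|$, remainder $\le4t^2$, minus the $t^2$ from your opening identity), i.e.\ a strictly weaker inequality than the one claimed.

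For the paper's downstream use (Proposition~\ref{prop:nearly_logSob}, Corollary~\ref{cor:logsob}, and the discrete log-Sobolev inequalities) only a generic $O(t^2)$ lower-order term matters, so your weaker version would in fact suffice there; but as a proof of the lemma as stated it is incomplete. The reason the paper's argument succeeds where yours loses constants is that the sharp cancellation occurs at the level of $f_\delta''$, where the term $4\int_\Omega\psi^2\tfrac{(1+t\psi)^2}{\delta+(1+t\psi)^2}\,\dd\mu-\tfrac{4t^2}{1+t^2}-2$ is bounded by $2+2|\langle\psi\rangle_\mu|+2\delta$ with no loss, and the $\delta$-regularisation neutralises the singularity at $g=0$ that your integral-remainder estimate has to fight pointwise.
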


\begin{proof}

 Let us define, for $\delta>0$, \[f_\delta(t) = \int_\Omega\Phi_1\left((1+t\psi)^2+\delta\right)\dd \mu - t^2\int_\Omega\psi^2\log(\psi^2)\dd\mu - (1+t^2)\log(1+t^2) \,.\]
 Differentiating $f_\delta$ yields
 \[
  f_\delta'(t) = 2\int_\Omega(1+t\psi)\psi\log\left((1+t\psi)^2+\delta\right)\dd\mu -2t\int_\Omega\psi^2\log(\psi^2)\dd\mu - 2t\log(1+t^2) - 2t\,.
 \]
 In particular, $f_\delta'(0) = 2\log(1+\delta)\langle\psi\rangle_\mu$. Differentiating once more, and using that $\|\psi\|_{L^2_\mu(\Omega)}^2 = 1$, we obtain
\[
  f_\delta''(t) = 2\int_\Omega\psi^2\log\left(\frac{(1+t\psi)^2+\delta}{(1+t^2)\psi^2}\right)\dd\mu  + 4\int_\Omega\psi^2\frac{(1+t\psi)^2}{\delta+(1+t\psi)^2}\dd\mu -\frac{4t^2}{1+t^2}-2\,.
 \]
 Therefore, using that $\log(x)\leq x-1$ in the first  term, that $\frac{x}{\delta+x}\leq 1$ in the second, together with the fact that $\mu$ is a probability measure and that $\|\psi\|_{L^2_\mu(\Omega)}^2 = 1$, one gets
 \[
 f_\delta''(t)\leq \frac{2\delta}{1+t^2} + \frac{4t}{1+t^2}\langle\psi\rangle_\mu + 4-\frac{4t^2}{1+t^2}-2\leq 2\delta + 2|\langle\psi\rangle_\mu|+2\,.
 \]
 One concludes by integrating this inequality twice between $0$ and $t$, using that $f_{\delta}(0)=\Phi_1(1+\delta)$, and letting $\delta\to0$.
\end{proof}
\begin{prop}\label{prop:nearly_logSob}
For any $\xi\in L^q_\mu(\Omega)$ with $q>2$, one has 
 \begin{equation}\label{eq:nearly_logSob1}
  \int_\Omega\xi^2\log\left(\frac{\xi^2}{\|\xi\|_{L^2_\mu(\Omega)}^2}\right)\mathrm{d}\mu\leq \frac{q}{q-2}\|\xi-\langle\xi\rangle_\mu\|_{L^q_\mu(\Omega)}^2 + \frac{q-4}{q-2}\|\xi-\langle\xi\rangle_\mu\|_{L^2_\mu(\Omega)}^2,
 \end{equation}
 where $\langle\xi\rangle_\mu \defi \int_\Omega\xi\,\dd\mu$. Besides, one also has
 \begin{equation}\label{eq:nearly_logSob2}
  \int_\Omega\Phi_1(\xi^2)\,\mathrm{d}\mu\leq \frac{q}{q-2}\|\xi-1\|_{L^q_\mu(\Omega)}^2 + \frac{2q-6}{q-2}\|\xi-1\|_{L^2_\mu(\Omega)}^2 + \Phi_1\left(1+\|\xi-1\|_{L^2_\mu(\Omega)}^2\right)\,.
 \end{equation}
\end{prop}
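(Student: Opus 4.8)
The plan is to deduce both inequalities from the Rothaus-type estimate packaged in Lemma~\ref{lem:tech_log_sob}, combined with a single elementary interpolation bound on the relative entropy. Throughout I will abbreviate $\mathrm{Ent}_\mu(f)\defi\int_\Omega f\log\left(f/\langle f\rangle_\mu\right)\dd\mu$ for $f\geq0$, so that the left-hand side of~\eqref{eq:nearly_logSob1} is exactly $\mathrm{Ent}_\mu(\xi^2)$, while for~\eqref{eq:nearly_logSob2} I will use the identity $\int_\Omega\Phi_1(\xi^2)\dd\mu=\mathrm{Ent}_\mu(\xi^2)+\Phi_1\left(\|\xi\|_{L^2_\mu(\Omega)}^2\right)$, which follows from $\Phi_1(s)=s\log s-s+1$.

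The first step is to prove the interpolation bound: for every real $f\in L^q_\mu(\Omega)$,
\[
\mathrm{Ent}_\mu(f^2)\leq \frac{2q\,\|f\|_{L^2_\mu(\Omega)}^2}{q-2}\log\left(\frac{\|f\|_{L^q_\mu(\Omega)}}{\|f\|_{L^2_\mu(\Omega)}}\right)\leq \frac{2q}{q-2}\,\|f\|_{L^2_\mu(\Omega)}\left(\|f\|_{L^q_\mu(\Omega)}-\|f\|_{L^2_\mu(\Omega)}\right).
\]
I would obtain the first inequality by applying Jensen's inequality to the concave function $\log$ against the probability measure $\dd\nu\defi|f|^2\,\|f\|_{L^2_\mu(\Omega)}^{-2}\dd\mu$ and the test function $|f|^{q-2}$, and then rearranging; the second uses $\log s\leq s-1$ together with $\|f\|_{L^q_\mu(\Omega)}\geq\|f\|_{L^2_\mu(\Omega)}$, valid since $\mu$ is a probability measure and $q>2$.

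Next I would convert the Lemma into the two reductions. For~\eqref{eq:nearly_logSob1}, set $m\defi\langle\xi\rangle_\mu$ and $g\defi\xi-m$, so $\langle g\rangle_\mu=0$ (the case $g=0$ is trivial). When $m\neq0$ I would use the homogeneity $\mathrm{Ent}_\mu(\lambda\,\cdot)=\lambda\,\mathrm{Ent}_\mu(\cdot)$ to write $\mathrm{Ent}_\mu(\xi^2)=m^2\,\mathrm{Ent}_\mu((1+t\psi)^2)$ with $\psi\defi\pm g/\|g\|_{L^2_\mu(\Omega)}$ (the sign chosen so that the factored constant is positive) and $t\defi\|g\|_{L^2_\mu(\Omega)}/|m|$. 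Since then $\langle\psi\rangle_\mu=0$ and $\|\psi\|_{L^2_\mu(\Omega)}=1$, the two $(1+t^2)\log(1+t^2)$ contributions cancel and the Lemma collapses to $\mathrm{Ent}_\mu((1+t\psi)^2)\leq\mathrm{Ent}_\mu((t\psi)^2)+2t^2$, hence, after multiplying by $m^2$,
\[
\mathrm{Ent}_\mu(\xi^2)\leq \mathrm{Ent}_\mu(g^2)+2\|g\|_{L^2_\mu(\Omega)}^2.
\]
Inserting the interpolation bound with $f=g$ and writing $b\defi\|g\|_{L^2_\mu(\Omega)}$, $B\defi\|g\|_{L^q_\mu(\Omega)}$, the target~\eqref{eq:nearly_logSob1} reduces, after multiplication by $q-2$, to $2qbB-4b^2\leq qB^2+(q-4)b^2$, i.e.\ to $0\leq q(B-b)^2$, which closes this case.

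Finally, for~\eqref{eq:nearly_logSob2} I would apply the Lemma with the fixed reference constant $1$ in place of the mean: writing $h\defi\xi-1$ and, for $h\neq0$, $\psi\defi h/\|h\|_{L^2_\mu(\Omega)}$, $t\defi\|h\|_{L^2_\mu(\Omega)}$, and re-expressing $(1+t^2)\log(1+t^2)=\Phi_1\left(1+\|h\|_{L^2_\mu(\Omega)}^2\right)+\|h\|_{L^2_\mu(\Omega)}^2$, the Lemma yields
\[
\int_\Omega\Phi_1(\xi^2)\dd\mu\leq \mathrm{Ent}_\mu(h^2)+\Phi_1\left(1+\|h\|_{L^2_\mu(\Omega)}^2\right)+2\|h\|_{L^2_\mu(\Omega)}^2+|\langle h\rangle_\mu|\,\|h\|_{L^2_\mu(\Omega)}.
\]
Bounding $|\langle h\rangle_\mu|\leq\|h\|_{L^1_\mu(\Omega)}\leq\|h\|_{L^2_\mu(\Omega)}$ and inserting the interpolation bound with $f=h$, the remaining inequality again reduces (after multiplication by $q-2$) to $2bB\leq B^2+b^2$, i.e.\ $0\leq(B-b)^2$, with $b,B$ now the $L^2$ and $L^q$ norms of $h$. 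The conceptual crux—and the only genuine work—is the interpolation bound of the second step; once it is in hand, the role of the Lemma is merely to supply the Rothaus reduction, and each estimate is closed exactly by a perfect square. I expect the main obstacle to be purely bookkeeping: correctly tracking the cancellations from $\Phi_1$, keeping the degenerate cases ($g=0$, $h=0$, $m=0$) under control, and fixing the sign of the factored constant in the homogeneity argument.
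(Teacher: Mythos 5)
Your proposal is correct and follows essentially the same route as the paper: both reduce to the centred (resp.\ shifted) variable via Lemma~\ref{lem:tech_log_sob}, bound the resulting entropy term by Jensen's inequality with respect to the tilted probability measure $\phi^2\|\phi\|_{L^2_\mu(\Omega)}^{-2}\,\dd\mu$, and conclude with $\log x\leq x-1$. The only (harmless) difference is that you apply $\log x \leq x-1$ to the ratio of norms rather than to its square, which gives a slightly sharper intermediate interpolation bound and makes each case close via a perfect square rather than with equality.
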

\begin{proof}
 \begin{itemize}
  \item[i)] Assume that $\langle\xi\rangle_\mu\neq 0$, and take $t$ and $\psi$ such that $\xi = \langle\xi\rangle_\mu(1+t\psi)$ and $\|\psi\|_{L^2_\mu(\Omega)} = 1$. In particular, $\langle\psi\rangle_\mu = 0$, and $1+t^2=\frac{\|\xi\|_{L^2_\mu(\Omega)}^2}{\langle\xi\rangle_\mu^2}$. Using Lemma~\ref{lem:tech_log_sob}, a somewhat tedious but straightforward computation yields
  \[
  \int_\Omega\xi^2\log\left(\frac{\xi^2}{\|\xi\|_{L^2_\mu(\Omega)}^2}\right)\dd \mu\leq\int_{\Omega}(\xi-\langle\xi\rangle_\mu)^2\log\left(\frac{(\xi-\langle\xi\rangle_\mu)^2}{\|\xi-\langle\xi\rangle_\mu\|_{L^2_\mu(\Omega)}^2}\right)\dd\mu+2\|\xi-\langle\xi\rangle_\mu\|_{L^2_\mu(\Omega)}^2\,.
 \]
  Observe that the last inequality also holds if $\langle\xi\rangle_\mu= 0$. Let $\phi  = \xi-\langle\xi\rangle_\mu$. Then,
  \[
   \int_\Omega\xi^2\log\left(\frac{\xi^2}{\|\xi\|_{L^2_\mu(\Omega)}^2}\right)\dd \mu\leq\frac{2}{q-2}\|\phi\|_{L^2_\mu(\Omega)}^2\int_{\Omega}\frac{\phi^2}{\|\phi\|_{L^2_\mu(\Omega)}^2}\log\left(\frac{\phi^{q-2}}{\|\phi\|_{L^2_\mu(\Omega)}^{q-2}}\right)\dd\mu+2\|\phi\|_{L^2_\mu(\Omega)}^2\,.
  \]
 Therefore, by Jensen's inequality for the probability measure $\frac{\phi^2}{\|\phi\|_{L^2_\mu(\Omega)}^2}\dd \mu$ applied to the concave function $\log$, one obtains
  \begin{multline*}
  \int_\Omega\xi^2\log\left(\frac{\xi^2}{\|\xi\|_{L^2_\mu(\Omega)}^2}\right)\dd \mu
  \leq\frac{2}{q-2}\|\phi\|_{L^2_\mu(\Omega)}^2\log\left(\frac{\|\phi\|_{L^q_\mu(\Omega)}^q}{\|\phi\|_{L^2_\mu(\Omega)}^q}\right)+2\|\phi\|_{L^2_\mu(\Omega)}^2\\
 =\frac{q}{q-2}\|\phi\|_{L^2_\mu(\Omega)}^2\log\left(\frac{\|\phi\|_{L^q_\mu(\Omega)}^2}{\|\phi\|_{L^2_\mu(\Omega)}^2}\right)+2\|\phi\|_{L^2_\mu(\Omega)}^2\,,
 \end{multline*}
 and one concludes using that $\log(x)\leq x-1$.
  \item[ii)] Take $t$ and $\psi$ such that $\xi = 1+t\psi$ and $\|\psi\|_{L^2_\mu(\Omega)} = 1$. Remark that $t=\|\xi-1\|_{L^2_\mu(\Omega)}$. Using that $|\langle\psi\rangle_\mu|\leq \|\psi\|_{L^2_\mu(\Omega)}=1$, Lemma~\ref{lem:tech_log_sob} yields
  \[
  \int_\Omega\Phi_1(\xi^2)\,\mathrm{d}\mu - \Phi_1\left(1+\|\xi-1\|_{L^2_\mu(\Omega)}^2\right)\leq\int_{\Omega}(\xi-1)^2\log\left(\frac{(\xi-1)^2}{\|\xi-1\|_{L^2_\mu(\Omega)}^2}\right)\dd\mu+3\|\xi-1\|_{L^2_\mu(\Omega)}^2\,.
 \]
  Letting $\phi = \xi-1$, the proof goes on as for i). 
 \end{itemize}
\end{proof}

\noindent
From there, logarithmic Sobolev inequalities are immediate consequences of Poincaré--Sobolev inequalities, of~\cite[Lemma 5.2]{CCHHK:20}, and of the fact that $\Phi_1(1+s) \leq s\log(1+s)$.
\begin{corollary}[Logarithmic Sobolev inequalities]\label{cor:logsob}
Assume that $\mu$ has a density (still denoted by $\mu$) with respect to the Lebesgue measure such that $0<\mu_\flat\leq \mu(x) \leq \mu_\sharp$ for a.e.~$x\in\Omega$. Then, for any $\xi\in H^1(\Omega)$, one has 
\[
 \int_\Omega\xi^2\log\left(\frac{\xi^2}{\|\xi\|_{L^2_\mu(\Omega)}^2}\right)\mathrm{d}\mu\leq C(\Omega,d,\mu_\flat, \mu_\sharp) \|\nabla\xi\|^2_{L^2_\mu(\Omega;\R^d)}\,.
\]
Besides, if $|\Gamma^D|>0$ and $\xi-1\in H^{1,D}_0(\Omega)\defi\{v\in H^1(\Omega)\mid v=0\text{ on }\Gamma^D\}$, then 
\[
  \int_\Omega\Phi_1(\xi^2)\,\mathrm{d}\mu\leq C'(\Omega,d,\mu_\flat, \mu_\sharp) \left(1+\log\left(1+\|\xi-1\|_{L^2_\mu(\Omega)}^2\right)\right)\|\nabla\xi\|^2_{L^2_\mu(\Omega;\R^d)}\,.
\]
\end{corollary}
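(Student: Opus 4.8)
The plan is to obtain both inequalities by feeding the ``nearly logarithmic Sobolev'' estimates~\eqref{eq:nearly_logSob1}--\eqref{eq:nearly_logSob2} of Proposition~\ref{prop:nearly_logSob} into classical (continuous) Poincaré--(Wirtinger--)Sobolev inequalities, using three elementary ingredients: the equivalence of weighted and unweighted Lebesgue norms granted by $0<\mu_\flat\leq\mu\leq\mu_\sharp$ (so that $\mu_\flat^{1/p}\|f\|_{L^p(\Omega)}\leq\|f\|_{L^p_\mu(\Omega)}\leq\mu_\sharp^{1/p}\|f\|_{L^p(\Omega)}$ for every $p$), the Sobolev embedding $H^1(\Omega)\hookrightarrow L^q(\Omega)$ for a fixed exponent $q>2$ (any $q<\infty$ when $d=2$, and $q=2^*=\frac{2d}{d-2}$ when $d\ge3$), and the scalar inequality $\Phi_1(1+s)=(1+s)\log(1+s)-s\leq s\log(1+s)$, valid for $s\ge0$ since $\log(1+s)\leq s$.

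For the first inequality, I would start from~\eqref{eq:nearly_logSob1} with the fixed exponent $q>2$ above, so that it remains to bound $\|\xi-\langle\xi\rangle_\mu\|_{L^q_\mu(\Omega)}$ and $\|\xi-\langle\xi\rangle_\mu\|_{L^2_\mu(\Omega)}$ by $\|\nabla\xi\|_{L^2_\mu(\Omega;\R^d)}$. The subtlety is that Proposition~\ref{prop:nearly_logSob} centres $\xi$ around its \emph{weighted} mean $\langle\xi\rangle_\mu$, whereas the Poincaré--Sobolev inequalities are stated around the Lebesgue mean $\bar\xi\defi\frac{1}{|\Omega|}\int_\Omega\xi$. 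To reconcile the two, I would invoke~\cite[Lemma 5.2]{CCHHK:20} for the $L^2_\mu$ term (the function $\xi-\langle\xi\rangle_\mu$ has zero $\mu$-mean, hence its weighted $L^2$ norm is controlled by twice that of its Lebesgue-mean-centred version, which equals $\xi-\bar\xi$), and a triangle-inequality plus Cauchy--Schwarz argument for the $L^q_\mu$ term, bounding $|\bar\xi-\langle\xi\rangle_\mu|=\big|\int_\Omega(\xi-\bar\xi)(\tfrac1{|\Omega|}-\mu)\big|\leq\|\xi-\bar\xi\|_{L^2(\Omega)}\|\tfrac1{|\Omega|}-\mu\|_{L^2(\Omega)}$. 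After these reductions, both terms are controlled by $\|\xi-\bar\xi\|_{L^q(\Omega)}$ and $\|\xi-\bar\xi\|_{L^2(\Omega)}$, and a Poincaré--Sobolev (resp.\ Poincaré--Wirtinger) inequality turns these into $\|\nabla\xi\|_{L^2(\Omega)}$, which the norm equivalence converts into $\|\nabla\xi\|_{L^2_\mu(\Omega;\R^d)}$; collecting constants yields the claim with a constant depending only on $\Omega$, $d$, $\mu_\flat$, $\mu_\sharp$.

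For the second inequality, the situation is cleaner because the function $\xi-1$ vanishes on $\Gamma^D$ and $|\Gamma^D|>0$, so no mean-centring is needed: the Poincaré--Sobolev inequality with homogeneous Dirichlet trace on $\Gamma^D$ gives directly $\|\xi-1\|_{L^q(\Omega)}+\|\xi-1\|_{L^2(\Omega)}\leq C\|\nabla\xi\|_{L^2(\Omega)}$ (using $\nabla(\xi-1)=\nabla\xi$). Starting from~\eqref{eq:nearly_logSob2}, the first two terms are therefore bounded by $C\|\nabla\xi\|^2_{L^2_\mu(\Omega;\R^d)}$ after norm equivalence. For the last term, I would apply $\Phi_1(1+s)\leq s\log(1+s)$ with $s\defi\|\xi-1\|_{L^2_\mu(\Omega)}^2$, and then use the Poincaré bound $s\leq C\|\nabla\xi\|^2_{L^2_\mu(\Omega;\R^d)}$ once more to extract a factor $\|\nabla\xi\|^2_{L^2_\mu(\Omega;\R^d)}$, leaving behind exactly the logarithmic factor $\log(1+\|\xi-1\|_{L^2_\mu(\Omega)}^2)$. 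Summing the three contributions then produces the stated bound.

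The main obstacle I anticipate is the mean discrepancy in the first inequality: matching the weighted mean $\langle\xi\rangle_\mu$ appearing in Proposition~\ref{prop:nearly_logSob} with the Lebesgue mean required by the classical Poincaré--Sobolev inequalities. This is precisely where~\cite[Lemma 5.2]{CCHHK:20} and the control of $|\bar\xi-\langle\xi\rangle_\mu|$ enter; everything else is bookkeeping of constants and a direct appeal to Sobolev embeddings.
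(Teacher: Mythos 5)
Your proposal is correct and follows essentially the same route as the paper, whose own ``proof'' is a one-line remark that the corollary follows from Proposition~\ref{prop:nearly_logSob}, Poincar\'e--Sobolev inequalities, \cite[Lemma 5.2]{CCHHK:20}, and the bound $\Phi_1(1+s)\leq s\log(1+s)$ --- precisely the ingredients you assemble. Your handling of the discrepancy between the $\mu$-weighted mean and the Lebesgue mean (via \cite[Lemma 5.2]{CCHHK:20} and the Cauchy--Schwarz estimate on $|\bar\xi-\langle\xi\rangle_\mu|$) correctly fills in the detail the paper leaves implicit.
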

}

\subsection{Discrete logarithmic Sobolev inequalities}
{

Similarly to what was done in \cite{CCHHK:20}, one can derive discrete logarithmic Sobolev inequalities adapted to the hybrid setting.
}
\begin{prop}[Discrete logarithmic Sobolev inequality{, Neumann case}] \label{prop:logsob}
  Let $\D$ be a given discretisation of $\Omega$, with regularity parameter $\theta_\D$. 
  Let $\v_\D, \v^\infty_\D\in\V_\D$ be two positive vectors of unknowns such that
  $$\int_{\Omega}v_\M = \int_{\Omega} v_\M ^\infty  \ifed {M},$$
  and set {$v^\infty_{\M,\sharp} \defi\underset{K \in \M}{\sup} v_K^\infty$}.
  Define $\rac_\D$ as the element of $\V_\D$ such that
  $$\xi_K \defi \sqrt{\frac{v_K}{v^\infty_K}}\quad\forall K\in\M,\qquad\xi_\s \defi \sqrt{\frac{v_\s}{v^\infty_\s}}\quad\forall\s\in\E.$$
  Then, there exists $C_{LS,\infty}>0$, only depending on $M$, $v^\infty_{\M,\sharp}$, $\Omega$, $d$, and $\theta_\D$ such that
  \begin{equation} \label{LogSob}
    \int_{\Omega} v_\M^\infty\Phi_1 \left ( \xi^2_\M \right )
    \leq C_{LS,\infty}^2  \,\big| \rac_\D \big|_{1,\D}^2\,.
  \end{equation}
\end{prop}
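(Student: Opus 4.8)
The plan is to transfer the continuous \emph{nearly} logarithmic Sobolev inequality of Proposition~\ref{prop:nearly_logSob} to the piecewise constant function $\xi_\M$, the only genuinely discrete ingredient being a discrete Sobolev--Wirtinger estimate controlling $L^q$ fluctuations of $\xi_\M$ by the seminorm $|\rac_\D|_{1,\D}$. First I would introduce the probability measure $\dd\mu = v_\M^\infty\,\frac{\dd x}{M}$ on $\Omega$ (it is a probability measure since $\int_\Omega v_\M^\infty = M$), whose density satisfies $v_\M^\infty/M \le \mu_\sharp \defi v^\infty_{\M,\sharp}/M$ a.e. The mass constraint $\int_\Omega v_\M = M$ then yields $\|\xi_\M\|_{L^2_\mu(\Omega)}^2 = \frac{1}{M}\int_\Omega v_\M = 1$, and since $\Phi_1(s) = s\log s - s + 1$, the normalisation $\int_\Omega \xi_\M^2\,\dd\mu = 1 = \int_\Omega \dd\mu$ makes the affine part of $\Phi_1$ disappear:
\[
  \int_\Omega v_\M^\infty\,\Phi_1(\xi_\M^2) = M\int_\Omega \Phi_1(\xi_\M^2)\,\dd\mu = M\int_\Omega \xi_\M^2\log\!\left(\frac{\xi_\M^2}{\|\xi_\M\|_{L^2_\mu(\Omega)}^2}\right)\dd\mu.
\]
This brings the left-hand side exactly into the form handled by~\eqref{eq:nearly_logSob1}.

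Next I would apply~\eqref{eq:nearly_logSob1} with the choice $q = 4$, which is admissible for $d\in\{2,3\}$ and, crucially, annihilates the coefficient $\tfrac{q-4}{q-2}$ of the $L^2_\mu$ term. This leaves
\[
  \int_\Omega v_\M^\infty\,\Phi_1(\xi_\M^2) \le 2M\,\big\|\xi_\M - \langle\xi_\M\rangle_\mu\big\|_{L^4_\mu(\Omega)}^2,\qquad \langle\xi_\M\rangle_\mu \defi \int_\Omega \xi_\M\,\dd\mu.
\]
It then remains to replace the $\mu$-weighted mean by the arithmetic mean and to invoke a discrete Sobolev inequality. For the first point, I would use that, for any constant $c$ and any probability measure, $\|\xi_\M - \langle\xi_\M\rangle_\mu\|_{L^4_\mu} \le 2\|\xi_\M - c\|_{L^4_\mu}$ (triangle inequality together with Jensen's inequality applied to the constant $c - \langle\xi_\M\rangle_\mu$), and then bound $\|\cdot\|_{L^4_\mu} \le \mu_\sharp^{1/4}\|\cdot\|_{L^4(\Omega)}$ using only the upper bound on the density. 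Choosing $c = \overline{\xi_\M} \defi \tfrac{1}{|\Omega|}\int_\Omega \xi_\M$, the vector $\rac_\D - \overline{\xi_\M}\one_\D$ lies in $\V_{\D,0}^N$, so a discrete Sobolev--Wirtinger embedding $\|\xi_\M - \overline{\xi_\M}\|_{L^4(\Omega)} \le C_S\,|\rac_\D|_{1,\D}$ (valid for $q=4$ when $d\in\{2,3\}$, in the spirit of the discrete Poincar\'e inequalities of Proposition~\ref{prop:poinca}, cf.~\cite{DEGGH:18}) applies, the seminorm being invariant under addition of constants. Combining these bounds yields the claim with $C_{LS,\infty}^2 = 8\sqrt{M\,v^\infty_{\M,\sharp}}\,C_S^2$, which depends only on $M$, $v^\infty_{\M,\sharp}$, $\Omega$, $d$, and $\theta_\D$.

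The main obstacle is the discrete Sobolev--Wirtinger inequality in the hybrid setting: unlike the Poincar\'e estimates explicitly recalled in Proposition~\ref{prop:poinca}, an $L^q$ embedding with $q>2$ is needed, and one must check it holds uniformly in $\D$ through $\theta_\D$ for $q=4$ in both two and three dimensions. Conceptually, this is precisely why the \emph{nearly} logarithmic Sobolev inequality of Proposition~\ref{prop:nearly_logSob}, phrased with $L^q$ fluctuation norms rather than a Dirichlet energy, is the right tool here: applying the continuous logarithmic Sobolev inequality of Corollary~\ref{cor:logsob} to a reconstruction of $\xi_\M$ would force a dependence of the constant on the lower bound $\min_K v_K^\infty$ (through $\mu_\flat$ and the gradient of the weight), whereas the route above uses only $\mu_\sharp$ and hence produces a constant free of $\min_K v_K^\infty$, as required by the statement. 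A minor technical point to handle carefully is the admissibility of $q=4$ and the fact that $\xi_\M$, being piecewise constant and bounded on a bounded domain, indeed belongs to every $L^q_\mu(\Omega)$.
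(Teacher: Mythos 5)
Your proposal is correct and follows essentially the same route as the paper's proof: apply the nearly logarithmic Sobolev inequality \eqref{eq:nearly_logSob1} to the probability measure $v_\M^\infty\,\frac{\dd x}{M}$, use the equal-mass condition to reduce $\int_\Omega v_\M^\infty\Phi_1(\xi_\M^2)$ to $\int_\Omega v_\M\log(\xi_\M^2)$, swap the weighted mean for the arithmetic mean (the content of \cite[Lemma 5.2]{CCHHK:20}, which you reprove inline), and conclude with the discrete Poincar\'e--Sobolev embedding of \cite[Lemma B.25]{DEGGH:18} applied to $\rac_\D-\overline{\xi_\M}\,\one_\D\in\V_{\D,0}^N$. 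Your choice $q=4$ (admissible since $4<\frac{2d}{d-2}$ for $d\in\{2,3\}$) is a minor streamlining that kills the $\frac{q-4}{q-2}$ term and spares the separate use of \eqref{poincawir}, but the argument is otherwise the paper's.
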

\begin{proof}
{
  By \eqref{eq:nearly_logSob1} and \cite[Lemma 5.2]{CCHHK:20} }applied to the probability measure $\mu(x)\,{\rm d}x=v_\M^\infty(x)\,\frac{{\rm d}x}{M}$ and to the function $\xi_\M=\sqrt{\frac{v_\M}{v_\M^\infty}}$, we first infer that, for $q>2$,
  $$\int_{\Omega}v_\M\log\left(\xi^2_\M\right)\leq C(M,v^\infty_{\M,\sharp},q)\left(\|\xi_\M-\overline{\xi_\M}\|^2_{L^q(\Omega)}+\|\xi_\M-\overline{\xi_\M}\|^2_{L^2(\Omega)}\right),$$
  where we let $\overline{\xi_\M}\defi\frac{1}{|\Omega|}\int_{\Omega}\xi_\M\,{\rm d}x$. The conclusion then falls in two steps. On the one hand, since $\int_{\Omega}v_\M = \int_{\Omega} v_\M ^\infty$, we remark that
  $$\int_{\Omega} v_\M^\infty\Phi_1 \left ( \xi^2_\M \right )=\int_{\Omega}v_\M\log\left(\xi^2_\M\right).$$
  On the other hand, we invoke~\eqref{poincawir} and the discrete Poincar\'e--Sobolev inequality of~\cite[Lemma B.25, $p=2$]{DEGGH:18} for $2<q<\frac{2d}{d-2}$:
  $$\forall \w_\D \in \V_{\D,0}^N, \qquad \| w_\M \|_{L^q(\Omega)} \leq C_{PS} |\w_\D|_{1,\D},$$
  where $C_{PS}>0$ only depends on $\Omega$, $d$, and $\theta_\D$, that we apply to $\w_\D=\underline{\xi}_\D-\overline{\xi_\M}\,\one_\D\in\V_{\D,0}^N$.
  This proves~\eqref{LogSob}.
  
\end{proof}
{
Starting from \eqref{eq:nearly_logSob2}, a similar proof yields the following result. The relevant discrete Poincaré--Sobolev inequality in this case is given in \cite[Lemma B.32, $p=2$]{DEGGH:18}.
\begin{prop}[Discrete logarithmic Sobolev inequality, Dirichlet case] \label{prop:logsob_dir_disc}
  Assume that $|\Gamma^D|>0$.
 Let $\D$ be a given discretisation of $\Omega$, with regularity parameter $\theta_\D$. 
  Let $\v_\D, \v^\infty_\D\in\V_\D$ be two positive vectors of unknowns such that
  $$\v_\D- \v^\infty_\D  \in \V_{\D,0}^D,$$
  and set {$v^\infty_{\M,\sharp} \defi\underset{K \in \M}{\sup} v_K^\infty$} and $M^\infty:=\int_\Omega v_\M ^\infty$.
  Define $\rac_\D$ as the element of $\V_\D$ such that
  $$\xi_K \defi \sqrt{\frac{v_K}{v^\infty_K}}\quad\forall K\in\M,\qquad\xi_\s \defi \sqrt{\frac{v_\s}{v^\infty_\s}}\quad\forall\s\in\E.$$
  Then, letting $\mu \defi \frac{v^\infty_{\M}}{M^\infty}$, there exists $C_{LS,\Gamma^D,\infty}>0$, only depending on $M^\infty$, $v^\infty_{\M,\sharp}$, $\Omega$, $d$, $\Gamma^D$, and $\theta_\D$ such that
\begin{equation}\label{eq:LogSobDirDisc}
	\int_\Omega v_\M^\infty\Phi_1(\xi^2_\M)  \leq   
		C_{LS,\Gamma^D,\infty}^2\left(1+\log\left (1 + \| \xi_\M-1 \|_{L^2_\mu(\Omega)}^2\right)\right)			
	 	| \rac_\D |^2_{1,\D}.
\end{equation}
\end{prop}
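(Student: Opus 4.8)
The plan is to mirror the proof of Proposition~\ref{prop:logsob}, simply replacing the use of the near-logarithmic Sobolev inequality \eqref{eq:nearly_logSob1} by its companion \eqref{eq:nearly_logSob2}, which is precisely the form adapted to the Dirichlet situation, where no mass constraint $\int_\Omega v_\M=\int_\Omega v_\M^\infty$ is available. First I would observe that the hypothesis $\v_\D-\v^\infty_\D\in\V_{\D,0}^D$ forces $v_\s=v_\s^\infty$, hence $\xi_\s=1$, for every $\s\in\E_{ext}^D$; consequently $\rac_\D-\one_\D\in\V_{\D,0}^D$, and $|\rac_\D-\one_\D|_{1,\D}=|\rac_\D|_{1,\D}$ since subtracting a constant does not affect the differences $v_K-v_\s$. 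Since $\mu=v_\M^\infty/M^\infty$ is a probability measure with density bounded above by $v^\infty_{\M,\sharp}/M^\infty$, and since $\int_\Omega v_\M^\infty\Phi_1(\xi_\M^2)=M^\infty\int_\Omega\Phi_1(\xi_\M^2)\,\dd\mu$, I would apply \eqref{eq:nearly_logSob2} to the piecewise constant function $\xi_\M\in L^q_\mu(\Omega)$ (for any admissible $q>2$), producing a right-hand side made of three terms.

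Then I would bound each of the three terms by $|\rac_\D|_{1,\D}^2$, up to a logarithmic factor for the last one. The weight being bounded, the $L^q_\mu$ and $L^2_\mu$ norms are controlled by their unweighted counterparts, so the first term is handled by the discrete Poincaré--Sobolev inequality of \cite[Lemma B.32, $p=2$]{DEGGH:18} (valid for $2<q<\tfrac{2d}{d-2}$), applied to $\rac_\D-\one_\D\in\V_{\D,0}^D$, while the second is handled by the discrete Poincaré inequality \eqref{poincadir}. For the third term $\Phi_1\big(1+\|\xi_\M-1\|_{L^2_\mu(\Omega)}^2\big)$, I would invoke the elementary bound $\Phi_1(1+s)\leq s\log(1+s)$, then bound the linear factor $s=\|\xi_\M-1\|_{L^2_\mu(\Omega)}^2$ by $C|\rac_\D|_{1,\D}^2$ via \eqref{poincadir}, leaving the slowly growing factor $\log\big(1+\|\xi_\M-1\|_{L^2_\mu(\Omega)}^2\big)$ untouched.

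Collecting the estimates, the first two terms contribute a clean multiple of $|\rac_\D|_{1,\D}^2$ (this accounts for the ``$1+$'' in the prefactor), whereas the third contributes a multiple of $\log\big(1+\|\xi_\M-1\|_{L^2_\mu(\Omega)}^2\big)\,|\rac_\D|_{1,\D}^2$. Multiplying through by $M^\infty$ then yields \eqref{eq:LogSobDirDisc} with a constant $C_{LS,\Gamma^D,\infty}$ depending only on $M^\infty$, $v^\infty_{\M,\sharp}$, $\Omega$, $d$, $\Gamma^D$, and $\theta_\D$, since the relevant Poincaré and Poincaré--Sobolev constants depend on the discretisation only through $\theta_\D$.

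The main obstacle is conceptual rather than computational: the logarithmic prefactor on the right-hand side is genuinely unavoidable and must be tracked carefully through the third term, because the entropy $\int_\Omega v_\M^\infty\Phi_1(\xi_\M^2)$ is not globally comparable to a Dirichlet energy once $\|\xi_\M-1\|_{L^2_\mu(\Omega)}$ becomes large. The delicate point is thus to route the \emph{linear} part of the third term into $|\rac_\D|_{1,\D}^2$ while retaining the logarithm as the explicit prefactor, exactly as the continuous inequality \eqref{eq:nearly_logSob2} dictates; everything else reduces to the weighted-to-unweighted comparison of norms and to the two discrete functional inequalities already at our disposal.
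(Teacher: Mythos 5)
Your proposal is correct and follows exactly the route the paper intends: it starts from \eqref{eq:nearly_logSob2} applied to $\xi_\M$ with the probability measure $\mu=v_\M^\infty/M^\infty$, uses $\rac_\D-\one_\D\in\V_{\D,0}^D$ to invoke the discrete Poincar\'e--Sobolev inequality of \cite[Lemma B.32, $p=2$]{DEGGH:18} together with \eqref{poincadir}, and handles the last term via $\Phi_1(1+s)\leq s\log(1+s)$, which is precisely the combination the paper indicates when it says the proof is ``similar'' to that of Proposition~\ref{prop:logsob}. No gaps; the tracking of the logarithmic prefactor and of the constant's dependencies is as in the paper.
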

}

{
\section{Nonlinear scheme for mixed Dirichlet-Neumann boundary conditions} \label{Ap:NLmixted}

In this appendix, we introduce and analyse a version of the nonlinear scheme for the evolution problem~\eqref{evol:mixed:ino} when $|\Gamma^D|>0$. In order to perform the asymptotic analysis, we need to assume that the data are compatible with the thermal equilibrium: 
\[
	f= 0, \quad g^N = 0, \quad\text{ and there exists } \rho^D > 0 \text{ such that } g^D = \rho^D \e^{-\phi} = \rho^D \omega.
\]
For such data, given $u^{in} \geq 0$, the solution $u$ to \eqref{evol:mixed:ino} is positive for $t>0$, and converges towards $u^\infty = \rho^D \e^{-\phi}$ when $t \to \infty$.

\subsection{Scheme and well-posedness}

Accordingly to this setting, we define $\u_\D^\infty = \rho^D \underline{\omega}_\D$. One has
$u^\infty_\flat\underline{1}_\D \leq \u^\infty_\D \leq u^\infty_\sharp\underline{1}_\D$, where 
$0< u^\infty_\flat\leq u^\infty_\sharp$ only depend on $\rho^D$, $\phi$, and $\Omega$.
Remind that, as in \eqref{sch3:w}, given a positive $\u_\D \in \V_\D$, one defines $\w_\D(\u_\D) \in \V_\D$ as
\[
  w_K\defi\log\left(\frac{u_K}{u_K^\infty}\right)\quad\forall K\in\M,\qquad w_\s\defi\log\left(\frac{u_\s}{u_\s^\infty}\right)\quad\forall\s\in\E.
\]
For mixed boundary conditions, the discrete problem reads: Find $\big(\u_\D^n\in\V_\D\big)_{n\geq 1}$ positive such that
\begin{subequations}\label{sch:nonlin:mix}
        \begin{empheq}[left = \empheqlbrace]{align}
	            \frac{1}{\Delta t } 
    	       (u^{n}_\M - u^{n-1}_\M, v_\M)_{\Omega} 
	            +  T_\D \left(\u_\D^{n}, \w_\D(\u_\D^{n}), \v_\D\right)
	             &= 0\qquad\qquad\qquad\;\forall\v_{\D}\in\V_{\D,0}^{D},\label{sch1:mix}  \\
            \w_\D(\u_\D^n)& \in \V_{\D,0}^{D},  \label{sch2:mix} \\
            u^0_K &= \frac{1}{|K|}\int_K u^{in } \qquad\forall K\in\M. \label{sch3:mix} 
        \end{empheq}
\end{subequations}
Notice that, since for all $\s \in \E$, $w_\s^n = \log \left ( \dfrac{u^n_\s}{u^\infty_\s} \right )$, the equation~\eqref{sch2:mix} only means that, for all $\s \in  \E^D_{ext}$, $u^n_\s = u^\infty_\s$, which enforces strongly the Dirichlet boundary condition on $\Gamma^D$. One can show the following existence result.

\begin{theorem}[Existence of positive solutions and entropy dissipation] \label{th:existencenonlin:mix}
Let $u^{in} \in L^2(\Omega)$ be a non-negative function.
There exists at least one positive solution $\big(\u_\D^n\in\V_\D)_{n \geq 1}$ to the nonlinear scheme~\eqref{sch:nonlin:mix}. It satisfies the following entropy/dissipation relation:
\begin{equation} \label{eq:Diss:mix}
	\forall n \in \mathbb{N}, \qquad 
		\frac{\N^{n+1} - \N ^n }{\Delta t} + \Diss^{n+1} \leq 0,
\end{equation}
where $\N ^n$ and $\Diss^n$ are, respectively, the discrete relative entropy and dissipation defined in~\eqref{entro1}.
Moreover, there exists $\varepsilon >0$, depending on $\Lambda$, $\phi$, $u^{in}$, $\rho^D$, $\Omega$, $d$, $\Delta t$, and $\D$ such that, for all $n\geq 1$, $u_K^n\geq\varepsilon$ for all $K\in\M$ and $u_\s^n\geq\varepsilon$ for all $\s\in\E$.
\end{theorem}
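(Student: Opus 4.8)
The plan is to follow the two-step strategy used to prove Theorem~\ref{th:existencenonlin} in the pure Neumann case, working throughout in the constrained space $\V_{\D,0}^{D}$ so that the homogeneous Dirichlet condition~\eqref{sch2:mix} (i.e.~$w_\s=0$ for all $\s\in\E_{ext}^D$) is built directly into the ansatz. Reasoning in the variable $\w_\D\in\V_{\D,0}^{D}$, related to $\u_\D$ through $\u_\D=\u_\D^\infty\times{\rm exp}(\w_\D)$, I would seek, for $n\geq 1$ and $u_\M^{n-1}\geq 0$ given (with $u_\M^{n-1}>0$ if $n>1$), a zero of the restriction to $\V_{\D,0}^{D}$ of the regularised field $\underline{{\cal P}}^{n,\mu}_\D$ of~\eqref{def:P_esp}, for $\mu>0$. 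The only structural change with respect to the Neumann setting is that test functions now live in $\V_{\D,0}^{D}$ rather than in the whole space $\V_\D$; since this constraint only touches the face unknowns carried by $\E_{ext}^D$, while the mass-type term $\sum_{K\in\M}\frac{|K|}{\Delta t}\big(u_K^\infty\e^{w_K}-u_K^{n-1}\big)w_K$ involves cell unknowns only, the coercivity computation~\eqref{Pmu}--\eqref{Nmu} goes through verbatim: by convexity of $\Phi_1$ and non-negativity of the dissipation one gets $\langle\underline{{\cal P}}^{n,\mu}_\D(\w_\D),\w_\D\rangle\geq\mu\|\w_\D\|^2-\N(w_\M^{n-1})/\Delta t$, and Lemma~\ref{Lemme:exist} (Brouwer) yields a regularised solution $\w_\D^{n,\mu}\in\V_{\D,0}^{D}$. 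The case $n=1$, where $u_K^0$ may vanish, is handled exactly as before using $\Phi_1(0)=1$ and $\Phi_1(\e^s)-1\leq s\e^s$.

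The main obstacle, and the only place where a genuinely new argument is required, is the uniform-in-$\mu$ a priori bound permitting passage to the limit $\mu\to 0$, since in the mixed case there is no mass conservation and Lemma~\ref{lemma:positivity} cannot be invoked as stated. I would establish a Dirichlet-adapted version of that lemma, in which the mass control is replaced by the discrete Poincaré inequality~\eqref{poincadir} on $\V_{\D,0}^{D}$ (available since $|\Gamma^D|>0$). Testing $\underline{{\cal P}}^{n,\mu}_\D(\w_\D^{n,\mu})=\underline{0}_\D$ with $\w_\D^{n,\mu}$ first gives $\Diss(\w_\D^{n,\mu})\leq C_\sharp\defi\N(w_\M^{n-1})/\Delta t$, uniformly in $\mu$. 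Upper bounds then follow from the root-variable mechanism of Proposition~\ref{prop:Fisher}: bounded dissipation controls $|\rac_\D|_{1,\D}$, and since $w_\s=0$ on $\E_{ext}^D$ forces $\rac_\D-\one_\D\in\V_{\D,0}^{D}$, inequality~\eqref{poincadir} together with the equivalence of norms on the fixed finite-dimensional space $\V_{\D,0}^{D}$ bounds $\rac_\D$, hence $\w_\D$ from above. The delicate point is the lower bound (preventing $u_K\to 0$, i.e.~$w_K\to-\infty$): here the lower bound in~\eqref{hyp:g:bounds} on $m$ (the logarithmic mean) is essential, as it makes a vanishing unknown adjacent to a non-vanishing one cost an unbounded amount of dissipation $\Diss(\w_\D)=\sum_{K}r_K(\u_K)\,a_K^\Lambda(\w_K,\w_K)$; starting from the Dirichlet faces, where $u_\s=u_\s^\infty\geq u_\flat^\infty>0$, and propagating along the connected mesh, one concludes that no unknown can reach $-\infty$ without violating the bound $\Diss(\w_\D^{n,\mu})\leq C_\sharp$. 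This reproduces, with the Dirichlet anchor in place of the mass anchor, the mechanism behind Lemma~\ref{lemma:positivity}.

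With such bounds in hand, the family $(\w_\D^{n,\mu})_{\mu>0}$ is bounded uniformly in $\mu$, so by compactness a subsequence converges to some $\w_\D^n\in\V_{\D,0}^{D}$ with $\underline{{\cal P}}^{n,0}_\D(\w_\D^n)=\underline{0}_\D$; setting $\u_\D^n\defi\u_\D^\infty\times{\rm exp}(\w_\D^n)$ produces a positive solution of~\eqref{sch:nonlin:mix} by induction on $n$. The entropy/dissipation relation~\eqref{eq:Diss:mix} is obtained exactly as in Proposition~\ref{prop:dissip}: by convexity of $\Phi_1$ one bounds $\N^{n+1}-\N^n$ by $(u_\M^{n+1}-u_\M^n,w_\M^{n+1})_\Omega$, and since~\eqref{sch2:mix} guarantees $\w_\D^{n+1}\in\V_{\D,0}^{D}$, this quantity may be evaluated by testing~\eqref{sch1:mix} with $\v_\D=\w_\D^{n+1}$, giving $-\Delta t\,\Diss^{n+1}$. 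Finally, the uniform positivity follows as in the conclusion of Theorem~\ref{th:existencenonlin}: the entropy decay~\eqref{eq:Diss:mix} yields $\Diss(\w_\D^n)\leq\N^0/\Delta t$ for every $n$, a bound independent of $n$; since the Dirichlet anchor $w_\s=0$ on $\E_{ext}^D$ is likewise the same at every time step, the Dirichlet-adapted a priori lemma delivers an $n$-independent constant $C$ with $-C\one_\D\leq\w_\D^n\leq C\one_\D$, whence $\u_\D^n\geq\varepsilon\one_\D$ with $\varepsilon\defi u_\flat^\infty\e^{-C}>0$.
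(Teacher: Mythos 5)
Your proposal is correct and follows essentially the same route as the paper, which itself only sketches this proof by reference to the pure Neumann case: the fixed-point/regularisation argument of Theorem~\ref{th:existencenonlin} carried out in $\V_{\D,0}^{D}$, with the entropy/dissipation relation obtained as in Proposition~\ref{prop:dissip}. In particular, you correctly identify the one genuinely new ingredient the paper points to — the Dirichlet-adapted counterpart of Lemma~\ref{lemma:positivity}, in which the mass anchor is replaced by the prescribed values $w_\s=0$ on the Dirichlet faces, from which the bound propagates through the connected mesh via the controlled quantities $(\e^{w_K}-\e^{w_\s})(w_K-w_\s)$.
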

The proof of this theorem relies on the same arguments as the one of Theorem~\ref{th:existencenonlin} for (homogeneous) pure Neumann boundary conditions. The major difference lies in the counterpart of Lemma~\ref{lemma:positivity}, which is no longer based on the positivity of the mass, but on the prescribed (zero) value on the Dirichlet faces.

\subsection{Long-time behaviour}

In the next theorem, we state the long-time behaviour of the discrete solutions to the nonlinear scheme \eqref{sch:nonlin:mix}. 
\begin{theorem}[Asymptotic stability] \label{Th:nonlin:mix}
  If $\big(\u^n_\D\in\V_\D\big) _{n \geq 1}$ is a (positive) solution to~\eqref{sch:nonlin:mix}, then the discrete entropy decays exponentially fast in time: there is $\displaystyle \nu_{\mathrm{nl},\Gamma^D}>0$, depending on $\Lambda$, $\phi$, $\Gamma^D$, $\rho^D$, $u^{in}$, $\Omega$, $d$, and $\theta_\D$ such that 
  \begin{equation} \label{nonlin:decentropie:mix}
    \forall n \in \mathbb{N}, \qquad \N^{n+1} \leq (1 + \nu_{\mathrm{nl},\Gamma^D} \,\Delta t ) ^{-1} \N^n. 
  \end{equation} 
 Consequently, the discrete solution converges exponentially fast in time towards its associated discrete steady-state.
\end{theorem}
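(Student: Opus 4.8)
The plan is to mirror the proof of Theorem~\ref{Th:nonlin}, the only---but crucial---difference being the replacement of the pure Neumann discrete logarithmic Sobolev inequality by its Dirichlet counterpart from Proposition~\ref{prop:logsob_dir_disc}. The starting point is the entropy/dissipation relation~\eqref{eq:Diss:mix} provided by Theorem~\ref{th:existencenonlin:mix}. As in the pure Neumann case, the whole matter reduces to establishing a discrete entropy--entropy dissipation inequality of the form $\Diss^{n+1}\geq\nu_{\mathrm{nl},\Gamma^D}\,\N^{n+1}$, from which~\eqref{nonlin:decentropie:mix} follows at once by combination with~\eqref{eq:Diss:mix}.

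First I would bound the dissipation from below exactly as in the proof of Theorem~\ref{Th:nonlin}. The Fisher information comparison of Proposition~\ref{prop:Fisher}, $\hat{\Diss}^{n+1}\leq C_F\Diss^{n+1}$, carries over verbatim to the mixed setting: its proof is purely local (cell by cell) and only uses the bounds $u_\flat^\infty\one_\D\leq\u_\D^\infty\leq u_\sharp^\infty\one_\D$---which still hold here, now with $u_\flat^\infty,u_\sharp^\infty$ depending on $\rho^D$, $\phi$, and $\Omega$---together with the properties~\eqref{hyp:g}--\eqref{def:f} of the reconstruction and Lemma~\ref{lemma:cond}. Combining this with the root-form definition~\eqref{hatD} of $\hat{\Diss}^{n+1}$, the lower bound $u^\infty_{K,\flat}\geq u_\flat^\infty$, and the global coercivity estimate~\eqref{def:globalcoercivity}, I get
\[
	\Diss^{n+1}\geq\frac{1}{C_F}\hat{\Diss}^{n+1}\geq\frac{4 u_\flat^\infty}{C_F}\,a_\D^\Lambda\big(\rac_\D^{n+1},\rac_\D^{n+1}\big)\geq\frac{4 u_\flat^\infty\lambda_\flat\alpha_\flat}{C_F}\,\big|\rac_\D^{n+1}\big|_{1,\D}^2,
\]
where $\rac_\D^{n+1}$ is defined by~\eqref{xi}.

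The remaining---and only delicate---step is to control $\N^{n+1}$ by $\big|\rac_\D^{n+1}\big|_{1,\D}^2$. Here I would apply Proposition~\ref{prop:logsob_dir_disc} to the couple $(\u_\D^{n+1},\u_\D^\infty)$, which is admissible because constraint~\eqref{sch2:mix} precisely enforces $\u_\D^{n+1}-\u_\D^\infty\in\V_{\D,0}^D$. This yields, with $\mu=u_\M^\infty/M^\infty$ and $M^\infty=\int_\Omega u_\M^\infty$,
\[
	\N^{n+1}\leq C_{LS,\Gamma^D,\infty}^2\left(1+\log\big(1+\|\xi^{n+1}_\M-1\|_{L^2_\mu(\Omega)}^2\big)\right)\big|\rac_\D^{n+1}\big|_{1,\D}^2.
\]
The main obstacle is that the logarithmic prefactor is \emph{not} a constant, so as such the inequality does not deliver a clean EED estimate. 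The key observation to tame it is that, by~\eqref{eq:Diss:mix}, the entropy is non-increasing, hence $\N^{n+1}\leq\N^0$ for all $n$. Using the elementary pointwise inequality $(\sqrt{s}-1)^2\leq\Phi_1(s)$ valid for all $s\geq 0$ (which follows from a short monotonicity study of $s\mapsto\Phi_1(s)-(\sqrt{s}-1)^2$, vanishing with vanishing derivative at $s=1$), I would bound
\[
	\|\xi^{n+1}_\M-1\|_{L^2_\mu(\Omega)}^2=\frac{1}{M^\infty}\int_\Omega u_\M^\infty\big(\xi^{n+1}_\M-1\big)^2\leq\frac{1}{M^\infty}\int_\Omega u_\M^\infty\Phi_1\big((\xi^{n+1}_\M)^2\big)=\frac{\N^{n+1}}{M^\infty}\leq\frac{\N^0}{M^\infty},
\]
so that the prefactor is uniformly bounded in $n$ by $1+\log(1+\N^0/M^\infty)$.

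Putting the two bounds together gives the EED inequality $\Diss^{n+1}\geq\nu_{\mathrm{nl},\Gamma^D}\,\N^{n+1}$ with
\[
	\nu_{\mathrm{nl},\Gamma^D}\defi\frac{4 u_\flat^\infty\lambda_\flat\alpha_\flat}{C_F\,C_{LS,\Gamma^D,\infty}^2\big(1+\log(1+\N^0/M^\infty)\big)}>0,
\]
and inserting this into~\eqref{eq:Diss:mix} yields~\eqref{nonlin:decentropie:mix}. The appearance of $u^{in}$ in the dependencies of $\nu_{\mathrm{nl},\Gamma^D}$ announced in the statement is natural and expected: it enters precisely through $\N^0$ when taming the logarithmic factor (contrast with the pure Neumann case of Theorem~\ref{Th:nonlin}, whose log-Sobolev constant carries no such factor, whence no $u^{in}$-dependence there). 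For the final assertion, unlike in the pure Neumann case, mass is no longer conserved---there is a flux through $\Gamma^D$---so the Csisz\'ar--Kullback argument used for~\eqref{decL1} does not apply verbatim; I would instead invoke a generalised Csisz\'ar--Kullback--type inequality, valid without the mass constraint, to convert the exponential decay of $\N^n$ into exponential convergence of $u_\M^n$ towards $u_\M^\infty$ in $L^1(\Omega)$.
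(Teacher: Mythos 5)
Your proposal is correct and follows essentially the same path as the paper's proof: lower-bound $\Diss^{n+1}$ via Proposition~\ref{prop:Fisher} and coercivity, apply the Dirichlet discrete log-Sobolev inequality of Proposition~\ref{prop:logsob_dir_disc}, and tame the logarithmic prefactor uniformly in $n$ using the monotone decay of the entropy from~\eqref{eq:Diss:mix}. The only (harmless) variation is in bounding $\|\xi^{n+1}_\M-1\|_{L^2_\mu(\Omega)}^2$ by the entropy: you use the pointwise inequality $(\sqrt{s}-1)^2\leq\Phi_1(s)$, whereas the paper uses $(\xi-1)^2\leq\xi^2+1$ together with the Fenchel--Young inequality $x\leq\Phi_1(x)+\Phi_1^\star(1)$; both yield a bound of the form $(M^\infty)^{-1}\N^0+C$, and your closing remark on the need for a generalised Csisz\'ar--Kullback argument (mass not being conserved through $\Gamma^D$) is a pertinent point the paper leaves implicit.
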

%

\begin{proof}
Let $n \in \mathbb{N}^{\star}$. As in Section~\ref{asnon}, one has $\displaystyle \Diss^{n}\geq\frac{1}{C_F}\hat{\Diss}^{n}\geq\frac{4  u_\flat^\infty \lambda_\flat \alpha_\flat}{C_F}\big| \rac_\D^{n}  \big| _{1,\D}^2$, where $C_F>0$ depends on the data.
Using the discrete log--Sobolev inequality \eqref{eq:LogSobDirDisc} from Proposition~\ref{prop:logsob_dir_disc}, we get 
\begin{multline}\label{eq:logopt}
  \N^n \leq C^2_{LS,\Gamma^D,\infty} 
		\left(1+\log \left (1 + \| \xi^n_\M-1 \|_{L^2_\mu(\Omega)}^2\right)\right) | \rac_\D ^n|^2_{1,\D} \\
		\leq  \frac{C_{LS,\Gamma^D,\infty}^2 C_F} {  4  u_\flat^\infty \lambda_\flat \alpha_\flat  }
		\left(1+\log \left (1 + \| \xi^n_\M-1 \|_{L^2_\mu(\Omega)}^2\right)\right) \Diss^n.
\end{multline}
Then, there is $C> 0$ such that (recall that $\rac^n_\D$ is positive)  
\[
 	\| \xi_\M^{n} - 1 \|^2_{L^2_{\mu}(\Omega)} 
 		\leq \| (\xi^n_\M)^2 \|_{{L^1_\mu(\Omega)}}+1
 		\leq \left \|  \Phi_1  \left( (\xi_\M^{n})^2 \right ) \right  \|_{{L^1_\mu(\Omega)}} + C
 		= (M^\infty)^{-1}\N^n + C\,,
\]
where the last inequality is an application of the Fenchel--Young inequality $x\leq \Phi_1(x) + \Phi_1^\star(1)$, where $\Phi_1^\star$ is the convex conjugate of $\Phi_1$ and $x = (\xi_\M^{n})^2$. But, since the entropy/dissipation relation \eqref{eq:Diss:mix} holds, the discrete entropy decays and $\N^n \leq \N^0$. Therefore, one has 
\[
\| \xi_\M^{n} - 1 \|^2_{L^2_{\mu}(\Omega)}  \leq (M^\infty)^{-1}\N^0 + C\,.
\]
Combining this estimate with \eqref{eq:logopt}, we deduce that there exists $\nu_{\mathrm{nl},\Gamma^D}>0$, depending on $\Lambda$, $\phi$, $\Gamma^D$, $\rho^D$, $u^{in}$, $\Omega$, $d$, and $\theta_\D$ such that $\displaystyle \N^n \leq \nu_{\mathrm{nl},\Gamma^D} \Diss^n$. Then, using the entropy/dissipation relation \eqref{eq:Diss:mix}, we get \eqref{nonlin:decentropie:mix}.
\end{proof}
}

\section{Proofs of technical results} \label{Appendix_technicalresults}

\subsection{Discrete boundedness by mass and dissipation} \label{ap:lemma}

We prove Lemma~\ref{lemma:positivity} from Section~\ref{ssse:exist}. To ease the reading, we first recall the result.
\lem*
\begin{proof}
  {For $K\in\M$, using \eqref{def:a_K} and \eqref{def:localcoercivity}, we first infer that
  $$\delta_K \w_K \cdot \A_K \delta_K \w_K = a_K^{\Lambda} (\w_K,\w_K) \geq \lambda_\flat \alpha_\flat  |\w_K|_{1,K}^2= \lambda_\flat \alpha_\flat  \sum_{\s \in \E_K} \frac{|\s|}{d_{K,\s}} \big(w_K-w_\s\big)^2 .$$
  By definition~\eqref{def:regmesh} of the regularity parameter $\theta_\D$, we have that $\frac{|\s|}{d_{K,\s}} \geq \frac{h_K^{d-2}}{\theta_\D}$ for all $\s\in\E_K$, so that
  \begin{equation} \label{lobo}
    \delta_K \w_K \cdot \A_K \delta_K \w_K \geq \frac{\lambda_\flat \alpha_\flat}{\theta_\D} h_K^{d-2} |\delta_K \w_K|^2.
  \end{equation}}
  By the expression~\eqref{ED} of $\Diss(\w_\D)$, and the local lower bound~\eqref{lobo}, we thus get
  \begin{align*}
    \Diss(\w_\D)  & = \sum_{K \in \M } r_K\big( \u_K^\infty \times{\rm exp}(\w_K) \big) \delta_K \w_K \cdot \A_K \delta_K \w_K \\
    &  \geq  \frac{\lambda_\flat\alpha_\flat}{\theta_\D}\sum_{K \in \M } h_K^{d-2}r_K\big( \u_K^\infty \times{\rm exp}(\w_K) \big) |\delta_K \w_K |^2 \\ 
    & = \frac{\lambda_\flat\alpha_\flat}{\theta_\D}\sum_{K \in \M }  \sum_{\s \in \E_K}h_K^{d-2} r_K\big( \u_K^\infty \times{\rm exp}(\w_K) \big) (w_K -  w_\s)^2.
  \end{align*}
  Let $K \in \M$ and $\s \in \E_K$ be fixed. Using, successively, the definition~\eqref{eq:rK} of $r_K$ combined with the definition~\eqref{def:f} of $f_{|\E_K|}$, the combination of~\eqref{bornesstationnaires} with assumptions~\eqref{hyp:g:monotony} and~\eqref{hyp:g:homogeneity}, and the assumptions~\eqref{hyp:g:cons+cons} and~\eqref{hyp:g:bounds} combined with the bound~\eqref{def:boundfaces} on $|\E_K|$, we infer, for $w_\s \neq w_K$, 
  \begin{align*}
     r_K\big( \u_K^\infty \times{\rm exp}(\w_K) \big)(w_K -  w_\s)^2 & \geq
    \frac{1}{|\E_{K}|}\,m  \big(u_K^\infty \e^{w_K}, u_\s^\infty \e^{w_\s} \big)  (w_K -  w_\s)^2 \\
    & \geq \frac{u^\infty_\flat}{|\E_{K}|}\, m  \big(\e^{w_K}, \e^{w_\s} \big) (w_K -  w_\s)^2 \\
    & \geq  \frac{u^\infty_\flat}{d\theta_\D^2}\left ( \e^{w_K}- \e^{w_\s} \right ) \left ( w_K - w_\s \right )\geq 0, 
  \end{align*}
  and we verify that this inequality still holds when $w_\s = w_K$.
  Since $\Diss(\w_\D)\leq C_\sharp$ by~\eqref{Entropycontrol}, for all $K\in\M$, and all $\s\in\E_K$, we have
  \begin{equation} \label{prox}
    0 \leq \left ( \e^{w_K}- \e^{w_\s} \right ) \left ( w_K - w_\s \right ) \leq \zeta h_K^{2-d},
  \end{equation}
  with $\zeta\defi\frac{dC_\sharp\theta_\D^3}{\lambda_\flat\alpha_\flat u^\infty_\flat}>0$ (recall that $\alpha_\flat$ depends on $\Omega$, $d$, and $\theta_\D$).
  Besides, since $\sum_{K \in \M} |K| u_K ^\infty \e^{w_K} \leq M_\sharp$ again by~\eqref{Entropycontrol}, we have $|K| u_K ^\infty \e^{w_K} \leq M_\sharp$ for all $K \in \M$.
  Similarly, since $\sum_{K \in \M} |K| u_K ^\infty \e^{w_K} \geq M_\flat$, there exists $K_0 \in \M$ such that $|\Omega|u_{K_0} ^\infty \e^{w_{K_0}} \geq M_\flat>0$.
  Combining these bounds, we infer that there exists $K_0\in\M$ such that
  \begin{equation} \label{eq:wK0}
    \log\left(\frac{M_\flat}{|\Omega|u_\sharp^\infty}\right)\leq w_{K_0}\leq \log\left(\frac{M_\sharp}{|K_0|u_\flat^\infty}\right).
  \end{equation}
  
  Now, let us show that we can similarly frame all the other components of $\w_\D$.
  
  For $a,x \in \R$, let us define $E(a,x)=\left ( \e^x - \e^a  \right ) (x - a)\geq 0$. Observe that
  $E(a,y+a)\e^{-a} = (\e^y-1)y =:\xi(y)$ and that $\xi$ is continuous, strictly decreasing for $y<0$, strictly increasing for $y>0$, $\xi(0)=0$, and $\xi(y)\to+\infty$ when $y\to\pm\infty$. Let $b, a_\sharp>0$, and take $|a|\leq a_\sharp$. By the properties of $\xi$, if $E(a,x)\leq b$, then $|x|\leq \kappa_b(a_\sharp) := a_\sharp + \max\{|y|\ \text{s.t.}~\xi(y)=b\e^{a_\sharp}\}$. We can thus infer that if $(x_k)_{k=0,\dots,m}$ is a finite sequence of real numbers such that $E(x_k, x_{k+1})\leq b$ and $|x_0|\leq a_\sharp$, then $|x_m| \leq \kappa_b^{(m)}(a_\sharp)$ where $\kappa_b^{(m)}$ is $m$ compositions of $\kappa_b$. In particular, the bound only depends on $a_\sharp$, $m$ and $b$. 
  
  Now we can conclude the proof. Because of the connectivity of the mesh, for any cell $K$ (respectively, face $\s$) there is a finite sequence of components of $\w_\D$, denoted $(x_k)_{k=0,\dots,m}$, starting at $x_0 = w_{K_0}$ and finishing at $x_m = w_K$ (respectively, $x_m = w_\s$) such that, by \eqref{prox}, $E(x_k, x_{k+1})\leq b:=\zeta h_\D^{2-d}$. The inequality \eqref{eq:wK0} yields the initial bound on $|x_0|$, and one concludes by the above argument.

\end{proof}

\subsection{A local comparison result} \label{ap:AK}

We prove a local comparison result between the matrices $\A_K$ and some (local) diagonal matrices.
The proof relies on arguments that are similar to those advocated in~\cite{CaGui:17} to analyse the VAG scheme.
{
\begin{lemma} \label{lemma:cond}
  For $K \in \M$, let $\A_K\in\R^{|\E_K|\times|\E_K|}$ be the matrix defined by~\eqref{def:flux:A}.
  The matrices $\A_K$ are symmetric positive-definite, and there exists $C_A>0$, only depending on $\Lambda$, $\Omega$, $d$, and $\theta_\D$ such that
  \begin{equation*} 
    \forall K \in \M, \quad \text{{\rm Cond}}_2(\A_K) \leq C_A,
  \end{equation*}
  where $\text{{\rm Cond}}_2(\A_K) \defi \|\A_K^{-1}\|_2\|\A_K\|_2$ is the condition number of the matrix $\A_K$.
  Moreover, letting for $K\in\M$, $\B_K \in \R^{|\E_K|\times|\E_K|}$ be the diagonal matrix with entries
  \begin{equation} \label{BK}
    B_K^{\s\s} \defi \sum_{\s' \in \E_K} |A_K^{\s\s'}|\qquad\text{for all }\s\in\E_K,
  \end{equation}
  there exists $C_B>0$, only depending on $\Lambda$, $\Omega$, $d$, and $\theta_\D$ such that
  \begin{equation} \label{CB}
    \forall K \in \M, \,\forall w  \in \R ^{|\E_K|}, \qquad w \cdot \A_K w \leq w \cdot \B_K w \leq C_B \,w \cdot \A_K w.
  \end{equation}
\end{lemma}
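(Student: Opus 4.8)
The plan is to read off all the spectral information about $\A_K$ from the identity~\eqref{def:a_K}, which states $a_K^{\Lambda}(\u_K,\v_K)=\delta_K\v_K\cdot\A_K\delta_K\u_K$, combined with the local coercivity/boundedness estimate~\eqref{def:localcoercivity}. Let $D_K$ denote the diagonal matrix with entries $\frac{|\s|}{d_{K,\s}}$, $\s\in\E_K$, so that $|\v_K|_{1,K}^2=\delta_K\v_K\cdot D_K\delta_K\v_K$. The key elementary remark is that the map $\v_K\mapsto\delta_K\v_K$ is onto $\R^{|\E_K|}$: given any $w\in\R^{|\E_K|}$, one recovers $\delta_K\v_K=w$ by taking $v_K=0$ and $v_\s=-w_\s$. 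Consequently,~\eqref{def:localcoercivity} rewrites, for every $w\in\R^{|\E_K|}$, as $\lambda_\flat\alpha_\flat\,w\cdot D_Kw\le w\cdot\A_Kw\le\lambda_\sharp\alpha_\sharp\,w\cdot D_Kw$. Since $D_K$ is positive-definite (all $|\s|,d_{K,\s}>0$), the left inequality shows that the symmetric matrix $\A_K$ is positive-definite, hence nonsingular, and yields $\lambda_{\min}(\A_K)\ge\lambda_\flat\alpha_\flat\min_{\s}\frac{|\s|}{d_{K,\s}}$ and $\lambda_{\max}(\A_K)\le\lambda_\sharp\alpha_\sharp\max_{\s}\frac{|\s|}{d_{K,\s}}$.

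To bound $\mathrm{Cond}_2(\A_K)=\lambda_{\max}(\A_K)/\lambda_{\min}(\A_K)$, I would bound the ratio $\max_{\s}\frac{|\s|}{d_{K,\s}}\big/\min_{\s}\frac{|\s|}{d_{K,\s}}$ by a constant depending only on $d$ and $\theta_\D$. Since $x_K\in\overline K$ and $\overline\s\subset\overline K$ force $d_{K,\s}\le h_K$, while~\eqref{def:regmesh} gives $d_{K,\s}\ge h_K/\theta_\D$; and since $|\s|\le C_d\,h_K^{d-1}$ (a $(d-1)$-dimensional set of diameter $\le h_K$, with $C_d$ dimensional) while $|\s|\ge h_K^{d-1}/\theta_\D$ by~\eqref{def:regmesh}, one gets $\frac{h_K^{d-2}}{\theta_\D}\le\frac{|\s|}{d_{K,\s}}\le C_d\theta_\D h_K^{d-2}$. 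The sought ratio is thus at most $C_d\theta_\D^2$, so that $\mathrm{Cond}_2(\A_K)\le\frac{\lambda_\sharp\alpha_\sharp}{\lambda_\flat\alpha_\flat}C_d\theta_\D^2=:C_A$, which depends only on $\Lambda$, $\Omega$, $d$, and $\theta_\D$ (recall $\alpha_\flat,\alpha_\sharp$ depend only on $\Omega$, $d$, $\theta_\D$).

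For the lower bound $w\cdot\A_Kw\le w\cdot\B_Kw$ in~\eqref{CB}, I would argue entrywise. Since $A_K^{\s\s}=e_\s\cdot\A_Ke_\s\ge 0$ by positive-definiteness, the diagonal contributions to $w\cdot\B_Kw-w\cdot\A_Kw$ cancel. Using the symmetry $A_K^{\s\s'}=A_K^{\s'\s}$, the remaining off-diagonal part rewrites as a sum over unordered pairs $\{\s,\s'\}$, $\s\neq\s'$, of the terms $|A_K^{\s\s'}|(w_\s^2+w_{\s'}^2)-2A_K^{\s\s'}w_\s w_{\s'}$. Each such term equals $A_K^{\s\s'}(w_\s-w_{\s'})^2\ge 0$ when $A_K^{\s\s'}\ge 0$, and $|A_K^{\s\s'}|(w_\s+w_{\s'})^2\ge 0$ when $A_K^{\s\s'}<0$; hence the whole difference is non-negative.

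For the upper bound $w\cdot\B_Kw\le C_B\,w\cdot\A_Kw$, I would combine the crude estimates $w\cdot\B_Kw\le\|\B_K\|_2\,|w|^2$ (as $\B_K$ is diagonal with non-negative entries) and $w\cdot\A_Kw\ge\lambda_{\min}(\A_K)\,|w|^2$. Since $|A_K^{\s\s'}|=|e_\s\cdot\A_Ke_{\s'}|\le\lambda_{\max}(\A_K)$, one has $B_K^{\s\s}=\sum_{\s'}|A_K^{\s\s'}|\le|\E_K|\,\lambda_{\max}(\A_K)$, whence $\|\B_K\|_2\le|\E_K|\,\lambda_{\max}(\A_K)$. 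Therefore $w\cdot\B_Kw\le|\E_K|\,\mathrm{Cond}_2(\A_K)\,w\cdot\A_Kw$, and invoking $|\E_K|\le d\theta_\D^2$ from~\eqref{def:boundfaces} with the first part gives the claim with $C_B\defi d\theta_\D^2\,C_A$. The only non-mechanical ingredients are the surjectivity remark, which upgrades the coercivity bound~\eqref{def:localcoercivity} into a genuine spectral comparison of $\A_K$ with $D_K$, and the sign bookkeeping for the lower bound; I expect this pair-by-pair sign argument to be the most delicate point to present cleanly, the remainder being mesh-regularity arithmetic.
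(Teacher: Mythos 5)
Your proof is correct and follows essentially the same route as the paper's: the surjectivity of $\v_K\mapsto\delta_K\v_K$ combined with~\eqref{def:localcoercivity} and the mesh-regularity bounds on $\frac{|\s|}{d_{K,\s}}$ to control the spectrum of $\A_K$, the half-sum/sign argument (equivalent to the paper's use of Young's inequality) for $w\cdot\A_Kw\le w\cdot\B_Kw$, and a condition-number estimate for the reverse inequality. Your derivation of $\|\B_K\|_2\le|\E_K|\,\lambda_{\max}(\A_K)$ is a slightly cleaner substitute for the paper's detour through the equivalence constants $\gamma_k$ between the $\ell^1$ and $\ell^2$ matrix norms, but the substance is identical.
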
}
\begin{proof}
  Let $K \in \M$ and $k \defi |\E_K|$. As a direct consequence of its definition~\eqref{def:flux:A}, the matrix $\A_K\in\R^{k\times k}$ is symmetric and positive semi-definite.
  Now, let $w \defi (w_\s)_{\s \in \E_K} \in \R^k$, and define $\v_K \in \V_K$ such that
  $$v_K =  0\qquad\text{and}\qquad v_\s = -w_\s \text{ for all } \s \in \E_K.$$
  Then, $\delta_K \v_K = (v_K-v_\s)_{\s\in\E_K}=w$. {By~\eqref{lobo}, we immediately get that}
  \begin{equation*}
    w \cdot \A_K w \geq \frac{\lambda_\flat \alpha_\flat}{\theta_\D} h_K^{d-2} |w|^2,
  \end{equation*}
  which implies, since $w\in\R^k$ is arbitrary, that $\A_K$ is invertible, and gives us a lower bound on its smallest eigenvalue.
  By the same arguments advocated to prove~\eqref{lobo}, noticing that $\frac{|\s|}{d_{K,\s}} \leq \theta_\D h_K ^{d-2}$ for all $\s \in \E_K$, we infer that
  $$w \cdot \A_K w \leq \lambda_\sharp \alpha_\sharp \theta_\D h_K^{d-2} |w|^2.$$
  We eventually get, using the estimates on the eigenvalues of $\A_K$, that
  \begin{equation} \label{def:boundCond}
    \text{{\rm Cond}}_2(\A_K)\leq \frac{\lambda_\sharp \alpha_\sharp}{\lambda_\flat \alpha_\flat } \theta_\D ^2\ifed C_A,
  \end{equation}
  with $C_A>0$ only depending on $\Lambda$, $\Omega$, $d$, and $\theta_\D$.
  Now, by~\eqref{BK}, since $\A_K$ is symmetric, we have 
  $$w \cdot \B_K w= \sum _{\s \in \E_K}  \sum _{ \s' \in \E_K} |A^{\s\s'}_K| w_\s ^2 = \sum _{\s \in \E_K}  \sum _{ \s' \in \E_K} |A^{\s\s'}_K| w_{\s'} ^2 ,$$
  and we can use the half-sum to get
  $$w \cdot \B_K w= \sum _{\s \in \E_K}  \sum _{ \s' \in \E_K} |A^{\s\s'}_K| \frac{w_\s ^2+ w_{\s'}^2}{2}.$$
  Using Young's inequality, we infer
  \begin{align*}
	w \cdot \A_K w 	& =  \sum _{\s \in \E_K}  \sum _{ \s' \in \E_K} A^{\s\s'}_K w_\s w_{\s'} 
	\leq \sum _{\s \in \E_K}  \sum _{ \s' \in \E_K} |A^{\s\s'}_K| |w_\s| |w_{\s'}| \\
					& \leq \sum _{\s \in \E_K}  \sum _{ \s' \in \E_K} |A^{\s\s'}_K| \frac{w_\s^2 +w_{\s'}^2}{2}	= w \cdot \B_K w  .	
  \end{align*}
  For the second inequality, by symmetry of $\A_K$, we have
  $$w \cdot \B_K w = \sum_{\s \in \E_K} B_K^{\s\s} w_\s^2  \leq  \underset{ \s \in \E_K}{\max} (B_K^{\s\s} ) \sum_{\s \in \E_K}  w_\s^2 =  \underset{ \s \in \E_K}{\max} \left ( \sum_ {\s' \in \E_K}  |A_K^{\s'\s}| \right ) |w|^2 = \|\A_K\|_1 |w|^2.$$
  The space $\R^{k\times k}$ being of finite dimension, the norms $\| \cdot\|_1$ and $\| \cdot\|_2$ are equivalent, and there exists $\gamma_k>0$ such that  $\| \cdot\|_1 \leq \gamma_k \| \cdot\|_2$.
  Moreover, since $\A_K$ is symmetric positive-definite, the following inequality holds:
  $$w \cdot \A_K w \geq  \frac{\|\A_K\|_2}{\text{Cond}_2(\A_K) } |w|^2.$$
  From the previous estimates and \eqref{def:boundCond}, we deduce that
  $$w \cdot \B_K w \leq \gamma_{k} \,\text{Cond}_2(\A_K) \,w \cdot \A_K w \leq \gamma_{k} \,C_A \,w \cdot \A_K w.$$
  But, according to \eqref{def:boundfaces}, we have $\displaystyle\underset{ K \in \M} {\max} \gamma_{k} \leq \underset{(d+1) \leq l \leq d \theta_\D^2} {\max}\gamma_l$, therefore
  $$w \cdot \B_K w \leq C_B \,w \cdot \A_K w,$$
  where $\displaystyle C_B = C_A \underset{(d+1) \leq l \leq d \theta_\D^2} {\max}\gamma_{l}$ is a positive constant only depending on $\Lambda$, $\Omega$, $d$, and $\theta_\D$. This completes the proof of the comparison result~\eqref{CB}.
\end{proof}

\section*{Acknowledgements}         
 { The authors would like to thank the anonymous reviewers for their remarks and suggestions which helped improving the quality of this paper.} The authors acknowledge support from the LabEx CEMPI
(ANR-11-LABX-0007). Claire Chainais-Hillairet also acknowledges support from the ANR MOHYCON
(ANR-17-CE40-0027).

\bibliographystyle{siam} 
\bibliography{HFV_longtime}
\end{document}